\newtheorem{theorem}{Theorem}[section]
\newtheorem{lemma}[theorem]{Lemma}
\newtheorem{corollary}[theorem]{Corollary}
\newtheorem{proposition}[theorem]{Proposition}
\theoremstyle{definition}
\newtheorem{definition}[theorem]{Definition}
\newtheorem{notation}[theorem]{Notation}
\newtheorem{question}[theorem]{Question}
\newtheorem{example}[theorem]{Example}
\theoremstyle{remark}
\newtheorem{remark}[theorem]{Remark}
\newcommand{\longcomment}[1]{}
\DeclareMathOperator{\Div}{Div}
\DeclareMathOperator{\charac}{char}
\DeclareMathOperator{\ord}{ord}
\DeclareMathOperator{\Frob}{Frob}
\DeclareMathOperator{\Aut}{Aut}
\DeclareMathOperator{\Br}{Br}
\newcommand{\dm}[1]{D_{#1P}}
\newcommand{\dn}{\dm{n}}
\newcommand{\dmpr}[1]{D_{#1P'}}
\newcommand{\dnpr}{\dmpr{n}}
\newcommand{\cstE}{{\widetilde{E}}}
\newcommand{\cstEF}{{\widetilde{E}_{F}}}
\newcommand{\cstEFpr}{{\widetilde{E}_{F'}}}
\newcommand{\whichbold}[1]{\mathbf{#1}} % uncomment this line for normal bold
\newcommand{\F}{\whichbold{F}}
\renewcommand{\P}{\whichbold{P}}
\renewcommand{\div}{\mathrm{div}}
\newcommand{\ZZ}{\whichbold{Z}}
\newcommand{\FF}{\whichbold{F}}
\renewcommand{\AA}{\whichbold{A}}
\newcommand{\QQ}{\whichbold{Q}}
\numberwithin{equation}{section}
\author{Bartosz Naskręcki}
\address{Faculty of Mathematics and Computer Science,
	Adam Mickiewicz University in Poznań,
	Uniwersytetu Poznańskiego~4,
	61-614 Poznań, 
	Poland}
\email{bartnas@amu.edu.pl}
\author{Marco Streng}
\address{Mathematisch Instituut,
	Universiteit Leiden,
	P.O.\ box 9512,
	2300 RA Leiden,
	The Netherlands
}
\email{streng@math.leidenuniv.nl}
\subjclass[2010]{11G05, 11B39, 14H52, 11G07, 11B83}
\keywords{elliptic divisibility sequences; elliptic surfaces; primitive divisors; function fields; constant $j$-invariant}
\title[Elliptic divisibility sequences with constant $j$-invariant]{Primitive divisors of elliptic divisibility sequences over
	function fields with constant $j$-invariant}
\begin{document}
	\begin{abstract}
	We prove an optimal Zsigmondy bound for elliptic divisibility sequences
	over function fields in case the $j$-invariant of the elliptic curve
	is constant.
	
	In more detail, given an elliptic curve $E$ with a point $P$
	of infinite order over a global field, the sequence $D_1$, $D_2, \ldots$
	of denominators of
	multiples $P$, $2P,\ldots$ of $P$ is a strong divisibility sequence
	in the sense that $\gcd(D_m, D_n) = D_{\gcd(m,n)}$.
	This is the genus-one analogue of the genus-zero
	Fibonacci, Lucas and Lehmer sequences.
	
	A number $N$ is called a Zsigmondy bound of the sequence
	if each term $D_{n}$ with $n>N$ presents a new prime factor.
	The optimal uniform Zsigmondy bound
	for the genus-zero sequences over $\QQ$ is $30$ by Bilu-Hanrot-Voutier, 2000\nocite{Bilu_Hanrot_Voutier},
	but finding such a bound remains an open problem
	in genus one, both over $\QQ$ and over function fields.
	
	We prove that the optimal Zsigmondy bound for ordinary
	elliptic divisibility sequences over function fields is~$2$
	if the $j$-invariant is constant.
	In the supersingular case, we give a complete classification
	of which terms can and cannot have a new prime factor.
	\end{abstract}
\maketitle

\tableofcontents
\section{Introduction}
An \emph{elliptic divisibility sequence (EDS)} over $\QQ$
is a sequence $D_1$, $D_2$, $D_3,\ldots$
of positive integers defined as follows.
Given an elliptic curve $E$ over $\QQ$ and a point $P\in E(\QQ)$
of infinite order, choose a globally minimal Weierstrass
equation for $E$ and
write for every $Q\in E(\QQ)$:
\begin{equation}\label{eq:classicaleds}
Q = \left( \frac{A_Q}{D_Q^2}, \frac{C_Q}{D_Q^3}\right),
\end{equation}
where the fractions are in lowest terms.
Then set $D_n = \dn$.

A result of Silverman \cite{Silverman_Wieferich} shows that all but finitely many terms $D_n$
have a \emph{primitive divisor}, that is, a prime divisor $p\mid D_n$
such that $p\nmid D_m$ for all $1\leq m < n$.
Equivalently, this says that all but finitely many positive integers $n$
occur as the order of $(P~\mathrm{mod}~p)$ for some prime~$p$.
The question whether there is a uniform bound $N$ such that $D_n$
has a primitive divisor for all pairs $(E,P)$ and all $n>N$
remains open, see \cite{Cheon_Hahn},
\cite{Everest_Mclaren_Ward_EDS}, \cite{Ingram_EDS_over_curves}.

The definition of $D_Q$ of \eqref{eq:classicaleds} is equivalent to
\begin{equation}\label{eq:defedseasy}
v(D_Q) = \mathrm{max}\{ -\frac{1}{2}v(x_v(Q)), 0\}
\end{equation}
for all non-archimedean valuations $v$ and $x_v$ the
$x$-coordinate function for a $v$-minimal Weierstrass equation.
If $E$ and $P$ are defined over a number field~$F$,
then we define the \emph{EDS} of the pair $(E,P)$ to be the sequence
of ideals $D_n=D_{nP}$ of $\mathcal{O}_F$ defined by \eqref{eq:defedseasy}.

Similarly, if $E$ and $P$ are defined over the function field
$F=K(C)$ of a smooth, projective, geometrically irreducible curve $C$
over a field~$K$,
then we define the \emph{EDS} of the pair $(E,P)$ to be the sequence
of divisors $D_n=D_{nP}$ on~$C$ defined by~\eqref{eq:defedseasy}.
See Section~\ref{sec:alternativedef}
for an equivalent definition
in the case of perfect~$K$.
Elliptic divisibility sequences over function fields
are studied in \cite{Everest_Ingram_Mahe_Stevens_Uniform_EDS,
	Silverman_Common_divisors,
	IMSSS,
	Cornelissen_Reynolds}.

From now on, we will speak of \emph{primitive valuations}
instead of primitive divisors, so as not to confuse with the terms
themselves, which are divisors in the function field case.
A positive integer $N$ is a \textit{Zsigmondy bound} of
the sequence $(D_n)_{n}$ if for every $n>N$
the term $D_n$ has a primitive valuation.

Silverman's result and proof are also valid in the number field case \cite{Ingram_Silverman_uniform}.
In the case of function fields of characteristic zero,
the same result is true, as shown by
Ingram, Mah\'e, Silverman, Stange and
Streng~\cite[Theorems 1.7 and~5.5]{IMSSS}.

This was extended to ordinary elliptic curves $E$
over function fields of characteristic $\not=2,3$ by
Naskręcki~\cite{Naskrecki_NewYork}.
Conditionally Naskręcki makes the result uniform in~$E$.
The special case of the results of \cite{Naskrecki_NewYork}
where $j(E)$ is constant gives a Zsigmondy bound $N$ as follows.
\begin{itemize}
	\item For fields $K(C)$ of characteristic $0$ we have $N\leq 72$ (see 
	\cite[p. 437]{Hindry_Silverman} and \cite[Lemma~7.1]{Naskrecki_NewYork}).
	\item For fields $K(C)$ with $p=\charac K(C)\geq 5$ and field of constants $K=\F_{q}$, $q=p^s$ we have
	$N
	<10^{100(15+20g(C))}\cdot p^{84}$ for `tame' elliptic curves
	(cf.~\cite[Definition~8.3]{Naskrecki_NewYork})
	and a bound $N=N(g(C),p,\chi,s)$ for `wild' ordinary elliptic curves where $\chi$
	is the Euler characteristic of the elliptic surface attached to $E$ over $K(C)$.
\end{itemize}

\subsection{Our results}

All previous Zsigmondy bound estimates exclude the case of supersingular curves.
In this paper, we consider the case of function fields $F=K(C)$
and assume $j(E)\in K$,
which includes the case of supersingular~$E$.
In a companion paper we will deal with the case $j(E)\in F\setminus K$,
where we extend
the results of Naskręcki~\cite{Naskrecki_NewYork}
to arbitrary characteristic and improve the bound~$N$.

In the ordinary case, we prove a bound $N=2$
and show that it is optimal.
In the supersingular case in characteristic $p$, we show that the terms
$D_n$ for $n>8p$ have a primitive divisor
if and only if $p\nmid n$,
and we give a sharp version for every characteristic.

In more detail, the main results are as follows. 

\newcommand{\thmconstEcontents}{
	Let $F$ be the function field of a smooth, projective, geometrically irreducible curve over a field $K$
	of characteristic $p\geq 0$.
	Let $E$ be an elliptic curve over $K$ and $P\in E(F)\setminus E(K)$ a point of infinite order.
	Let $n$ be a positive integer,
	\begin{enumerate}
		\item
		if $p\nmid n$ or $E$ is ordinary, then $D_n$ has a primitive valuation,
		\item
		if $p\mid n$ and $E$ is supersingular, then $D_n = p^2D_{n/p}$ has no primitive valuation.
	\end{enumerate}
}
\newcommand{\thmconstjcontents}{
	Let $F$ be the function field of a smooth, projective, geometrically irreducible curve
	over a field~$K$.
	
	Let $E$ be an ordinary elliptic curve over $F$ and let $P\in E(F)$
	be a point of infinite order
	such that $j(E)\in K$, but the pair $(E,P)$ is not constant, cf.\  Definition~\ref{def:const}.
	Then for all integers $n>2$, the term
$D_n$ has a primitive valuation.

Conversely, 
for all ordinary $j$-invariants $j\in K$
there exist an elliptic curve $E/F$ with $j(E)=j$
and a point $P\in E(F)$ of infinite order such that
the terms $D_1$ and $D_2$ do not have a primitive valuation
and there exist an elliptic curve $E/F$ with $j(E)=j$
and a point $P\in E(F)$ of infinite order 
such that all terms $D_n$ for $n\geq 1$
have a primitive valuation.
}

\theoremstyle{plain}
\newtheorem*{theoremconstj}{Theorem A}
\newcommand{\theoremconstjintro}{
	\begin{theoremconstj}[Theorem \ref{thm:constj}]
		\thmconstjcontents
\end{theoremconstj}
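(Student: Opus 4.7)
The plan is to reduce to the constant-curve case (Theorem~\ref{thm:constE}) via a twist. Since $j(E)\in K$ and $E$ is ordinary over $F$, there is a finite Galois extension $F'/F$, corresponding to a cover $\pi:C'\to C$, such that $E\times_F F'\cong (E_0)\times_K F'$ for some elliptic curve $E_0/K$. For $j(E)\notin\{0,1728\}$ this extension is quadratic: $F'=F(\sqrt{d})$ with $d\in F^\times$, and $E$ is the quadratic twist $E_0^{(d)}$. The exceptional $j$-values in small characteristic require higher-degree twists and are handled analogously.

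Under the twisting isomorphism $\psi:E_{F'}\to (E_0)_{F'}$, the point $P$ corresponds to $Q=\psi(P)\in E_0(F')$ on which the generator $\sigma\in\Gal(F'/F)$ acts by $-1$; in particular $x(nQ)\in F$ for every $n$, and $x(nP)=d\cdot x(nQ)$. Writing $D'_n$ for the EDS of $(E_0,Q)$ on $C'$, this identity yields a pointwise comparison: away from $\Supp(\div(d))$ and the ramification locus of $\pi$, primitive valuations of $D'_n$ descend bijectively (modulo the $\sigma$-action) to primitive valuations of $D_n$. Since $(E,P)$ is non-constant, $Q$ defines a non-constant morphism $\phi:C'\to E_0$ satisfying $\phi\circ\sigma=-\phi$, and Theorem~\ref{thm:constE} applies: every $D'_n$ admits primitive valuations, i.e., closed points $v'\in C'$ with $\phi(v')$ of exact order $n$ in $E_0$.

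For the forward direction, it remains to show that for $n>2$ at least one primitive $v'$ has image $\pi(v')\in C$ outside $\Supp(\div(d))$. Since $\Supp(\div(d))\subset C$ is a fixed finite set while the primitive part of $D'_n$ has degree essentially $\deg(\phi)\cdot|E_0[n]^{\times}|$ (after the usual inclusion--exclusion over divisors $m\mid n$, $m<n$), a height/counting argument succeeds as soon as $n\geq 3$. For $n=1,2$ the obstruction $\Supp(\div(d))$ can absorb the entire primitive part of $D'_n$, which is exactly where the counterexamples will live.

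For the converse, to produce $(E,P)$ with $D_1$ and $D_2$ carrying no primitive valuations, I would start from $E_0/K$ with $j(E_0)=j$ and appropriate $2$-torsion, choose a non-constant $Q\in E_0(F')$ with $\sigma(Q)=-Q$, and a twist datum $d$ arranged so that $\phi^{-1}(E_0[2])\subset\pi^{-1}(\Supp(\div(d)))$; the twist formula then forces both $D_1$ and $D_2$ to be absorbed into $\div(d)$ on $C$. The complementary example where every $D_n$ has a primitive valuation is immediate from Theorem~\ref{thm:constE} by taking $E=E_0$ (no twist) and any non-constant $P\in E_0(F)$ of infinite order. The main obstacle will be the precise bookkeeping of the twist at valuations in $\Supp(\div(d))$, where the ramification of $\pi$, the pole order of $d$, and the local behaviour of $nQ$ all interact; this analysis must be tight enough both to secure primitivity for $n\geq 3$ in all cases (including the non-standard twists arising at $j\in\{0,1728\}$ in small characteristic) and to construct the sharp examples at $n=1,2$.
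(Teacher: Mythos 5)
There is a genuine gap at the heart of the forward direction. Your reduction to a constant curve via a twist is essentially the paper's starting point (Lemma~\ref{lem:Fprime} and Notation~\ref{not:Fprime}), but your claim that ``a height/counting argument succeeds as soon as $n\geq 3$'' does not hold. The candidate primitive valuations of $D'_n$ are the points of $C'$ mapping to points of exact order $n$ on $\cstE$; for $n=3$ or $4$ there are only finitely many such torsion points (at most $8$ resp.\ $12$), while $\Supp(\div(d))$ --- equivalently the set of places of additive reduction --- can be arbitrarily large, since $d$ is an arbitrary twisting datum. So nothing in a counting argument prevents \emph{all} candidate primitive valuations of $D'_3$ or $D'_4$ from lying over bad places, and the uniform bound $N=2$ cannot be reached this way. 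What replaces counting in the paper is local structure: if a primitive valuation of $D'_n$ lies over a place $v$ of additive reduction, then $n$ divides the order $d_v$ of $P$ in the component group $E(F_v)/E_0(F_v)$ (Lemma~\ref{lem:additivemv}, Proposition~\ref{prop:hasvaluation}); one has $d_v\leq 4$ in general, and then $d_v\neq 4$ when $p\neq 2$ and $d_v\neq 3$ when $j\neq 0$, $p\neq 3$ (Lemma~\ref{lem:additive}), with the remaining cases handled by the characteristic-specific Propositions~\ref{prop:char24notinS} and~\ref{prop:char33notinS} (via Szydlo's reduction-type tables over possibly imperfect residue fields) and by the separate elementary triplication-formula argument for $j=0$ (Proposition~\ref{prop:D_3P_is_primitive_for_j_0}). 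None of this appears in your proposal, and it is exactly this material that turns the easy bound $N=4$ (Theorem~\ref{thm:constjweak}) into the sharp $N=2$. Relatedly, ``handled analogously'' for $j\in\{0,1728\}$ and characteristics $2,3$ hides real content: in characteristic $2$ the quadratic twist is of Artin--Schreier type, so your identity $x(nP)=d\cdot x(nQ)$ fails there, and the descent to $F'$ over imperfect constant fields requires care (cf.\ Example~\ref{ex:nonsmooth}).

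The converse is also only a plan, and it misses a required ingredient: the theorem demands examples over \emph{every} function field $F=K(C)$ and every ordinary $j\in K$, including finite $K$ where your freedom to choose $Q$ and $d$ is limited. The paper constructs explicit families over $K(t)$ in each characteristic (Lemma~\ref{ex:ordsharp1}, Examples~\ref{ex:char_3_j_ord} and~\ref{ex:char_2_j_not_0}) and then transfers them to arbitrary $C$ by pulling back along a morphism $C\to\P^1$ unramified above the bad places (Theorem~\ref{thm:smallexamplesuffices}, resting on Proposition~\ref{prop:transferexample} and Katz's wild Belyi theorem); your sketch of arranging $\phi^{-1}(\cstE[2])\subset\pi^{-1}(\Supp(\div(d)))$ would at best give examples over particular fields and leaves the uniformity in $F$ unaddressed.
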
}

\newtheorem*{theoremsupersingularsharp}{Theorem B}
\newcommand{\theoremsupersingularsharpintro}{
		\begin{theoremsupersingularsharp}[Theorem \ref{thm:supersingularsharp}]
			\theoremsupersingularsharpcontents{\ref{tab:table1}}
	\end{theoremsupersingularsharp}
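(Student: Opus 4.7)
My plan is to reduce the supersingular constant-$j$ case to the constant-curve case treated earlier (where $E/K$ is defined over the field of constants and $P\in E(F)\setminus E(K)$) by means of a suitable twist, transfer the resulting information on $\dnpr$ back to $\dn$, and then assemble the exceptional small-$n$ rows of Table~\ref{tab:table1} by explicit local computation. Since $j(E)\in K$ and $E$ is supersingular, there is a supersingular elliptic curve $\cstE/K$ with $j(\cstE)=j(E)$ and a finite separable extension $F'/F$ over which $E$ becomes isomorphic to $\cstEFpr$. The degree of this twist is $2$ generically, $4$ when $j(E)=1728$ (possible only if $p=2$), and $6$ when $j(E)=0$ (possible only if $p\in\{2,3\}$). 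Writing $\pi\colon C'\to C$ for the corresponding cover, the image $P'\in\cstE(F')$ of $P$ satisfies the hypotheses of the constant-curve theorem, so $\dnpr$ has a primitive valuation whenever $p\nmid n$, while $\dnpr=p^2\dmpr{n/p}$ has none whenever $p\mid n$.

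The second step tracks primitivity through $\pi$. Away from the branch locus of $\pi$ we have $\pi^*\dn=\dnpr$, so every primitive valuation of $\dnpr$ lying above an unramified place descends to a primitive valuation of $\dn$. The branch locus is finite and supported on the places where $E$ is not isomorphic to $\cstE$ over the local ring; at each such place I would compute $v(\dn)$ directly from \eqref{eq:defedseasy} using a $v$-minimal Weierstrass model of $E$ and compare with $w(\dnpr)$ for $w\mid v$. For $p\geq 5$ the twist is tame quadratic, the comparison is immediate, and one obtains that $\dn$ has a primitive valuation for every $n$ with $p\nmid n$ above the threshold dictated by the finite branch locus. For $p\in\{2,3\}$ the quartic or sextic twists are wildly ramified and further care is required.

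The third step fills in the small-$n$ rows of Table~\ref{tab:table1}. In each residue characteristic $p$ and twist class, only finitely many $n$ can fail to have a primitive valuation for a reason other than the universal obstruction $p\mid n$: these are precisely those $n$ for which every primitive valuation of $\dnpr$ lands in the branch locus of $\pi$ and is already absorbed into $\dm{m}$ for some proper divisor $m$ of $n$. An effective upper bound on such $n$ in terms of $\deg\pi$ and the local branch data makes the enumeration finite. For each entry of the table I would then exhibit, first, a concrete pair $(E,P)$ realising the listed failure (by starting from a constant-curve example from the preceding theorem and twisting by a suitable function on $C$), and, second, a companion pair showing that no additional row is missing, so establishing sharpness.

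The main obstacle will be the wild ramification of the twist at $p\in\{2,3\}$. Here the clean pullback identity $\pi^*\dn=\dnpr$ acquires corrections coming from the wild different, the $v$-minimal Weierstrass model of $E$ at a ramified place genuinely differs from that of $\cstE$, and the multiplicities $v(\dn)$ must be extracted place by place from the definition~\eqref{eq:defedseasy} rather than read off from $\dnpr$. This local analysis is what produces the exceptional rows of Table~\ref{tab:table1} at small $n$ and makes Theorem~B strictly more delicate than the ordinary-case Theorem~A.
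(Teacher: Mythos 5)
Your high-level architecture (pass to a constant curve $\cstE$ over $K$, compare $(\dn)_n$ with $(\dnpr)_n$, then treat the exceptional places and small $n$) matches the paper's, but the proposal has a genuine gap at exactly the point where the table is produced. The paper's sharp entries do not come from ``an effective upper bound in terms of $\deg\pi$ and the local branch data''; they come from a rank-of-apparition analysis at the places where the local comparison fails. The correct dichotomy is not ramified versus unramified in the cover $C'\to C$ but good versus additive reduction of $E$ (Lemma~\ref{lem:constj} versus Lemma~\ref{lem:additivemv}): at a good place the ranks of apparition of $v$ and $v'$ agree, while at an additive place one has $m(P',v')\mid d_v$ and $m(P,v)\in\{d_v,d_vp\}$, where $d_v$ is the order of $P$ in the component group $E(F_v)/E_0(F_v)$. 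The whole table then rests on bounding $d_v$: $d_v\le 4$ always, $d_v\ne 4$ if $p\ne 2$, and the characteristic-$2$ and $-3$ exclusions of orders $4$ and $3$ when $j\ne 0$ (Propositions~\ref{prop:char24notinS} and~\ref{prop:char33notinS}), plus the separate $j=0$ analysis of $D_{3P}$ and $D_{3pP}$ (Proposition~\ref{prop:D_3P_is_primitive_for_j_0}, Lemma~\ref{lem:D_3pP_is_primitive_for_j_0}). Your sketch contains no substitute for this mechanism, so it cannot justify the precise thresholds ($n>4$ with $p\nmid n$ in characteristic $2$, $n>3$ otherwise; $np$ with $n>4$, resp.\ $n>3$, never primitive) nor the individual yes/no entries at $n=3,3p$.

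Two further problems. First, a factual error undermines the case distinction of the table: you assert $j=0$ is possible only for $p\in\{2,3\}$ and $j=1728$ only for $p=2$, but a supersingular curve has $j=0$ whenever $p\equiv 2\ \mathrm{mod}\ 3$ (and $j=1728$ whenever $p\equiv 3\ \mathrm{mod}\ 4$); the difference between the rows $p\equiv 1$ and $p\equiv 2\ \mathrm{mod}\ 3$ at $n=3p$ exists precisely because $j=0$ is supersingular in the latter case, so your twist-degree bookkeeping would not even reproduce the table's shape (and the claimed separability of $F'/F$ is neither needed nor true in general). Second, the $\bothmark$ entries require, for \emph{every} function field of the given characteristic, explicit pairs $(E,P)$ realising both behaviours; this needs concrete constructions with prescribed additive fibres (e.g.\ $I^*_{2m+1}$ in characteristic $2$ to force $d_v=4$ and $m(P,v)=8$, type $IV^*$ in characteristic $3$ for $D_9$) together with a transfer statement such as Theorem~\ref{thm:smallexamplesuffices}, proved via base changes unramified over the bad places (Proposition~\ref{prop:transferexample} and the wild Belyi theorem). ``Twisting a constant-curve example by a suitable function on $C$'' is a plausible starting point but is left entirely unverified, and it is exactly these verifications that constitute the proof of sharpness.
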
	}

\newtheorem*{theoremconstE}{Theorem C}
\newcommand{\theoremconstEintro}{
	\begin{theoremconstE}[Theorem \ref{thm:constE}]
		\thmconstEcontents
\end{theoremconstE}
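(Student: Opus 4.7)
The plan is to translate everything into the geometry of the morphism $C\to E$ attached to $P$. Because $E$ is defined over $K$, by the valuative criterion of properness $P\in E(F)$ corresponds to a $K$-morphism $\varphi\colon C\to E$, and the hypothesis $P\in E(F)\setminus E(K)$ says $\varphi$ is non-constant, hence surjective on $\overline{K}$-points. Setting $\varphi_n\coloneqq[n]\circ\varphi$ and unwinding definition~\eqref{eq:defedseasy}---using only that the $x$-coordinate function has a double pole concentrated at $O_E$---I would identify the denominator divisor with a pullback:
\[ D_n \;=\; \varphi_n^{*}(O_E) \;=\; \varphi^{*}\bigl([n]^{*}(O_E)\bigr). \]

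A closed point $v$ of $C$, viewed as a Galois orbit of a $\overline{K}$-point $c$, then divides $D_n$ precisely when $\varphi(c)\in E[n](\overline{K})$, and is primitive for $D_n$ precisely when $\varphi(c)$ has exact order~$n$ in $E(\overline{K})$. By surjectivity of $\varphi$, the existence of a primitive valuation of $D_n$ is therefore equivalent to the existence of an element of exact order~$n$ in the abstract group $E(\overline{K})$. When $p\nmid n$ we have $E[n](\overline{K})\cong(\ZZ/n\ZZ)^2$, and when $E$ is ordinary with $n=p^a m$, $p\nmid m$, we have $E[n](\overline{K})\cong\ZZ/n\ZZ\times\ZZ/m\ZZ$; both groups contain an element of exact order~$n$, so part~(1) follows.

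For part~(2), assume $E$ is supersingular and $p\mid n$. Then $E[p]$ is a local group scheme of length $p^2$ supported set-theoretically at $O_E$, which gives $[p]^{*}(O_E)=p^2\,[O_E]$ as a divisor on~$E$. Factoring $[n]=[p]\circ[n/p]$ and pulling back via $\varphi$ yields $D_n=p^2 D_{n/p}$, so $\Supp(D_n)=\Supp(D_{n/p})$ and no valuation of $D_n$ is primitive. The only steps that are not immediate---and which I expect to be the main technical check---are the identification $D_n=\varphi_n^{*}(O_E)$ and the supersingular multiplicity $[p]^{*}(O_E)=p^2\,[O_E]$; both are standard (cf.\ Silverman, \emph{The Arithmetic of Elliptic Curves}, Ch.~III), so the whole argument is essentially a clean reformulation on the associated morphism $C\to E$.
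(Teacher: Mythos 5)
Your proposal is correct and follows essentially the same route as the paper: both interpret $P$ as a dominant $K$-morphism $C\to E$, write $D_n = P^{*}[n]^{*}(O)$, obtain a primitive valuation for part (1) from a point of exact order $n$ in $E(\overline{K})$ pulled back through the surjective morphism, and deduce part (2) from the purely inseparable multiplication-by-$p$ on a supersingular curve, i.e.\ $[p]^{*}(O)=p^{2}(O)$. The only cosmetic difference is that the paper records the general formula $[n]^{*}(O)=\deg_i([n])\sum_{Q\in E(\overline{K})[n]}(Q)$ once and specializes it, whereas you treat the two cases separately; the content is the same.
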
}

\theoremconstjintro

\newcommand{\yesmark}{yes}
\newcommand{\nomark}{no}
\newcommand{\bothmark}{*}
\newcommand{\yesentry}[1]{\yesmark}
\newcommand{\noentry}[1]{\nomark}
\newcommand{\bothentry}[2]{\bothmark}

\newcommand{\theoremsupersingularsharpcontents}[1]{
	Let $F$ be the function field of a smooth, projective, geometrically irreducible curve over a field
	of characteristic $p>0$.
	Let $n$ be a positive integer.
	
	If the entry corresponding to $n$ and $p$ in Table~#1 is
	`$\yesmark$' (respectively `$\nomark$'), then for every supersingular
	elliptic curve $E$ over~$F$, and every $P\in E(F)$ with 
	$(E,P)$ non-constant and $P$ of infinite order, the term $D_{n}$ has
	a (respectively no) primitive valuation.
	
	If the entry is `$\bothmark$', then there exist $E$ and $P$ as in the previous
	paragraph such that $D_{n}$ has a primitive valuation and there exist $E$ and $P$
	such that $D_{n}$ has no primitive valuation.}

\theoremsupersingularsharpintro

\newcommand{\maintablecontents}{
	\begin{align*}
	&p=2\\ &
 \begin{array}{|c|c|c|c|c|c|c|c|c|c|c|}
	\hline
	n & 1 & 2 (=p) & 3 & 4 (=2p) & 6 (=3p) & 8 (=4p) & \substack{\text{odd}\\n>4} & \substack{\text{even}\\n>8}\\
	\hline
	D_{n} & \bothentry{B}{\ref{it:char2j08}} &
	 \bothentry{\ref{it:char2_j0_2and4}}{B\ref{it:char2j08}} &
	  \bothentry{B}{\ref{it:char2j08}} &
	   \bothentry{\ref{it:char2_j0_2and4}}{B\ref{it:char2j08}} &
	    \bothentry{\ref{it:char2j08}}{B} &
	     \bothentry{\ref{it:char2_j0_2and4}\ref{it:char2j08}}{B}
	& \yesentry{A} & \noentry{A}\\
	\hline
    \end{array}	\\
	&p=3\\\ & \begin{array}{|c|c|c|c|c|c|c|c|c|c|c|}
\hline
n & 1 & 2 & 3 (=p) & 6 (=2p)& 9 (=3p) & \substack{n>3\\ 3\nmid n} & \substack{n>9\\ 3\mid n}\\
\hline
D_{n} & \bothentry{B}{\ref{it:ordsharp1}\ref{it:char33}} &
 \bothentry{B}{\ref{it:ordsharp1}\ref{it:char33}} &
 \bothentry{\ref{it:ordsharp1}}{B\ref{it:char33}} &
 \bothentry{\ref{it:ordsharp1}\ref{it:char33}}{B} &
 \bothentry{\ref{it:char33}}{B} & 
 \yesentry{A} &
 \noentry{A}\\
\hline
\end{array}	\\
	&p\equiv 1\ \mathrm{mod}\ 3\\ & \begin{array}{|c|c|c|c|c|c|c|c|c|c|c|}
\hline
n & 1 & 2 & 3 & p & 2p & 3p & \substack{n>3\\ p\nmid n} & \substack{n>3p\\ p\mid n} \\
\hline
D_{n} & \bothentry{B}{\ref{it:ordsharp1}} & 
\bothentry{B}{\ref{it:ordsharp1}} & 
\yesentry{CD} &
\bothentry{\ref{it:ordsharp1}}{B} &
 \bothentry{\ref{it:ordsharp1}}{B} &
  \noentry{D} &
   \yesentry{A} &
    \noentry{A}\\
\hline
\end{array}	\\
	&p\equiv 2\ \mathrm{mod}\ 3,\quad p\not=2\\
	 & \begin{array}{|c|c|c|c|c|c|c|c|c|c|c|}
\hline
n & 1 & 2 & 3 & p & 2p & 3p & \substack{n>3\\ p\nmid n} & \substack{n>3p\\ p\mid n}\\
\hline
D_{n} & \bothentry{B}{\ref{it:ordsharp1}} &
 \bothentry{B}{\ref{it:ordsharp1}} &
  \yesentry{C} &
   \bothentry{\ref{it:ordsharp1}}{B} &
    \bothentry{\ref{it:ordsharp1}}{B} &
     \bothentry{\ref{it:D3pP}}{B} &
      \yesentry{A} &
       \noentry{A}\\
\hline
\end{array}	
    \end{align*}
}
\begin{table}[htb]
	\maintablecontents
    \label{tab:table1}
    \caption{Table mentioned in Theorem~\ref{thm:supersingularsharp}}
\end{table}

In the case where $E$ itself is defined over $K$
(and not just its $j$-invariant),
the result is much stronger, as follows.

\theoremconstEintro

\subsection{Alternative definition}
\label{sec:alternativedef}

We now give a more standard, but more technical, definition
of elliptic divisibility sequences over function fields
in the case of perfect base fields~$K$.
It is proven in \cite[Lemma~5.2]{IMSSS}
that this defines the same sequence $(\dn)_n$
in the case of number fields~$K$;
and the proof at loc.~cit.~extends to perfect fields~$K$.

Let $E$ be an elliptic curve over the function field $K(C)$ of
a smooth, projective, geometrically irreducible curve
$C$ over a perfect field~$K$.
Let $S$ be the Kodaira--N\'{e}ron model of $E$,
i.e., a smooth, projective surface with a relatively
minimal elliptic fibration $\pi:S\rightarrow C$ with
generic fibre $E$ and a section $O:C\rightarrow S$,
cf.\ \cite[\S 1]{Shioda_Mordell_Weil},
\cite[III, \S 3]{Silverman_book2}.
For example, if the curve $E$ is constant (that is, defined over~$K$),
then we can take $S=E\times C$
with the natural projection $\pi:E\times C\rightarrow C$.

	Let $P$ be a point of infinite order in the Mordell--Weil group $E(K(C))$. We define a family of effective divisors $D_{nP}\in\Div(C)$ parametrised by natural numbers $n$.
	For each $n\in\mathbf{N}$ the divisor $D_{nP}$ is the pull-back of
	the image $\overline{O}$ of the section $O$ through the morphism
	$\sigma_{nP}:C\rightarrow S$ induced by the point~$nP$, that is,
	\[D_{nP}=\sigma_{nP}^{*}(\overline{O}).\]

The delicate issues with non-perfect coefficient fields~$K$
are discussed in detail in Section~\ref{sec:additional_examples}
and Example~\ref{ex:inseparable_example}.

\subsection{Known results about divisibility sequences over function fields}

Elliptic divisibility sequences over function fields
$F=K(C)$ and related sequences were discussed in several places.
We collect some known results here.

First of all, they satisfy the \emph{strong divisibility property}
\begin{equation}\label{eq:strongdivproperty}
\gcd(B_m,B_n) = B_{\gcd(m,n)}
\end{equation}
for all positive integers $m, n$, where
$\gcd(B_m,B_n):=\sum_{v} \min\{v(B_m), v(B_n)\}[v]$.
Indeed, 
the proof in e.g.~\cite[Lemma~3.3]{Streng_Divisibility_CM} carries over.

Theorem~1.5 of \cite{IMSSS}
shows that in case $E/K$ with $K$ a number field
(and again $P\in E(F)\setminus E(K)$)
the set of prime numbers $n$ such that $\dn-\dm{1}$
is irreducible has positive lower Dirichlet density.

Cornelissen and Reynolds~\cite{Cornelissen_Reynolds}
study perfect power terms
in the case $j(E)\in F\setminus F^p$ for global function fields~$F$ of characteristic $p\geq 5$.
Everest, Ingram, Mah\'e, and Stevens study primality of terms
of elliptic divisibility sequences for $K(C)=\QQ(t)$
in the context of magnified sequences,
see \cite[Theorem~1.5]{Everest_Ingram_Mahe_Stevens_Uniform_EDS}.
Silverman~\cite{Silverman_Common_divisors} and Ghioca-Hsia-Tucker~\cite{Ghioca_Hsia_Tucker} study the
common subdivisor for two simultaneous divisibility
sequences on elliptic curves over $K(t)$, where $K$
is a field of characteristic~0. 

In a broad context, Flatters and Ward~\cite{Flatters_Ward} prove an
analogue of Theorem~\ref{thm:constj} for divisibility sequences of Lucas type for polynomials and
Akbar-Yazdani~\cite{Akbary_Yazdani} study the greatest degree of
the prime factors of certain Lucas polynomial divisibility sequences.

Hone and Swart~\cite{Hone_Swart} study examples of Somos 4
sequences over $K(t)$, which are constructed from specific elliptic divisibility sequences.
They construct a certain elliptic surface and show that
the corresponding sequence is a sequence of polynomials.

\subsection{Overview and main ideas of the proof}

The main idea behind the proof is to reduce to the case
where $E$ is defined over the base field $K$ of $F=K(C)$.
In that case $P\in E(F)$ can
be viewed as a dominant morphism $C\rightarrow E$ over~$K$.
The primitive valuations of $D_n$ then are exactly
the pull-backs of points of order $n$ on~$E$, 
which gives Theorem~C.
For details, see Section~\ref{sec:constant_curves}.

For elliptic curves over $F$ where only the $j$-invariant
is in~$K$, we find an elliptic curve $\cstE$ over $K$
with the same $j$-invariant and an isomorphism
$\phi:E\rightarrow \cstE$ over $\overline{F}$.
Then Theorem~\ref{thm:constE} applies to the sequence
$(\dnpr)_n$ obtained from $(\cstE, \phi(P))$.
See Section~\ref{sec:relating_constant_E_to_constant_j_globally}.

At that point, we know exactly which terms of $(\dnpr)_n$
have primitive valuations, and the goal
is to conclude which terms of $(\dn)_n$ have primitive valuations.

For this, we look at the
\emph{rank of apparition} $m(v)$ of a valuation $v$ of $F$
in the sequence $(D_{nP})_n$, which
is the positive integer
\[m(v)=m(P, v) :=\min\{n\geq 1: \ord_{v}(D_{nP})\geq 1 \},\]
or $\infty$ if the set is empty.
A valuation $v$ is primitive in the term $D_{nP}$ if and only if $n=m(v)$. 

The key to our proof is to see how much the rank
of apparition $m(v)$ of a valuation $v$ of $\overline{F}$ can vary
between the sequences $(\dn)_n$ and $(\dnpr)_n$.
Section~\ref{sec:relating_constant_E_to_constant_j_locally} shows that
this does not vary much, and bounds the variation
in terms of the component group of the special fibre of the N\'eron model.

This is already enough to get a weaker version
of the main results, which is not sharp, but is already
uniform (Theorems \ref{thm:constjweak} and~\ref{thm:supersingular}).

In Section \ref{sec:component_groups} we prove two auxiliary
results about the order of a point $P$ in the component group at~$v$.
This is needed in the proof of the main theorems to obtain a sharp result. 

In Section \ref{sec:the_third_term_when_j_0} we show that the
term $D_{3P}$ for sequences in characteristic $\neq 2,3$ always
has a primitive valuation if $j(E)=0$. 
This is also needed in order to obtain a sharp result.

Section \ref{sec:additional_examples} contains examples
which we use to show that our main theorems are optimal,
that is, to prove the converse statement in
Theorem~A
and the $\bothmark$-entries
in Theorem~B.

Finally, in Section \ref{sec:proof_of_the_main_theorem} we
combine all of the above into a proof of Theorems A and~B.

\subsection*{Acknowledgements}

The authors would like to thank Peter Bruin and Hendrik Lenstra
for helpful discussions and the
anonymous referee for comments that improved the exposition.

\section{Constant curves}\label{sec:constant_curves}

Let $C$ be a smooth, projective, geometrically irreducible curve
over a field $K$ and let $F = K(C)$ be its function field.
Let $E/F$ be an elliptic curve and $P\in E(F)$ a point.
For a field extension $M\supset L$ and an elliptic curve $E'$ over~$L$,
let $E_M'$ be the base change of $E'$ to~$M$.
\begin{definition}\label{def:const}
	We say that $E$ is \emph{constant} if there exists an elliptic curve $\cstE/K$ and an isomorphism $\phi:E\rightarrow \cstEF$ defined over $F$.
	
	We say that the pair $(E,P)$ is \emph{constant} if there exist such $\cstE$
	and $\phi$ that also satisfy $\phi(P)\in \cstE(K)$.
	
	We say that the $j$-invariant $j(E)$ of the curve $E/K(C)$ is \emph{constant} if $j(E)\in K$.
\end{definition}

\begin{lemma}\label{lem:constconst}
	The pair $(E,P)$ is \emph{constant} if and only if $E$ is constant and
	for \emph{all} elliptic curves $\cstE/K$
	and isomorphisms $\phi : E\rightarrow \cstEF$ we have $\phi(P)\in \cstE(K)$.
\end{lemma}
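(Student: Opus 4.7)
For the ``if'' direction: if $E$ is constant there exists some pair $(\cstE,\phi)$, and the hypothesis forces $\phi(P)\in\cstE(K)$, so $(E,P)$ is constant by definition. The content is in the ``only if'' direction, so assume $(E,P)$ is constant via some pair $(\cstE_0,\phi_0)$ with $\phi_0(P)\in\cstE_0(K)$; in particular $E$ is constant. Let $(\cstE_1,\phi_1)$ be an arbitrary pair exhibiting $E$ as constant; the goal is to show $\phi_1(P)\in\cstE_1(K)$.

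The plan is to consider $\psi=\phi_1\circ\phi_0^{-1}\colon \cstE_{0,F}\to\cstE_{1,F}$, an $F$-isomorphism between elliptic curves defined over $K$, and to reduce the whole question to showing that $\psi$ descends to an isomorphism $\cstE_0\to\cstE_1$ defined over $K$. Once that is established, such a $K$-isomorphism sends $K$-points to $K$-points, so $\phi_1(P)=\psi(\phi_0(P))$ lies in $\cstE_1(K)$, which finishes the proof.

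To prove that $\psi$ descends to $K$, I would use that the functor of origin-preserving isomorphisms from $\cstE_0$ to $\cstE_1$ is represented by a finite $K$-scheme $\mathrm{Isom}_K(\cstE_0,\cstE_1)$, namely a torsor under the finite automorphism group scheme of $\cstE_0$. The isomorphism $\psi$ corresponds to an $F$-point of this scheme, and since the scheme is proper (being finite) over $K$, the valuative criterion extends this $F$-point to a morphism $C\to\mathrm{Isom}_K(\cstE_0,\cstE_1)$. Since $\mathrm{Isom}_K(\cstE_0,\cstE_1)$ is affine and $C$ is projective and geometrically irreducible, we have $\Gamma(C,\mathcal{O}_C)=K$ (equivalently, $K$ is algebraically closed in $F$), so the morphism must factor through a single $K$-point of $\mathrm{Isom}_K(\cstE_0,\cstE_1)$. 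Hence $\psi$ is defined over $K$.

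The main potential obstacle is the non-reducedness of the automorphism group scheme when $E$ is supersingular in characteristic $2$ or $3$, but the argument only relies on finiteness rather than \'etaleness, so it goes through unchanged. As an alternative that avoids group schemes, one can describe $\psi$ by Weierstrass transformation data $(u,r,s,t)\in F^{\times}\times F^{3}$; the formulas relating the Weierstrass coefficients of $\cstE_0$ and $\cstE_1$ force each of $u,r,s,t$ to satisfy a polynomial relation over $K$, hence to lie in $\overline{K}\cap F=K$.
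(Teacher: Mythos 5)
Your proposal is correct, but it takes a different route from the paper. You prove the stronger statement that the comparison isomorphism $\psi=\phi_1\circ\phi_0^{-1}$ between the two constant models actually descends to $K$ (via the finite $K$-scheme $\mathrm{Isom}_K(\cstE_0,\cstE_1)$, or via Weierstrass transformation data), and then transport the $K$-point $\phi_0(P)$ through the resulting $K$-isomorphism. The paper never descends the isomorphism: it base-changes to $F'=\overline{K}F$, writes $\phi'=\eta\circ\psi_{F'}\circ\phi_{F'}$ with $\psi$ and $\eta$ defined over $\overline{K}$, and concludes that $\phi'(P)$ lies in $\cstE'(\overline{K})\cap\cstE'(F)=\cstE'(K)$; so it applies the key input $\overline{K}\cap F=K$ (geometric irreducibility of $C$) to the coordinates of the point, whereas you apply it to the coordinates of the isomorphism. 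Your version buys a slightly stronger conclusion (the two constant models are $K$-isomorphic, not merely $\overline{K}$-isomorphic) at the cost of invoking representability and finiteness of the Isom scheme; the paper's argument needs only the finiteness of $\Aut(\cstE'_{\overline{K}})$ and elementary field theory. Two small streamlinings of your write-up: the valuative-criterion step is superfluous, since an $F$-point of a finite $K$-scheme is a $K$-algebra map $A\to F$ whose image consists of elements algebraic over $K$, hence already lies in $\overline{K}\cap F=K$; and in the Weierstrass-data alternative the claim that $u,r,s,t$ satisfy polynomial relations over $K$ deserves a word of justification in characteristics $2$ and $3$ (e.g.\ $u^{12}\Delta_1=\Delta_0$ handles $u$, and the $b_i$-transformation identities, or the zero-dimensionality of the solution scheme, handle $r,s,t$). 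Your remark about non-reducedness is moot (the automorphism group scheme of an elliptic curve is in fact \'etale), but as you say, only finiteness is used.
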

\begin{proof}
	The `if' implication follows from the definition, so it is enough to prove the `only if' implication.
	Suppose that $(E,P)$ is constant.
	There exists an elliptic curve $\widetilde{E}$ defined over $K$ and
	an isomorphism $\phi:E\rightarrow \widetilde{E}_{F}$ of $F$-curves
	such that $\phi(P)\in\widetilde{E}(K)$.
	Let $\cstE'$ be an elliptic curve over $K$ and
	$\phi':E\rightarrow \cstE'_F$ another $F$-isomorphism.
	Let $\phi\circ\phi'^{-1}:\cstE'_{F}\rightarrow \cstE_{F}$ denote the corresponding isomorphism of $F$-curves. 
	It follows that the curves $\cstE'_{F}$ and $\cstE_{F}$ have equal $j$-invariant and since $\cstE$ and $\cstE'$ are defined over $K$ there exists a $\overline{K}$-isomorphism $\psi:\cstE_{\overline{K}}\rightarrow \cstE'_{\overline{K}}$. 
	Let $F'$ denote the function field of the curve $C_{\overline{K}}$. We have
	\[
	\eta\circ\psi^{\phantom{\prime}}_{F'}\circ \phi^{\phantom{\prime}}_{F'}
	=\phi'_{F'}
	\]
	for some $\eta\in \Aut(\cstE'_{F'})=\Aut(\cstE'_{\overline{K}})$.
	Since $P\in E(F)$, we have $\phi'(P)\in \cstE'(F)$.
	From our assumption it follows that $\phi(P)\in \cstE(K)$ and
	hence $\phi'(P) =
	(\eta\circ\psi_{F'}\circ \phi_{F'})(P)\in \cstE'(\overline{K})$.
	Combining these statements, we get $\phi(P)\in \cstE'(\overline{K}\cap F)$.
	As $C$ is smooth and geometrically irreducible, it is geometrically
	integral, hence by \cite[Corollary 3.2.14(c)]{Liu},
	we get $\overline{K}\cap F = K$. 
\end{proof}

\subsection{\texorpdfstring{Constant $E$}{Constant E}}

Suppose that $E$ is constant. Then without loss of generality we consider $E=\cstEF$.
Then $P\in E(F)$ can be interpreted as a morphism of curves $P : C\rightarrow \cstE$ defined over~$K$
as follows. We give two interpretations, both leading to the same morphism.

Consider the constant elliptic surface $(S,\pi, C)$ where $S = \cstE\times C$ and $\pi: S\rightarrow C$ is the projection on the second factor.
Every point $P$ on the generic fibre $E$ corresponds to a unique section
$\sigma_{P}:C\rightarrow S$.
Composition $\mu\circ \sigma_{P}:C\rightarrow \cstE$ of $\sigma_{P}$ with
the projection $\mu:S\rightarrow \cstE$ on the first factor
is a morphism defined over~$K$.
By abuse of notation we denote the morphism $\mu\circ\sigma_{P}$ by~$P$.

Equivalently, the point $P\in E(F)$ has coordinates in $F=K(C)$,
hence defines a rational map from $C$ to~$E$.
All such rational maps are morphisms as $C$ is smooth and $E$ is projective.

Applying this abuse of notation to $nP$ too, we get  
$nP = [n]\circ P$.

Note that the pair $(E,P)$ is constant if and only if the morphism $P : C\rightarrow \cstE$
is a constant morphism, or equivalently, maps to a single point.

\subsubsection{Constant $P$}\label{sec:constP}
If $P$ maps to a single point, then so does $[n]\circ P$. In particular, for all $n$
either $nP=O$ or the images of $[n]\circ P$ and $O$ are disjoint.
If $P$ has infinite order, then this gives for all $n$:
\begin{equation}\label{eq:constzero}
\dn=0.
\end{equation}

\subsubsection{Non-constant $P$}
Let us assume in this section that $E$ is constant and
the morphism $P:C\rightarrow\cstE$ is non-constant.
\begin{theorem}\label{thm:constE}
	\thmconstEcontents
	\end{theorem}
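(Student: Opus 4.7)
The plan is to use the interpretation of $P$ as a non-constant morphism of $K$-curves $P:C\to E$ developed just above, which identifies $nP$ with $[n]\circ P$ and yields $D_n = P^*\bigl([n]^*O_E\bigr)$ on $C$, where $O_E$ denotes the identity divisor on $E$. Because $C$ is smooth, projective and geometrically irreducible and $P$ is non-constant, $P$ is surjective on $\bar K$-points. It follows that a closed point $v_0$ of $C$, lying under a geometric point $v\in C(\bar K)$, is primitive in $D_n$ precisely when $P(v)\in E(\bar K)$ has exact order $n$.

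To prove (1), I would exhibit a $\bar K$-point $Q$ of $E$ of exact order $n$. If $p\nmid n$, the group $E[n](\bar K)\cong(\ZZ/n\ZZ)^2$ obviously contains such a point. If $E$ is ordinary with $n=p^a m$ and $p\nmid m$, then the structure $E[p^a](\bar K)\cong\ZZ/p^a\ZZ$ combined with $E[m](\bar K)\cong(\ZZ/m\ZZ)^2$ produces an element of order $n$ inside their internal direct sum. Pulling back such a $Q$ along the surjection $P$ yields a geometric point $v$ whose image $v_0$ in $C$ is primitive in $D_n$: one has $v_0\in\Supp(D_n)$ because $nP(v)=O$, and $v_0\notin\Supp(D_m)$ for $1\le m<n$ because $E$ is defined over $K$, so every Galois conjugate of $Q$ still has exact order $n$.

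For (2), the structural input is that for supersingular $E$ in characteristic $p$, the isogeny $[p]:E\to E$ is purely inseparable of degree $p^2$. Combined with the standard identity $\phi^*O_{E'}=(\deg_i\phi)\sum_{R\in\ker\phi(\bar K)}R$ for an isogeny $\phi:E\to E'$ of elliptic curves, this gives $[p]^*O_E = p^2\cdot O_E$ as a divisor on $E$. Writing $[n]=[p]\circ[n/p]$ and pulling back along $P$ then yields
\begin{equation*}
D_n \;=\; P^*[n/p]^*[p]^*O_E \;=\; P^*[n/p]^*(p^2\, O_E) \;=\; p^2\,D_{n/p},
\end{equation*}
so $\Supp(D_n)=\Supp(D_{n/p})$ and hence no valuation can be primitive in $D_n$, since $n/p<n$.

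I do not anticipate any serious obstacle: the argument is essentially a direct translation of the primitive-valuation condition into a geometric statement about the image of $P$, together with the well-known structure of the $[p]$-map on supersingular curves. The only places requiring some care are the passage from $\bar K$-points of $C$ to closed points (using Galois invariance of the order of $Q$ in $E(\bar K)$, which needs $E$ itself, and not merely its $j$-invariant, defined over $K$) and the pullback formula for $\phi^*O$, both of which are standard.
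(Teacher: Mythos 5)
Your proposal is correct and follows essentially the same route as the paper's proof: both write $D_n=P^*[n]^*(O)$, use the purely inseparable degree-$p^2$ structure of $[p]$ on a supersingular curve to get $D_n=p^2D_{n/p}$, and obtain a primitive valuation in case (1) by pulling back a $\overline{K}$-point of exact order $n$ along the surjective morphism $P$. Your explicit Galois-conjugation remark just makes precise the step the paper leaves implicit when it says the valuation of such a point is primitive.
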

\begin{proof}
	We have $\dn = ([n]\circ P)^* (O) = P^* [n]^* (O)$.
	Note that 
	\begin{equation}\label{eq:pullback_of_identity}
	[n]^*(O) = \deg_i([n]) \sum_{Q\in E(\overline{K})[n]} (Q)
	\end{equation}
	where $\deg_i([n])$ denotes the inseparable degree of~$[n]$.
	If $p\mid n$ and $E$ is supersingular,
	then the endomorphism $[p]$ is purely inseparable of degree $p^2$,
	hence $\deg_i([p\cdot\frac{n}{p}])=p^2\deg_i([\frac{p}{n}])$ and $E(\overline{K})[p]=\{O\}$;
	so formula \eqref{eq:pullback_of_identity} gives $D_n = p^2 D_{n/p}$, which proves~(2).
	Moreover, if $E$ is ordinary or $p\nmid n$, then $E(\overline{K})$ contains a point $Q_n$ of order~$n$.
	Since $P$ is a dominant morphism,
	there is a point that maps to $Q_n$ under~$P$,
	and the valuation associated with such a point is a primitive valuation of $\dn$.
\end{proof}
\begin{remark}
	The existence of a cover $P : C \rightarrow \cstE$
	implies that $C$ has genus greater than or equal to~$1$.
	
	In fact, as all such covers factor through the identity map
	$\mathrm{id}:\widetilde{E}\rightarrow \widetilde{E}$, we see
	that for every elliptic curve
	$\cstE/K$, there is one prototypical example given by
	$P = \mathrm{id}:\widetilde{E}\rightarrow \widetilde{E}$.
	In other words, this example has
	$C=\cstE$ and $S=\cstE\times \cstE$.
	The point $P\in E(K(E))$ corresponds to the morphism
	$\mu\circ\Delta$ where $\Delta:\cstE\rightarrow \cstE\times\cstE$
	is the diagonal map and $\mu$ is the projection on the first factor.
\end{remark}
\begin{example}\label{ex:ssconst}
	Let $C = \cstE : y^2 = x^3 + x$ over $K = \FF_3$
	and $P = (x,y) \in \cstE(K(\cstE))$.
	Let $i$ be a square root of $-1$ in a quadratic
	extension of $\FF_3$.
	Then
	\begin{align*}
	\cstE(\overline{K})[1] &= \{O\}\\
	\cstE(\overline{K})[2] &= \cstE(\overline{K})[1]\cup \{(0,0), (\pm i, 0)\}\\
	\cstE(\overline{K})[3] &= \cstE(\overline{K})[1]\\
	\cstE(\overline{K})[4] &= \cstE(\overline{K})[2]\cup \{(1,\pm i), (-1, \pm 1), (\pm i + 1, \pm i), (\pm i - 1, \pm 1)\},
	\end{align*}
	where all the signs are independent.
	In particular, we obtain
	\begin{align*}
	\dm{1} &= O\\
	\dm{2} &= \dm{1} + (0,0) + (i,0)+(-i,0)\\
	\dm{3} &= 9O\\
	\dm{4} &= \dm{2} + \sum_{s\in\{\pm 1\}}
	\left[
	\begin{array}{l}
	(1,si) + (-1,s) + (i+1,si) \\+ (-i+1, si) + (i-1, s) + (-i-1, s)
	\end{array}
	\right]\\
	\dm{6} &= 9\dm{2}.
	\end{align*}
	By symmetry, for $b\not=0$ the points $(a,b)$ and $(a,-b)$ only appear together.
	Because of that, we introduce the following notation.
    Let $$D_{m}'=\left\{\begin{array}{ll}
     \dm{m}-\ord_O(\dm{m}) O & \qquad \textrm{if }2\nmid m,\\
     \dm{m}-\ord_O(\dm{m})\dm{2} & \qquad \textrm{if }2\mid m.
    \end{array}   \right.$$
	The divisor $D_m'$ is the pull-back $x^{*}\delta_m$
	of an effective divisor $\delta_m$ on the affine $x$-line~$\AA^1$.
	Let $p(m)$ denote a monic polynomial with divisor of zeroes
	equal to~$\delta_{m}$.
	We get that the divisor $\dn$ has the form
	\begin{equation}
	\dn = a(n) (O) + b(n) \div_0(y) + \div_0(p(n)),
	\end{equation}
	where
	\begin{align*}
	a(n) &= 9^{\mathrm{ord}_3(n)},\\
	b(n) &= \left\{\begin{array}{ll}0 &\mbox{if $n$ is odd,}\\
	a(n) &\mbox{if $n$ is even,}\end{array}\right.
	\end{align*}
	and we present below only the factorisation of the polynomial $p(n)$.
	{\small
	\begin{align*}
	p(1) &= 1 \\
	p(2) &= 1 \\
	p(3) &= 1 \\
	p(4) &= (x + 1) \cdot (x + 2) \cdot (x^{2} + x + 2) \cdot (x^{2} + 2 x + 2) \\
	p(5) &= (x^{4} + x + 2) \cdot (x^{4} + 2 x + 2) \cdot (x^{4} + x^{2} + 2) \\
	p(6) &= 1
	\\
	p(7) &= (x^{3} + x^{2} + 2 x + 1) \cdot (x^{3} + 2 x^{2} + 2 x + 2) \cdot (x^{6} + 2 x^{4} + x^{2} + 1) \\
	&\quad \cdot (x^{6} + x^{5} + 2 x^{4} + 2 x^{3} + 2 x^{2} + 2 x + 1) \cdot (x^{6} + 2 x^{5} + 2 x^{4} + x^{3} + 2 x^{2} + x + 1) \\
	p(8) &= (x + 1) \cdot (x + 2) \cdot (x^{2} + x + 2) \cdot (x^{2} + 2 x + 2) \cdot (x^{4} + x^{3} + x^{2} + x + 1) \\
	&\quad \cdot (x^{4} + x^{3} + x^{2} + 2 x + 2) \cdot (x^{4} + x^{3} + 2 x^{2} + 2 x + 2)\\
	&\quad \cdot (x^{4} + 2 x^{3} + x^{2} + x + 2) \cdot (x^{4} + 2 x^{3} + x^{2} + 2 x + 1)\\
	&\quad \cdot (x^{4} + 2 x^{3} + 2 x^{2} + x + 2) \\
	p(9) &= 1 \\
	p(10) &= (x^{4} + x + 2) \cdot (x^{4} + 2 x + 2) \cdot (x^{4} + x^{2} + 2) \cdot (x^{4} + x^{2} + x + 1) \\
	&\quad\cdot (x^{4} + x^{2} + 2 x + 1) \cdot (x^{4} + 2 x^{2} + 2) \cdot (x^{4} + x^{3} + 2) \cdot (x^{4} + x^{3} + 2 x + 1)\\
	&\quad \cdot (x^{4} + x^{3} + x^{2} + 1) \cdot (x^{4} + 2 x^{3} + 2) \cdot (x^{4} + 2 x^{3} + x + 1) \cdot (x^{4} + 2 x^{3} + x^{2} + 1) \\
	p(11) &= (x^{10} + x^{7} + x^{5} + x^{4} + 2 x^{3} + 2 x^{2} + x + 2)\\
	&\quad \cdot (x^{10} + 2 x^{7} + 2 x^{5} + x^{4} + x^{3} + 2 x^{2} + 2 x + 2) \\
	&\quad\cdot (x^{10} + 2 x^{8} + x^{6} + x^{5} + x^{2} + 2 x + 2) \cdot (x^{10} + 2 x^{8} + x^{6} + 2 x^{5} + x^{2} + x + 2) \\
	&\quad\cdot (x^{10} + 2 x^{8} + x^{7} + x^{6} + x^{5} + 2 x^{4} + 2 x^{3} + x^{2} + 2)\\
	&\quad \cdot (x^{10} + 2 x^{8} + 2 x^{7} + x^{6} + 2 x^{5} + 2 x^{4} + x^{3} + x^{2} + 2) \\
	p(12) &= (x + 1)^{9} \cdot (x + 2)^{9} \cdot (x^{2} + x + 2)^{9} \cdot (x^{2} + 2 x + 2)^{9}
	\end{align*}}\indent This confirms
	the equality $\dm{12} = 9\dm{4}$ and the fact that terms $\dn$ with $3\nmid n$ have primitive valuations.
\end{example}

\section{\texorpdfstring{Relating constant $E$ to constant $j$ globally}{Relating constant E to constant j globally}}\label{sec:relating_constant_E_to_constant_j_globally}

\subsection{Definitions and example}\label{sec:definitions}
Let $E$ be an elliptic curve over $F=K(C)$
and let $P\in E(F)$ be a point of infinite order.
Now suppose $j(E)\in K$.
Note that this includes the case where $E$
is supersingular by \cite[V.3.1(a)(iii)]{Silverman_arithmetic}.

The idea behind the proof of our main results is to relate the
EDS $(\dn)_n$ obtained from $P$ with constant $j$-invariant
to an EDS $(\dnpr)_n$
obtained from a point on a constant elliptic curve
and then to apply Theorem~\ref{thm:constE} to $(\dnpr)_n$.

	\begin{lemma}\label{lem:Fprime}
		Let $K$ be a field, let $C/K$ be a smooth, projective, geometrically
		irreducible curve and let $F = K(C)$.
		Let $E/F$ be an elliptic curve with $j(E)\in K$.
		
		Then there exist
		\begin{enumerate}[(i)]
			\item an elliptic curve $\cstE/K$ with $j(\cstE) = j(E)$,
			\item finite extensions $K'\supset K$ and $F'\supset F$ with $K'\subset F'$,
			\item an isomorphism $\phi : E_{F'} \rightarrow \cstE_{F'}$,
			\item a smooth, projective, geometrically irreducible curve $C'/K'$ with
			$F' = K'(C')$, and
			\item a non-constant morphism $f : C'\rightarrow C_{K'}$
			inducing the inclusion map $FK'\hookrightarrow F'$.
		\end{enumerate}	
	\end{lemma}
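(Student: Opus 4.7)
The plan is to first invoke the standard existence of elliptic curves with prescribed $j$-invariant to produce $\cstE/K$ with $j(\cstE)=j(E)$ (via explicit Weierstrass equations, adjusted for $j=0,1728$ and characteristics $2,3$), giving~(i). Since $j(E_F)=j(\cstE_F)$, the two elliptic curves over $F$ are twists of each other and become isomorphic over $\overline F$. Fix any such isomorphism $\psi:E_{\overline F}\to\cstE_{\overline F}$; its finitely many coefficients (in any pair of Weierstrass models) generate a finite extension $F'$ of $F$, over which $\psi$ descends to an isomorphism $\phi:E_{F'}\to\cstE_{F'}$. This yields~(iii) and produces the field~$F'$.

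For (ii) and (iv), let $K'$ be the algebraic closure of $K$ inside $F'$. As an algebraic subextension of the finitely generated extension $F'/K$, the field $K'$ is finite over $K$, and by construction $K'\subset F'$. Since $F'/F$ is finite and $F/K$ has transcendence degree $1$, so does $F'/K'$. Let $C'/K'$ be a smooth projective model of $F'/K'$, so $F'=K'(C')$. For $C'$ to be geometrically irreducible one needs $F'/K'$ to be a regular extension; $K'$ is (separably) algebraically closed in $F'$ by construction, and separability of $F'/K'$ can be arranged by choosing $F'/F$ separable at the previous step, using that $F/K$ is already regular because $C$ is smooth and geometrically integral.

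Finally, for (v), the inclusions $F\subset F'$ and $K'\subset F'$ combine into a finite field inclusion $FK'\hookrightarrow F'$, and the identification $FK'=F\otimes_K K'=K'(C_{K'})$ (valid since $C$ is geometrically integral over $K$) shows that this inclusion corresponds to a finite, hence non-constant, morphism $f:C'\to C_{K'}$ that induces it. The main obstacle is the geometric irreducibility of $C'$ in the non-perfect setting: one must verify that the twist relating $E$ to $\cstE_F$ can be split by a separable extension of $F$ and that no hidden inseparable constants appear in $F'$, exactly the sort of subtlety flagged by the authors in Section~\ref{sec:additional_examples} and Example~\ref{ex:inseparable_example}.
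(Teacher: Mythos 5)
Your construction of $\cstE$, of the splitting field $F''$ generated by the coefficients of an isomorphism, and of $K'=\overline K\cap F''$ reproduces the paper's argument, and it is complete when $K$ is perfect. The gap is in the non-perfect case, at the step ``let $C'/K'$ be a smooth projective model of $F'/K'$.'' Regularity of the extension $F'/K'$ (separability plus $K'$ algebraically closed in $F'$) only guarantees that the unique normal projective model is regular and geometrically integral; over an imperfect constant field it does \emph{not} guarantee that this model is smooth, and since any smooth projective model would have to coincide with the normal one, a smooth model may simply fail to exist. This is exactly what Example~\ref{ex:nonsmooth} exhibits: with $K=\FF_2(b)$, $F=K(u)$, $E:y^2+u^3y=x^3+b$ and $\cstE:y^2+y=x^3$, the field generated by the coefficients of the isomorphism is $F''=F(t)$ with $t^2+u^3t+b=0$, which is a \emph{separable} extension of $F$ and a regular extension of $K'=K$, yet the corresponding regular projective curve over $K$ is not smooth. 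So your proposed remedy --- ``arrange $F'/F$ separable and avoid hidden inseparable constants'' --- does not address the actual failure mode: in that example both conditions already hold and the construction still breaks.

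The paper's proof circumvents this differently: it base-changes to $\overline K$, takes the smooth model $C'_{\overline K}$ of $F''\overline K$ there, embeds it into $\mathbf P^n\times(\mathbf P^1)^r$ using the coefficients $b_1,\dots,b_r$ of the isomorphism as extra coordinates, and then defines $K'$ as the field generated by the coefficients of the defining equations and of the covering map $f_{\overline K}$. Crucially, this $K'$ is in general strictly larger than $\overline K\cap F''$, and correspondingly $F'=K'(C')$ is strictly larger than $F''$; your construction, which pins $K'$ down as the algebraic closure of $K$ inside the splitting field, has no mechanism to enlarge the constant field in this way, and without it the statement (iv) cannot be obtained in general.
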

\begin{notation}\label{not:Fprime}
	On top of the notation of Lemma~\ref{lem:Fprime}, we use the following notation.
	Given a point $P\in E(F)$,
	\begin{itemize}
		\item[(vi)] let	$P' = \phi(P)\in\cstE(F')$,
		\item[(vii)] let $D_{nP}$ be the EDS obtained from $(E,P)$ as defined in \eqref{eq:defedseasy}, and
		\item[(viii)]
	 let $D_{nP'}$ be the EDS obtained from $(\cstE, P')$.	
	\end{itemize}
	The symbol $v$ will denote a place of $C$ and
	$v'$ will denote a place of $C'$ lying over~$v$.
\end{notation}
\begin{proof}
	(i) Let $\cstE/K$ be an elliptic curve with $j(\cstE) = j(E)$,
	let $\overline{F}$ be an algebraic closure of~$F$
	and let $\overline{K}\subset \overline{F}$ be an algebraic closure of~$K$.
	Then there exists an isomorphism
	$\phi_{\overline{F}}: E_{\overline{F}}\rightarrow \cstE_{\overline{F}}$
	by \cite[Proposition III.1.4(b)]{Silverman_arithmetic}.
	Let $F''\subset\overline{F}$ be generated over $F$ by
	the coefficients $b_1$, $b_2,\ldots, b_r\in \overline{F}$ of~$\phi_{\overline{F}}$.
	
	If $K$ is perfect, then we take $F' = F''$ and $K' = (\overline{K}\cap F'')$
	(which satisfy (ii) and (iii))
	and find by \cite[Remark II.2.5]{Silverman_arithmetic}
	a curve~$C'$ satisfying (iv). The inclusion $FK'\subset F'$ then
	gives the morphism of~(v).
	
	If $K$ is not perfect, then this
	construction does not always give a smooth curve
	(see Example~\ref{ex:nonsmooth}),
	so we do some additional steps in our construction.
	
    The field $F''' := F''\overline{K}$ is a
	finitely generated extension of transcendence degree~$1$
	over the algebraically closed field~$\overline{K}$,
	hence is the function field
	of a smooth, projective, (geometrically) irreducible curve $C'_{\overline{K}} / \overline{K}$
	by \cite[Remark II.2.5]{Silverman_arithmetic}.
	
	The inclusion $F\overline{K}\subset F'''$ induces
	a non-constant rational map $f_{\overline{K}} : C'_{\overline{K}} \rightarrow C_{\overline{K}}$,
	which is a morphism as the curves are regular and projective.
	
	Choose embeddings $i:C\rightarrow \mathbf{P}^m$ over $K$
	and $j : C'_{\overline{K}}\rightarrow \mathbf{P}^n$ over $\overline{K}$.
	Then embed $C'_{\overline{K}}$ into $\mathbf{P}^n\times (\mathbf{P}^1)^r$
	via $j \times b_1\times\cdots \times b_r$.

	(ii,iv) Then let $K'$ be generated over $K$ by the coefficients
	of a system of defining equations of
	$C'_{\overline{K}}\subset \mathbf{P}^n\times (\mathbf{P}^1)^r$
	and~$f_{\overline{K}}$.
	From now on we view $C'_{\overline{K}}$ as a projective curve $C'$ over~$K'$,
	which is smooth and irreducible over~$\overline{K}$.
	In particular, the curve $C'$ is smooth, projective and
	geometrically irreducible over~$K'$.
	
	Moreover, the field $F' := K'(C')$
	contains the coefficients $b_1,\ldots, b_r$
	of $\phi_{\overline{F}}$ since they are coordinate functions on the $r$ copies of
	$\mathbf{P}^1$.
	(iii) In particular the morphism 
	$\phi_{\overline{F}}$ can be viewed as a morphism $\phi:E_{F'}\rightarrow \cstE_{F'}$.
	(v) Similarly $K'$ contains the coefficients of $f_{\overline{K}}$,
	so $f_{\overline{K}}$ can be viewed as a morphism $f:C'\rightarrow C_{K'}$.
\end{proof}
The following two examples illustrate why the proof of Lemma~\ref{lem:Fprime}
is so complicated for non-perfect base fields~$K$.
The reader who is interested mostly in the perfect case may wish to
skip ahead to Example~\ref{ex:ss}.
\begin{example}\label{ex:nonsmooth}
	Here is an example to show that we cannot just take $F' = F''$
	if $K$ is non-perfect.
	Let $K = \FF_2(b)$, $F = K(u)$, and
	$E : y^2 + u^3y = x^3 + b$, so $j(E) = 0$.
	Take $\cstE : y^2 + y = x^3$
	and $\phi : (x,y) \mapsto (u^{-2} x, u^{-3} (y + t))$ where
	$t\in\overline{F}$ satisfies
	\begin{equation}\label{eq:nonsmooth}t^2 + u^3t + b = 0.
	\end{equation}
	% DETAILS TO KEEP IN COMMENTS:
	%sage: P.<u,b,t> = GF(2)[]
	%sage: Q.<x,y> = P[]
	%sage: X = u^-2*x
	%sage: Y = u^-3*(y+t)
	%sage: u^6*(Y^2+Y-X^3) + (t^2+u^3*t+b)
	%x^3 + y^2 + u^3*y + b
	Then $F'' = F(t) = K(u,t)$
	is the function field of a regular, projective, geometrically integral curve over~$K$
	with affine open part given by \eqref{eq:nonsmooth},
	but this curve is not smooth.
	Indeed the given model is smooth exactly outside $u=0$ and is regular at the place
	$u=0$.
%   DETAILS TO KEEP IN COMMENTS:	
%	Regular at u=0 as the local maximal ideal there is there is principal and generated by u.
%	Now taking $\beta = \sqrt{b}\in \overline{K}$
%	and $v = (t+\beta)/u$, we get
%	$F''' = \overline{K}(v,t)$ with
%	$v^3 = t^2 + bt$.
%	This defines an elliptic curve $C'_{\overline{K}}$ over $\overline{K}$,
%	and ultimately we get
%	$K' = K(\beta) = \FF_2(\beta)$, $F' = K'(v,t)$,
%	$f : (v,t)\mapsto u = (t+\beta)/v$.
\end{example}
\begin{example}\label{ex:extracoordinates}
	To motivate why we embed $\smash{C'_{\overline{K}}}$
	in such a complicated way in the proof,
	let $K$ be a field of characteristic $\neq 3$
	in which $-1$ is not a square.
		Let $F = K(t)$,
		$E : y^2 = x^3 + t^2$,
		$\cstE : y^2 = x^3 - 1$,
		$s = \sqrt[3]{t}\in\overline{F}$, and $\sqrt{-1}\in \overline{F}$.
		Take $\phi : (x,y)\mapsto (-s^{-2}x, \sqrt{-1}s^{-3}y)$,
		so $F'' = F(\sqrt{-1}s)$.
		Then $F''' = \overline{K}(s)$, so we can take $C' = \mathbf{P}^1$
		with $s$ as coordinate.
		The map $f$ is then given by $t = s^3$.
		
		Now we can take $i$ and $j$ to be the identity map.
		If we had defined $K'$ using only $f$ and the images of $i$
		and $j$, then we would have gotten $K' = K$ and $F' = K'(s)=K(s)$.
		But then $\phi$ is not defined over~$F'$.
%   DETAILS TO KEEP IN COMMENTS:	
%		, because $\sqrt{-1}s^{-3}$ is not in $K(s)$.
%		That is why the proof takes a model of $C'_{\overline{K}}$
%		where the coefficients $b_1 = -s^{-2}$
%		and $b_2 = is^{-3}$ appear as coordinates. Here $C'_{\overline{K}}$
%		is embedded into $\mathbf{P}^1\times\mathbf{P}^1\times\mathbf{P}^1$
%		via $j\times b_1\times b_2 : s\mapsto (s, -s^{-2}, is^{-3})$.
%		A system of defining equations of $C'\subset (\mathbf{P}^1)^3$ is e.g.\ 
%		$x_1^2 x_2 = -1$, $x_1^3x_3 = i$.
%		Here $i$ appears as a coefficient of a defining equation, so
%		in the new proof we get $K'=K(i)$
%		and now $F'$ contains the coordinate functions $x_2=b_1$ and
%		$x_3 = b_2$,
%		so now $\phi$ is defined over $F'$.
\end{example}

Here is an example where we compute both $(\dn)_n$ and $(\dnpr)_n$
and compare them.

\begin{example}\label{ex:ss}
	Let $E$ be the supersingular elliptic curve over $F = \FF_3(t)=\FF_3(\P^1)$
	given by
	\begin{equation}
	E : y^2 = x^3 + tx -t
	\end{equation}
	and let $P = (1,1)\in E(F)$.
	We start by computing $\dn$ for a few values of~$n$.
	The discriminant $\Delta(E)$ is $-t^3$, hence the given model is minimal for all finite places
	of $\P^1$.
	Therefore, we can compute these valuations of $\dn$
	by computing the square root of the denominator of~$x(nP)$.
	For the valuation at infinity, we take $(x',y') = (t^{-2}x,t^{-3}y)$,
	so 
	\[ E : {y'}^2 = {x'}^3 + t^{-3}{x'} - t^{-5},\qquad
	(x'(P), y'(P)) = (t^{-2},t^{-3}),\]
	which is minimal since the discriminant $-t^{-9}$ has valuation~$9$.
	
	To keep the notation short, we write $p(t)\cdot \infty^k$
	with $p(t)\in \FF_3(t)$ to mean
	$\div_0(p(t)) + k(\infty) := \sum_{v\not=\infty}\ord_v(p(t))(v) + k(\infty)$.
	In this notation, we compute
    \begin{align*}
    \dm{1} &= \dm{2}=1, &
    \dm{3} &= t^{2},  \\
    \dm{4} &= (t + 2)^{3},  &
    \dm{5} &= (t^{2} + t + 2)^{3}, \\
    \dm{6} &= t^{2} \cdot \infty^{ 3 }, &
    \dm{7} &= (t + 1)^{3} \cdot (t^{3} + t^{2} + t + 2)^{3}, \\
    \dm{8} &= (t + 2)^{3} \cdot (t^{4} + 2 t^{3} + t + 1)^{3}, &
    \dm{9} &= t^{20}, \\
    \dm{10} &= (t^{2} + t + 2)^{3} \cdot (t^{2} + 2 t + 2)^{3} \cdot (t^{4} + t^{3} + t^{2} + 1)^{3}, \\
    \dm{11} &= (t^{10} + 2 t^{9} + 2 t^{8} + t^{7} + t^{6} + 2 t^{5} + 2 t^{4} + 2 t^{3} + 2 t^{2} + 1)^{3}, &
    \dm{12} &= t^{2} \cdot (t + 2)^{27} \cdot \infty^{ 3 }.
    \end{align*}
	All terms $\dn$ with $3\nmid n$ listed here have a
	primitive valuation except $\dm{1}$ and $\dm{2}$.
	All terms $\dn$ with $3\mid n$ listed here have
	no primitive valuation except $\dm{3}$ and $\dm{6}$.
	
	As $j(E)=0$, we find that $E$ is isomorphic over $\overline{F}$ to
	\begin{equation}
	\cstE : Y^2 = X^3 + X.
	\end{equation}
	
	Next, we look for an isomorphism $\phi : E_{\overline{F}}\rightarrow \cstE_{\overline{F}}$.
	All isomorphisms are given in case II of
	the proof of Proposition A.1.2(b) of
	Silverman~\cite{Silverman_arithmetic} as
	\begin{align}\label{eq:isomj0p3}
	X &= u^2 x + r,\qquad & Y &= u^3 y,\qquad\mbox{where}\\
	u^4 &= 1/t,\qquad & 0 &= 
	r^3 + r + u^2.\label{eq:uandv}
	\end{align}
	We use the notation $v = -r$ and solve for $u$ and~$v$
	in~\eqref{eq:uandv}.
	Choose a $4$th root $u\in \overline{F}$ of $1/t$,
	and 
	take $v\in\overline{F}$ such that $v^3+v=u^2$.
	Then $F' = F(u,v)$ is an extension of $F$ of degree $12$
	and is the function field of the curve
	\begin{equation}
	C' : u^2 = v^3+v
	\end{equation}
	over $K'=K$.
	The inclusion $F\rightarrow F'$ corresponds to the
	projection
	\begin{equation*}
	u^{-4} : C' \rightarrow \P^1 : (v,u)\mapsto u^{-4},
	\end{equation*}
	which is a $12$-fold covering.
	The isomorphism $\phi$ given by \eqref{eq:isomj0p3} is
	\begin{align}
	\phi : E_{F'} & \rightarrow \cstEFpr\\
	(x,y) & \mapsto (u^2x -v, u^3 y).\nonumber
	\end{align}
	Then \begin{equation}P' = \phi(P) = (u^2-v, u^3) = (v^3,u^3)=\mathrm{Frob}_3((v,u))\in \cstE(F').\end{equation}
	In other words, if we identify $C'$ with $\cstE$ via $(X,Y) = (v,u)$, then
	$P': C'\rightarrow \widetilde{E}$ is
	the (purely inseparable) $3$rd power Frobenius endomorphism
	$\mathrm{Frob}_3$
	(of degree~$3$).
	In particular, the EDS $(\dnpr)_n$ obtained from $(\cstE,P')$
	is $3$ times the EDS of Example~\ref{ex:ssconst}.
\end{example}

\subsection{\texorpdfstring{The point $P'$ is non-constant}{The point Pprime is non-constant}}
Next we show that if $(E,P)$ is non-constant
(cf.~Definition~\ref{def:const}),
then the point $P'=\phi(P)\in \cstE(F')$ of Notation~\ref{not:Fprime}
is non-constant (that is, not in $\cstE(\overline{K})$).

\begin{lemma}[Tate normal form]\label{lem:tatenormalform}
	Let $E$ be an elliptic curve over a field $L$ and let $P\in E(L)$
	be a point
	of order $\geq 4$.
	Then there are unique $b$, $c\in L$
	and a change of coordinates over $L$ such that
	\begin{equation}
	E : y^2 + (1-c)xy - by = x^3 - bx^2,\qquad P=(0,0).
	\end{equation}
\end{lemma}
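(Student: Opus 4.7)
My plan is to exploit the full group of allowable Weierstrass coordinate changes fixing the identity~$O$, namely $(x,y)\mapsto(u^2x+r,\,u^3y+su^2x+t)$ with $u\in L^\times$ and $r,s,t\in L$, and to use the hypothesis $\mathrm{ord}(P)\geq 4$ to pin down the three remaining parameters $u,r,s,t$ after demanding that $P$ lie at the origin. The existence proof is a three-step normalisation; uniqueness follows by checking that the only Weierstrass change of variables preserving both $P=(0,0)$ and the special shape of the Tate equation is the identity.

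\textbf{Step 1 (place $P$ at the origin).} Starting from a general Weierstrass equation $y^2+a_1xy+a_3y=x^3+a_2x^2+a_4x+a_6$ for $E$, translate by $(x,y)\mapsto(x+x(P),y+y(P))$. Plugging in $(0,0)$ into the new equation forces $a_6=0$, so we may assume $a_6=0$ and $P=(0,0)$.

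\textbf{Step 2 (kill $a_4$).} The inverse of $P=(0,0)$ on the curve is $(0,-a_3)$, so $a_3\neq0$ iff $2P\neq O$; this holds since $\mathrm{ord}(P)\geq 4$. Apply the shear $y\mapsto y+rx$ with $r=a_4/a_3$: a direct substitution gives $a_3\mapsto a_3$, $a_4\mapsto a_4-a_3r=0$, and updates $a_1,a_2$, while $P=(0,0)$ and $a_6=0$ are preserved. We now have $y^2+a_1xy+a_3y=x^3+a_2x^2$.

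\textbf{Step 3 (scale so $a_2=a_3$).} In the current model the tangent to $E$ at $P$ is $y=0$, which meets $E$ again at $(-a_2,0)$, so $2P=(-a_2,\,a_1a_2+a_3)$. Then $3P=O$ would force $2P=-P=(0,-a_3)$, i.e.\ $a_2=0$; since $\mathrm{ord}(P)\geq 4$ we conclude $a_2\neq 0$. Now apply the scaling $(x,y)\mapsto(u^2x,u^3y)$ with $u=a_3/a_2$: this sends $a_i$ to $a_i/u^i$, so $a_2\mapsto a_2^3/a_3^2$ and $a_3\mapsto a_2^3/a_3^2$, making the two coefficients equal. Setting $b:=-a_2^3/a_3^2$ and $1-c:=a_1/u$ yields the Tate equation.

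\textbf{Step 4 (uniqueness).} Any isomorphism between two Tate models of $(E,P)$ must preserve the Weierstrass shape and send $(0,0)$ to $(0,0)$, so it has the form $(x,y)\mapsto(u^2x,\,u^3y+su^2x)$ with $u\in L^\times$ and $s\in L$. Expanding shows that the resulting $a_4$-coefficient equals $-a_3s/u^4$; preserving $a_4=0$ together with $a_3\neq 0$ forces $s=0$. Then $a_2\mapsto a_2/u^2$ and $a_3\mapsto a_3/u^3$, and preserving $a_2=a_3$ (both nonzero) forces $u=1$. Hence the isomorphism is the identity, and $b,c$ are uniquely determined.

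The only slightly delicate point is the equivalence between the order conditions on $P$ and the nonvanishing conditions $a_3\neq 0$ in Step~2 and $a_2\neq 0$ in Step~3; everything else is bookkeeping about how the $a_i$ transform under the standard Weierstrass changes of variables. I expect no real obstacle, since Lemma~\ref{lem:tatenormalform} is a classical statement and the hypothesis $\mathrm{ord}(P)\geq 4$ is tailored exactly to make both normalisation steps possible.
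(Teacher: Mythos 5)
Your proof is correct and follows essentially the same route as the paper: translate $P$ to $(0,0)$, shear to kill $a_4$ (possible since $2P\neq O$ forces $a_3\neq 0$), scale to force $a_2=a_3$ (possible since $3P\neq O$ forces $a_2\neq 0$), and get uniqueness because all freedom in the Weierstrass changes $(x,y)\mapsto(u^2x+r,\,u^3y+su^2x+t)$ has been used up. Two cosmetic remarks: the $y$-coordinate of $2P$ should be $a_1a_2-a_3$ (harmless, as only $x(2P)=-a_2$ matters), and in Step~3 the implication should be run as $a_2=0\Rightarrow x(2P)=0\Rightarrow 2P\in\{P,-P\}\Rightarrow 3P=O$ or $P=O$ (your tangent-line computation gives exactly this), rather than the converse direction as literally phrased.
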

\begin{proof}
	Starting with a general Weierstrass equation, first translate
	to get $P=(0,0)$ (allowed as $P\not=O$). Then
	\[ E : y^2 + a_1xy + a_3y = x^3 + a_2x^2+a_4x.\]
	With $y \mapsto y-a_4/a_3 x$
	(allowed as $2P\not=O$), we get $a_4=0$.
	With $(x,y)\mapsto (u^2x,u^3y)$ and $u= a_2/a_3$
	(allowed as $3P\not=O$),
	we get $a_2=a_3$.
	Then let $b = -a_2$ and $c = 1-a_1$.
	This proves existence.
	
	Unicity follows as we used up all freedom for
	changes of Weierstrass equations
	$(x,y)\mapsto (u^2x + r, u^3y + u^2sx + t)$
	as in \cite[III.3.1(b)]{Silverman_arithmetic}.
\end{proof}

\begin{corollary}\label{cor:whenconst}
	Let $K$, $F$, $E$, $\cstE$, $P$, and $P'$ be as in Notation~\ref{not:Fprime}
	(this includes $j(E)\in K$).
	Suppose that $P$ has order $\geq 4$.
	If $P'$ is constant
	(that is, is in $\cstE(\overline{K})$),
	then the pair $(E,P)$ is constant
	as in Definition~\ref{def:const}.
\end{corollary}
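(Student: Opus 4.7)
The plan is to exploit the uniqueness of Tate normal form (Lemma~\ref{lem:tatenormalform}) to show that the Weierstrass coefficients defining $E$ adapted to $P$ actually lie in $K$; this will yield a $K$-model of $E$ on which $P$ becomes $K$-rational. Since $\phi$ is an isomorphism, $P'$ has the same order as $P$, which is $\geq 4$, so Lemma~\ref{lem:tatenormalform} applies to both pairs: over $F$ we obtain unique $b,c\in F$ putting $(E,P)$ in Tate normal form, and over $\overline{K}$ (using the hypothesis $P'\in\cstE(\overline{K})$) we obtain unique $b',c'\in\overline{K}$ putting $(\cstE_{\overline{K}},P')$ in Tate normal form.

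Next, I would compare these two normal forms over a common overfield. Let $L$ be an algebraic closure of $F'$; then $L$ contains both $F$ and $\overline{K}$. The change of coordinates witnessing Tate normal form over $F$ (respectively over $\overline{K}$) remains a valid change of coordinates over $L$, so by uniqueness of Tate normal form over $L$ the pairs $(E_L,P)$ and $(\cstE_L,P')$ still have coefficients $(b,c)$ and $(b',c')$ respectively. But $\phi$ base-changes to an isomorphism $E_L\to\cstE_L$ sending $P$ to $P'$, and a second appeal to uniqueness forces $b=b'$ and $c=c'$. Hence $b,c\in F\cap\overline{K}$, and as in the proof of Lemma~\ref{lem:constconst}, the smoothness and geometric irreducibility of $C$ give $F\cap\overline{K}=K$, so $b,c\in K$.

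The Tate normal form equation with coefficients in $K$ therefore defines an elliptic curve $\cstE''/K$ with marked point $P''=(0,0)\in\cstE''(K)$, and the change of coordinates realising the normal form provides an $F$-isomorphism $E\to\cstE''_{F}$ carrying $P$ to $P''$, which is exactly the data required to witness that $(E,P)$ is constant in the sense of Definition~\ref{def:const}. The main conceptual hurdle is passing from the $F'$-data $(\phi,P')$ back to data defined over $K$; this is resolved by the rigidity of Tate normal form (it has no nontrivial automorphisms of Weierstrass equations left to play with) together with the geometric integrality of $C$, which cuts $F\cap\overline{K}$ down to $K$.
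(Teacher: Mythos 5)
Your proposal is correct and follows essentially the same route as the paper's proof: put both $(E,P)$ and $(\cstE,P')$ in Tate normal form, use uniqueness over a common algebraic closure (via the isomorphism $\phi$ sending $P$ to $P'$) to identify the coefficients, and conclude $b,c\in F\cap\overline{K}=K$, which exhibits $(E,P)$ as constant. The extra detail you supply (the intermediate field $L$ and the explicit $K$-model $\cstE''$ with $P''=(0,0)$) is just an expansion of the same argument.
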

\begin{proof}
	If $P'\in \cstE(\overline{K})$, then the Tate normal form
	of $(\cstE,P')$ has $b$, $c\in \overline{K}$.
	The Tate normal form of $(E,P)$ has $b$, $c\in F$.
	By uniqueness of the Tate normal form over $\overline{F}$,
	we get $b$, $c\in\overline{K}\cap F=K$, hence $(E,P)$
	is isomorphic over $F$ to a pair defined over~$K$.
\end{proof}

In particular, in our case where $P$ has order $\infty>4$,
if the pair $(E,P)$ is non-constant, then the point $P'$ is non-constant.

\section{\texorpdfstring{Relating constant $E$ to constant $j$ locally}{Relating constant E to constant j locally}}\label{sec:relating_constant_E_to_constant_j_locally}

\subsection{\texorpdfstring{Reduction modulo primes of curves with constant $j$}{Reduction modulo primes of curves with constant j}}
Elliptic curves with constant $j$-invariant
admit only places of good or additive reduction.
We show that the valuations $v$ of additive reduction appear early on
in the sequence $D_{nP}$ (Lemma~\ref{lem:additive}(\ref{it:additive1}--\ref{it:additive2})),
while those of good reduction appear in the same place
as in the corresponding constant sequence $\dnpr$ (Lemma~\ref{lem:constj}).

Recall that the rank of apparition $m(v)=m(P,v)$ of a valuation
$v$ of $F$ is the
smallest positive integer $n$ such that $v(\dn)>0$
(with $m(v)=\infty$ if it does not exist).

With the notation as in Notation~\ref{not:Fprime},
let $F_v$ be the completion of $F$ at~$v$.
Let $E_0(F_v)$ (respectively $E_1(F_v)$) be the subgroup of
$E(F_v)$ consisting of points that reduce to a non-singular point
(respectively the point $O$) on the reduction of the minimal
Weierstrass equation.
In particular, we have $v(D_n)>0$ if and only if $nP\in E_1(F_v)$.
Moreover the quotient $E(F_v)/E_0(F_v)$ is the component
group of the special fibre of the N\'eron model of $E$ at~$v$
(cf.\ \cite[Corollary IV.9.2]{Silverman_book2}
and \cite[Theorem 5.5]{Conrad_minimal_models}).
% DETAILS to keep in comments:
% The Silverman reference is only for perfect residue fields, but
% the Conrad reference is general.
% Conrad Theorem 5.5 states that for the Néron model calE:
%     calE^0 := (open R-subgroup of calE obtained by removing the non-identity components of the special fibre)
%             = (smooth part of the minimal Weierstrass model W)
% The Néron mapping property gives W(F_v) = calE(R_v),
% and then in there a point reduces to the singular point on W if and only if
% its corresponding element of calE(R_v) does not lie in calE^0(R_v).
% Conclusion: W_0(F_v) = calE^0(R_v).
% So then W(F_v) / W_0(F_v) = calE(R_v) / calE^0(R_v) is indeed the component group,
% and then indeed we can read off W(F_v) / W_0(F_v) from the tables.

\begin{lemma}
	\label{lem:additive}
	Let $F$ be the function field of a smooth, projective,
	geometrically irreducible curve over a field~$K$ of characteristic $p\geq 0$.
    Let $E$ be an elliptic curve over $F$
	and let $P\in E(F)$.
	Let $v$ be a discrete valuation of~$F$
	with $v(K)=\{0\}$ and $v(F)\not=\{0\}$,
	and let $d=d_v$ be the order of $P$ in
	the component group $E(F_v)/E_0(F_v)$.
	\begin{enumerate}
		\item \label{it:constant}
		If $j(E)\in K$, then $E$ has good or additive reduction at~$v$.
		\item \label{it:additive1} If $E$ has additive reduction at $v$, 
		then
		\begin{enumerate}
			\item if $p=0$, then $m(v)= d$ or $m(v)=\infty$.
			\item if $p>0$, then $m(v)= d$ or $m(v)=dp$.
		\end{enumerate}
	\item \label{it:additive2} If $E$ has additive reduction at $v$,
	then $d\leq 4$.
		\item \label{it:const}
		If $j(E)\in K$, then
		\begin{enumerate}[(a)]
			\item\label{it:consta} if $p\not=2$, then $d\leq 3$,
			\item\label{it:constb} if $p\not=3$ and $j(E)\not=0$, then $d\mid 4$,
			\item\label{it:constc} if $p\not\in\{2,3\}$ and $j(E)\not=0$, then $d\leq 2$.
		\end{enumerate}
	\end{enumerate}
\end{lemma}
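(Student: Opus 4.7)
The plan is to dispatch the four parts using standard local reduction theory, with increasing refinement needed for part~(4).

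For~(1), I would invoke the standard criterion that $E$ has multiplicative reduction at $v$ if and only if $v(j(E))<0$. Since $j(E)\in K$ and $v|_{K}=0$, we have $v(j(E))=0$, so multiplicative reduction is excluded and only good or additive reduction remain. For~(2), I would use the filtration $E_1(F_v)\subseteq E_0(F_v)\subseteq E(F_v)$, whose successive quotients are the component group and the nonsingular points of the special fibre; in the additive case, the latter identifies with the additive group of $k_v$. The condition $nP\in E_0(F_v)$ forces $d\mid n$, and writing $n=dq$, the condition $nP\in E_1(F_v)$ becomes $q\cdot\overline{dP}=0$ in $k_v$, where $\overline{dP}$ denotes the image of $dP$ in $E_0(F_v)/E_1(F_v)$. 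If $\overline{dP}=0$ then $m(v)=d$; otherwise, in characteristic~$0$ no positive multiple of a nonzero element of $k_v$ vanishes, so $m(v)=\infty$, while in characteristic $p>0$ the smallest positive $q$ annihilating $\overline{dP}$ is $p$, yielding $m(v)=dp$.

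For~(3), I would consult Tate's classification of Kodaira types: among those with additive reduction, the component groups are $0$, $\mathbf{Z}/2$, $\mathbf{Z}/3$, $(\mathbf{Z}/2)^2$, or $\mathbf{Z}/4$, all of whose elements have order at most~$4$, giving $d\leq 4$. For~(4), since $j(E)\in K$, I would fix an elliptic curve $\cstE/K$ with $j(\cstE)=j(E)$, which has good reduction at~$v$; after an unramified base change, $E_{F_v}$ becomes a twist of $\cstE_{F_v}$ by a cocycle in $H^1(\Gal(\overline{F_v}/F_v),\Aut(\cstE_{\overline{K}}))$. I would then match cyclic subgroups of $\Aut(\cstE_{\overline{K}})$ to the Kodaira type of the corresponding ramified twist: order~$2$ gives $I_0^*$ (component group $(\mathbf{Z}/2)^2$), order~$3$ gives $IV$ or $IV^*$ (component group $\mathbf{Z}/3$), order~$4$ gives $III$ or $III^*$ (component group $\mathbf{Z}/2$), and order~$6$ gives $II$ or $II^*$ (trivial). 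Assertions (a), (b), (c) then follow by enumerating which cyclic subgroups of $\Aut(\cstE_{\overline{K}})$ can actually occur in each of the listed regimes on $(p, j)$.

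The main obstacle will be making part~(4) fully rigorous in residue characteristics $p=2$ and $p=3$, where $\Aut(\cstE_{\overline{K}})$ can be non-abelian of order~$12$ (for $j=0$ in characteristic~$3$) or~$24$ (for $j=0$ in characteristic~$2$), and where in characteristic~$2$ the relevant twists are classified by Artin--Schreier rather than Kummer cohomology. Rather than push the uniform cocycle argument, I would fall back to an explicit case analysis: for each admissible pair $(p, j)$, pick a Weierstrass representative with good reduction, write down all isomorphism classes of ramified twists, and run Tate's algorithm on each to extract the Kodaira type and the component group. This finite verification should suffice to establish the bounds (a)--(c).
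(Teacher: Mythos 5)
Parts (1) and (2) of your proposal are correct and essentially identical to the paper's proof (multiplicative reduction is excluded because $v(j(E))\geq 0$; the quotient $E_0(F_v)/E_1(F_v)$ is torsion-free in characteristic $0$ and killed by $p$ in characteristic $p$, which pins $m(v)$ to $d$, $dp$ or $\infty$). For (3) and the tame regime of (4) (residue characteristic $0$ or $\geq 5$) your route via the classification of fibre types, respectively via ramified twists of a constant curve $\cstE$ of good reduction, also works. The first genuine gap is that the lemma is stated over an arbitrary field $K$, so the residue field at $v$ may be imperfect (e.g.\ $K=\FF_3(s)$, a case the paper really uses, cf.\ Example~\ref{ex:inseparable_example}); the Kodaira--N\'eron/Tate classification you invoke in (3), and the Tate's-algorithm computations you propose in (4), presuppose a perfect residue field. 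Over imperfect residue fields additional fibre types occur, and one must check their component groups separately; the paper does this by citing Szydlo's extension of the classification \cite{Szydlo_reduction_types} and observing that all the new types have component group of order at most $2$. Your proposal never mentions this, so as written (3) and (4) are only established for perfect $K$.

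The second gap is that part (4) in residue characteristics $2$ and $3$ --- which is exactly where the content of (a) and (b) lies (no element of order $4$ when $p=3$, no element of order $3$ when $p=2$ and $j\neq 0$) --- is deferred to a ``finite verification'' that is not finite as described and is not carried out. Wild twists are parametrized by infinite sets (quadratic twists in characteristic $2$ by an Artin--Schreier quotient of $F_v$; for $j=0$ in characteristic $3$ by a nonabelian cohomology set with $\Aut(\cstE_{\overline{K}})$ of order $12$), your tame dictionary (order $2\mapsto I_0^*$, order $4\mapsto III,III^*$, etc.) is precisely what fails there, and organizing the twists into finitely many normal forms and running a version of Tate's algorithm valid over the given residue field amounts to re-deriving the table facts the paper simply quotes: type $I_n^*$ with $n\geq 1$ forces residue characteristic $2$ or $v(j)<0$, and types $IV$, $IV^*$ force residue characteristic $3$ or $v(j)>0$; combined with $v(j(E))\geq 0$ for constant $j$ these give (a) and (b) at once, and (c) follows. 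So your plan is workable in outline, but the decisive wild cases of (4), and the imperfect-residue-field issue throughout, are asserted rather than proved.
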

\begin{proof}
	(\ref{it:constant}) As $j(E)\in K$, we have $v(j(E))\geq 0$,
	hence $E$ does not have
	multiplicative reduction at~$v$
	by \cite[Proposition VII.5.1(b) and $j=c_4^3/\Delta$]{Silverman_arithmetic}.	
	
	(\ref{it:additive1})
Note that $m(v)$ is the order of $P$ in $E(F_v)/E_1(F_v)$.
In the additive case, the subgroup $E_0(F_v)/E_1(F_v)$ is isomorphic
to the additive group underlying the residue field of~$v$.
If $p=0$, then the latter group is torsion-free, so that $m(v)$ is $\infty$
or $d$.
If $p>0$, then the latter group has exponent~$p$, so that $m(v)$ is
$d$ or $dp$.

Write $c = \#E(F_v)/E_0(F_v)$, so $d\mid c$.
For parts (\ref{it:additive2}) and (\ref{it:const}), 
we will use tables of reduction types to find
restrictions on $c$, hence on~$d$.

If $K$ is perfect, then the reduction types
were classified by Kodaira and N\'eron and
can be found in \cite[Table 4.1 in \S IV.9]{Silverman_book2}
(equivalently \cite[Table 15.1 in Appendix~C]{Silverman_arithmetic}).
% DETAILS to keep in comments:
% Both tables explicitly use the notation E(...) / E_0(...).
For general fields~$K$, we need the generalization
by Szydlo~\cite[Theorem~3.1 and Proposition~7.1.1]{Szydlo_reduction_types}.
% DETAILS to keep in comments:
% Szydlo Proposition 7.1.1 says that the Néron model is the smooth part of
% the flat proper regular minimal model.
% Szydlo Theorem 3.1 says that the special fibre of the flat proper regular minimal model
% is in Kodaira's list (6) or in Section 3 and the new list (7).
% Combine this with the fact that E/E_0 is really the component group, as
% recalled above.
All types that are in Szydlo's classification
and were not already in the Kodaira-N\'eron classification
have $c\in\{1,2\}$
by \cite[(20) on page~96]{Szydlo_reduction_types},
so we may assume that we are in one
of the cases from the Kodaira-N\'eron classification.

	(\ref{it:additive2}) In the additive reduction case, we get $\#E(F_v)/E_0(F_v)=c$
	with $c\in \{1,2,3,4\}$ by \cite[Table~4.1]{Silverman_book2}.
	
	(\ref{it:consta}) Suppose first that $K$
	is perfect.
	If $c=4$, then the reduction type is $I_n^*$,
	hence by the bottom part of \cite[Table~4.1]{Silverman_book2},
	we get $\mathrm{char}(K)=2$
	or $v(j(E))<0$.
	For general fields Theorem~5.1 and
	Tables 1 and~4 of \cite{Szydlo_reduction_types}
	give the same result.
	% DETAILS TO KEEP IN LATEX COMMENTS:
	% Table 1 characteristic not 2 or 3, we get
	% (Szydlo's d in the table is delta, the discriminant)
	% v(a1) >= 1, v(a2) = 1, v(a3) >= 2, v(a4) >= 3, v(a6) >= 4, v(d) >= 7
	% hence
	% v(b2) = 1
	% v(b4) >= 3 (a2 does not occur in the definition of b4).
	% hence
	% v(c4) = 2
	% v(j) = 3v(c4)-v(d) <= 6-7 = -1 < 0.
	% Table 4 characteristic 3, we get
	% v(a2) = 1, v(a4) >= 3, v(a6) >= 4, v(d) >= 7
	% The table omits a1 and a3, and Szydlo's thesis page 20 says why:
	% "... we will complete the square and assume a1=a3=0."
	% Then
	% v(b2) = 1, v(b4) >= 3, v(c4) = 2, v(j) <= 6-7 = -1 < 0.
	
    (\ref{it:constb}) In the same way,
	the case $c=3$ only happens
	when $\mathrm{char}(K)=3$ or $v(j(E))>0$.
	Indeed, the reduction type is $IV$ or $IV^*$,
	the same reference works in the perfect case,
	and in the general case one needs Table~5 in~\cite{Szydlo_reduction_types}
	instead of Table~4.
	% DETAILS TO KEEP IN LATEX COMMENTS:
	% Table 1 characteristic not 2 or 3, we get
	% (Szydlo mentions X^2 + a3/pi *X + a6/pi^2,
	%  but the a6 should be -a6 as in (8) on page 12
	%  of Szydlo's thesis, so that the discriminant is really
	%  b6/pi^2, consistently with Table 1.
	%  When Szydlo mentions v(b6)=4 (resp 8), he means 2 (resp 4).
	% So really as in the table:
	% v(d) = 2v(c6) is not divisible by 3
	% 3v(c4) = v(1728d + c6^2) >= v(d), which is not divisible by 3, hence
	% 3v(c4) > v(d), hence
	% v(j) = 3v(c4)-v(d) > 0.
	% Table 5 characteristic 2:
	% case IV:
	% v(a1) >=1, v(a2) >= 1, v(a3) = 1, v(a4) >= 2, v(d) = 4
	% hence
	% v(b2) >= 1, v(b4) >= 2, v(c4) >= 2,
	% so v(j) = 3v(c4)-v(d) >= 6 - 4 = 2 > 0
	% case IV*:
	% v(a1) >=1, v(a2) >= 2, v(a3) = 2, v(a4) >= 3, v(d) = 8
	% hence
	% v(b2) >= 2, v(b4) >= 3, v(c4) >= 3,
	% so v(j) = 3v(c4)-v(d) >= 9 - 8 = 1 > 0
	
	Combining \eqref{it:consta} with \eqref{it:constb} gives~\eqref{it:constc}.
\end{proof}

\subsection{\texorpdfstring{Relating $(\dn)_n$ with $(\dnpr)_n$}{Relating Dn with Dnprime}}

To prove our main results, we link the EDS $(\dn)_n$ obtained
from $(E,P)$ to the EDS $(\dnpr)_n$ obtained from $(\cstE, P')$.
Let $v'$ be a valuation of $F'$ lying over a valuation $v$
of $F$.
\begin{lemma}\label{lem:constj}
	Let the notation be as in Notation~\ref{not:Fprime}
	and suppose that $P$ has infinite order.
	
	If $E$ does not have additive reduction at $v$, then
	we have 
	$v'(\dmpr{m}) = v'(\dm{m})$ for all~$m\in\ZZ_{>0}$.
\end{lemma}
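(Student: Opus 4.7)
The plan is to reduce the claim to the observation that, at a place of good reduction, the isomorphism $\phi$ is---up to replacing the Weierstrass models by minimal ones over the local ring at $v'$---given by an \emph{integral} change of coordinates, so that the local contributions to the two EDS coincide verbatim.

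First, I would settle the reduction types. By Lemma~\ref{lem:additive}(\ref{it:constant}) and the hypothesis of the lemma, $E$ has good reduction at $v$, whence $E_{F'_{v'}}$ has good reduction at~$v'$. On the other side, $\cstE$ is defined over the constant field $K\subset K'$, and $v'$ is a place of $C'$ over $K'$ with $v'|_{K'}=0$, so $\cstE_{F'_{v'}}$ also has good reduction at~$v'$. In particular, the $v'$-minimal Weierstrass models of both curves satisfy $v'(\Delta)=0$ and give the N\'eron model.

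Next, I would promote $\phi$ to an integral isomorphism between these two minimal models. Since both curves have good reduction at $v'$, their N\'eron models over $\mathcal{O}_{v'}$ are the corresponding smooth Weierstrass models, and the N\'eron mapping property extends $\phi$ and $\phi^{-1}$ to $\mathcal{O}_{v'}$-morphisms between them. The resulting isomorphism of smooth Weierstrass models is given by a change of variables
\[
(x,y)\mapsto(X,Y)=(u^2 x+r,\ u^3 y+u^2 s x+t)
\]
with $u\in\mathcal{O}_{v'}^{\times}$ and $r,s,t\in\mathcal{O}_{v'}$.

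For the final computation, set $Q=mP$ and $Q'=mP'=\phi(Q)$. If $v'(x(Q))\geq 0$, then $Q$ reduces to a non-$O$ point on the special fibre of~$E$, so $v'(\dm{m})=0$; the integral isomorphism sends $O$ to $O$, so $Q'$ also does not reduce to $O$ and $v'(\dmpr{m})=0$. Otherwise $v'(x(Q))<0$, and then using $v'(u)=0$ and $v'(r)\geq 0$ we obtain
\[
v'(X(Q'))=v'(u^2 x(Q)+r)=v'(x(Q)),
\]
so that formula~\eqref{eq:defedseasy} gives $v'(\dmpr{m})=\max\{-v'(X(Q'))/2,0\}=\max\{-v'(x(Q))/2,0\}=v'(\dm{m})$.

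I expect the main obstacle to be the integrality of $(u,r,s,t)$ in the second step. The identity $\Delta_E=u^{\pm 12}\Delta_{\cstE}$ together with $v'(\Delta_E)=v'(\Delta_{\cstE})=0$ makes $v'(u)=0$ immediate, but the integrality of $r,s,t$ is not evident from the explicit transformation formulas, which involve division by the primes $2$ and $3$ that may coincide with the residue characteristic of~$v'$. The argument must therefore proceed either through the N\'eron mapping property as indicated above, or equivalently through the uniqueness of a minimal Weierstrass model over a local ring up to admissible changes of variables.
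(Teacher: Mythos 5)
Your proposal is correct and follows essentially the same route as the paper: the paper's proof simply notes that $\cstEFpr$ has good reduction everywhere, that $E$ has good reduction at $v$ (by Lemma~\ref{lem:additive}\eqref{it:constant} plus the hypothesis), and that the isomorphism is therefore defined over the local ring at $v'$ and so does not change the valuation of $x(mP)$. Your expansion of that last step --- the integral change of variables with $u\in\mathcal{O}_{v'}^{\times}$ and $r,s,t\in\mathcal{O}_{v'}$, justified via minimality (cf.\ \cite[VII.1.3(d)]{Silverman_arithmetic}) or the N\'eron mapping property --- is exactly what the paper leaves implicit.
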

\begin{proof}
	Note that $\cstEFpr$ has good reduction at all valuations of~$F'$.
	Suppose that $E$ does not have additive reduction at $v$.
	As $j(E)\in K$, we find that $E$ also has good reduction at $v$,
	hence the isomorphism $E_{F'}\rightarrow \cstEFpr$
	is an isomorphism over the local ring at~$v'$,
	which does not affect the valuation of $x(mP)$.
\end{proof}

\begin{example}\label{ex:sscontinued}
	We continue Example~\ref{ex:ss},
	so $E: y^2 = x^3 + tx - t$ and $P=(1,1)$.
	In that example, we saw that the EDS $(\dnpr)_n$
	is $3$ times the EDS of Example \ref{ex:ssconst},
	with $X=v$, $Y=u$, $u^2 = v^3 + v$ and $t = u^{-4}$.

	We compute the difference $\dnpr-\dn$
	for the first few terms.
	To help in this computation, note the following identities.
	\begin{align*}
	\div_0(t) &= \div_0(u^{-4}) = 12(O),\\
	\div_{\infty}(t) &= 4\div_0(u),\qquad\mbox{where}\qquad
	\div_0(u) = (0,0) + (i,0) + (-i,0),\\
	\intertext{and if $p$ is a polynomial
		with $p(0)\not=0$ and $p^*$ is its reciprocal, then}
	\div_0(p(t)) &= \div_0(p^*(u^4)) = \div_0(p^*((v^3+v)^2)).
	\end{align*}
	We obtain
	\begin{align*}
	\dmpr{1}-\dm{1} & & &= 3(O)\\
	\dmpr{2}-\dm{2} & & &= 3(O) + 3\div_0(u)\\
	\dmpr{3}-\dm{3} & =  27(O)-2\div_0(t) & & = 3(O)\\
	\dmpr{4}-\dm{4} & = 
	3(O) + 3\div_0(u) \\
 +\ & 3\div_0((v+1)(v+2)(v^2+v+2)(v^2+2v+2))\!\!\!\!\!\!\!\!\!\\
 +\ & -3\div_0(1+2u^4) &
	&= 3(O) + 3\div_0(u)\\
	\dmpr{5}-\dm{5} &= \\
	& \div_0((v^{4} + v + 2)  (v^{4} + 2 v + 2)  (v^{4} + v^{2} + 2))\\
	+\ &
	- \div_0(1+u^4+2u^8) & 
	&= 3(O)\\
	\dmpr{6}-\dm{6} &=27(O) + 27\div_0(u) - 2\div_0(t)-3\div_\infty(t) &
	&= 3(O)+15\div_0(u)
	\\
	\dmpr{7}-\dm{7} & & & = 3(O) \\
	\dmpr{9}-\dm{9} &= 243(O) -20\div_0(t) & & = 3(O) \\
	\dmpr{12}-\dm{12} &= 
	\cdots & & = 
	3(O) + 15\div_0(u).
	\end{align*}
	The difference is indeed only in the valuations lying
	over the places $t=0$ and $t=\infty$ of additive reduction of $E$.
\end{example}

The following lemma shows how much the primitive valuations
of the sequence~$(\dnpr)_n$ can be ``postponed'' to later terms of $(\dn)_n$. 

\begin{lemma}\label{lem:additivemv}
	Let $K$, $F$, $E$, $P$, $v$, $P'$ and $v'$ be as in Notation~\ref{not:Fprime}.
	Suppose that $P$ has infinite order and that $E$ has
	additive reduction at~$v$.
	Let $d=d_v$ be the order of $P$ in 
	the component group
	$E(F_v)/E_0(F_v)$.
	
	Let $m = m(P,v)$ and $m'=m(P',v')$ be the ranks
	of apparition of the valuations $v$ and $v'$
	in the elliptic divisibility sequences associated to $P$ and~$P'$.
	\begin{enumerate}
	\item
	\label{it:additivemv1} 
	We have $m'\mid d$.
	\item\label{it:additivemv2} If $K$ has characteristic $0$, then $m = d$ or $m(v)=\infty$.
	\item\label{it:additivemv3} If $K$ has characteristic $p>0$, then $m = d$ or $m = dp$.
	\end{enumerate}
\end{lemma}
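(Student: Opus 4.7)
Parts (\ref{it:additivemv2}) and (\ref{it:additivemv3}) are immediate: they are exactly the conclusion of Lemma~\ref{lem:additive}(\ref{it:additive1}) applied to $(E,P)$ at the additive valuation~$v$. The real content of the lemma is part~(\ref{it:additivemv1}), i.e., the divisibility $m'\mid d$, equivalently that $dP'$ reduces to~$O$ on $\widetilde{E}$ modulo~$v'$.

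The plan is to transport the information $dP\in E_0(F_v)$ into a statement about the reduction of $dP'$ via the N\'eron mapping property. Let $\mathcal{E}/\mathcal{O}_{F_v}$ be the N\'eron model of~$E$, so that $E_0(F_v)=\mathcal{E}^0(\mathcal{O}_{F_v})$ and, in the additive case, the identity component $\mathcal{E}^0_{\kappa(v)}$ of the special fibre is a $\kappa(v)$-form of $\mathbf{G}_a$. Since $\widetilde{E}$ is defined over~$K$, it has good reduction at~$v'$, so its N\'eron model $\widetilde{\mathcal{E}}/\mathcal{O}_{F'_{v'}}$ is an elliptic scheme. The base change $\mathcal{E}\otimes_{\mathcal{O}_{F_v}}\mathcal{O}_{F'_{v'}}$ is a smooth group scheme whose generic fibre is identified with $\widetilde{E}_{F'_{v'}}$ via~$\phi$, so the N\'eron mapping property of $\widetilde{\mathcal{E}}$ supplies a unique morphism
\[
\pi\colon\mathcal{E}\otimes_{\mathcal{O}_{F_v}}\mathcal{O}_{F'_{v'}}\longrightarrow\widetilde{\mathcal{E}}
\]
extending~$\phi$; since it agrees with the group law on the schematically dense generic fibre and $\widetilde{\mathcal{E}}$ is separated, $\pi$ is a homomorphism of group schemes.

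Restricting $\pi$ to the identity component of the special fibre yields a group scheme morphism from a form of $\mathbf{G}_a$ over $\kappa(v')$ into the elliptic curve $\widetilde{E}_{\kappa(v')}$. After passing to an algebraic closure this is a morphism $\mathbf{A}^1\to\widetilde{E}$, which extends by the valuative criterion to $\mathbf{P}^1\to\widetilde{E}$ and is necessarily constant for reasons of genus; being a group homomorphism, the restriction is therefore constantly~$O$. Since $dP$ lies in $\mathcal{E}^0(\mathcal{O}_{F_v})\subseteq\mathcal{E}^0(\mathcal{O}_{F'_{v'}})$, its image $\pi(dP)=dP'$ reduces modulo~$v'$ to~$O$, so $dP'\in\widetilde{E}_1(F'_{v'})$ and $m'\mid d$.

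The main obstacle is to execute this N\'eron-theoretic argument for the potentially wildly ramified or even inseparable extensions $F'_{v'}/F_v$ produced in Lemma~\ref{lem:Fprime}: one needs the N\'eron mapping property in full generality (as available in Bosch--L\"utkebohmert--Raynaud) and must keep track of the identity component under such a base change. Smoothness of $\mathcal{E}\otimes\mathcal{O}_{F'_{v'}}$ is preserved under arbitrary base change, so the input hypotheses of the mapping property remain satisfied uniformly in~$K$.
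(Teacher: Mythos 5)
Your proof is correct, and for the key part~(\ref{it:additivemv1}) it takes a genuinely different route from the paper. Both arguments reduce to the same claim, namely that $\phi$ sends $E_0(F_v)$ (or at least the point $dP\in E_0(F_v)$) into the kernel of reduction $\widetilde{E}_1(F'_{v'})$; parts (\ref{it:additivemv2}) and (\ref{it:additivemv3}) are handled identically in both (they are just Lemma~\ref{lem:additive}(\ref{it:additive1})). The paper proves the claim by a hands-on Weierstrass computation: it writes the coordinate change $x=u^2x'+r$, $y=u^3y'+u^2sx'+t$ with $u,r,s,t\in\mathcal{O}_{v'}$ and $v'(u)>0$, reduces the (non-minimal) model modulo $v'$ to $y^2-2sxy=x^3+s^2x^2$, and observes that the only point of the reduction with $x\equiv r$ is the singular point, so every point of good reduction has $v'\bigl(x'(\phi(Q))\bigr)<0$. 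You instead invoke the N\'eron mapping property to extend $\phi$ to a group-scheme homomorphism $\pi$ from the base change $\mathcal{E}\otimes_{\mathcal{O}_{F_v}}\mathcal{O}_{F'_{v'}}$ to the (good-reduction) N\'eron model of $\widetilde{E}$, and kill the unipotent identity component of the special fibre because an elliptic curve admits no nonconstant homomorphism from a form of $\mathbf{G}_a$. Your remark that one should use the base change of $\mathcal{E}$ (smooth, hence an admissible source for the mapping property) rather than worry about whether it remains a N\'eron model over the possibly wildly ramified or inseparable extension $F'_{v'}/F_v$ is exactly the right point, and the argument is uniform in the residue field, perfect or not. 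What each approach buys: the paper's computation is elementary and self-contained (only Silverman's Table~III.1.2), whereas yours isolates the structural reason and generalizes immediately (e.g.\ to abelian varieties, or to any target with good reduction), at the cost of the N\'eron-model machinery. The remaining details in your write-up are routine but worth acknowledging: the identification $E_0(F_v)=\mathcal{E}^0(\mathcal{O}_{F_v})$ (the paper cites Silverman~IV.9.2 and Conrad for this), the fact that the reduction of a section of $\mathcal{E}^0$ stays in the identity component after base change (geometric connectedness of identity components), and the identification of $\pi$ composed with the section $dP$ with the section of $\widetilde{\mathcal{E}}$ attached to $dP'$, which follows from agreement on the generic fibre and separatedness.
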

\begin{proof}
	Parts \eqref{it:additivemv2} and~\eqref{it:additivemv3}
	are exactly part \eqref{it:additive1} of Lemma~\ref{lem:additive}.
    
    It remains to prove \eqref{it:additivemv1}.
	After base-changing to $F'_{v'}$,
	we get a Weierstrass equation $\cstE$ over $K$.
	As $\cstE$ is defined over~$K$, it has good reduction
	at~$v'$.
	We have an isomorphism $\phi : E_{F'}\rightarrow \cstEFpr$.
	Claim: $\phi(E_0(F_{v}))\subset (\cstEFpr)_1(F'_{v'})$.
	Assuming the claim, we get $dP'=\phi(dP)\in (\cstEFpr)_1(F'_{v'})$,
	hence $v(\dmpr{d})>0$, hence $m'\mid d$.
	So in order to prove \eqref{it:additivemv1},
	it suffices to prove the claim.
	
	Proof of the claim.
	By \cite[VII.1.3(d)]{Silverman_arithmetic}
	there are $u$, $r$, $s$, $t\in\mathcal{O}_{v'}$ with $u\not=0$
	such that for all $Q=(x,y)\in E(F'_{v'})$ and $(x',y')=\phi(Q)$:
	\begin{align*}x &= u^2x' + r,\qquad\mbox{and}\qquad y = u^3 y' + u^2sx' + t.
	\end{align*}
	In fact, we have $v'(u)>0$ as otherwise
	$E$ has good reduction already with its model over~$F_v$.
	
	It now suffices to show that for points $Q$ of good reduction
	(i.e., inside $E_0(F_{v})$),
	we have $x(Q)\not\equiv r$ modulo $v$.
	Using a translation of the coordinates $x$ and $y$ of $E$
	by the elements $r$ and $t$ of $\mathcal{O}_{v'}$ we may
	assume without loss of generality that $r=t=0$
	(but now $E$ is given by a non-minimal Weierstrass equation
	over $F'_{v'}$ and $Q\in E(F'_{v'})$).
	As we have $v'(u)>0$, we find from \cite[Table~III.1.2]{Silverman_arithmetic}
	that
	$a_1 \equiv -2s$, $a_2 \equiv s^2$,
    $a_3 \equiv a_4\equiv a_6\equiv 0$,
    so the reduction of our model of $E$ modulo $v'$ is $y^2 - 2sxy = x^3 +s^2x^2$,
    The only point with $x=0$ is the singular point $(0,0)$,
    so $x(Q)\not\equiv 0$ modulo $v'$.
    This proves the claim.
\end{proof}

\begin{example}\label{ex:sscontinuedagain}
	In Example \ref{ex:sscontinued}
	the valuations of $F$ at which $E$ has additive
	reduction are $t=0$ and $t=\infty$, corresponding
	respectively to $O$ and $\div_0(u)$ of $C'$.

%   COMPUTATION TO KEEP IN COMMENTS:
%	P<t>:=FunctionField(GF(3));
%	E:=EllipticCurve([0,0,0,t,-t]);
%	P:=E![1,1];
%	LocalInformation(E);
%	[ <(t), 3, 3, 1, II, true>, <(1/t), 9, 2, 2, III*, true> ]
	
	The reduction at $t=0$ is of type $II$, hence the component
	group there has order~$2$.
	As the point $P$ does not reduce to the singular point,
	we have $d=1$.
	In the sequence, we see $m=m(P,v) = 3 = p$ and $m'=m(P',v')=1=d$.
	
	The reduction at $t=\infty$ is of type $III^*$, hence
	the component group has order~$2$.
	As the point $P$ reduces to the singular point, we have
	$d=2$.
	In the sequence, we see $m=m(P,v) = 6 = d p$ and $m'=m(P',v')=2=d$.
	
	In both cases, we have $m'=m(P',v')\leq 2 \leq 4$.
\end{example}

\begin{proposition}\label{prop:hasvaluation}
	Let $F$, $E$, $P$ be as in Notation~\ref{not:Fprime}.
	Suppose that $P$ has infinite order and that $(E,P)$ is non-constant.
	Let $n$ be a positive integer.
	If $E$ is supersingular, assume that $\mathrm{char}(F)\nmid n$.
	Then
	\begin{enumerate}\item $\dn$ has a primitive valuation \emph{or}
		\item there is a valuation
	$v$ of $F$ such that $n$ divides the order $d_v$ of
	$P$ in the component group $E(F_v)/E_0(F_v)$.	
	\end{enumerate}	
\end{proposition}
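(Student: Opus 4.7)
The plan is to transfer the problem to the companion EDS $(D_{nP'})_n$ attached to the constant curve $\widetilde{E}/K$ via Notation~\ref{not:Fprime}, apply Theorem~\ref{thm:constE}, and then descend a primitive valuation back to $C$ using the good/additive dichotomy developed earlier in Section~\ref{sec:relating_constant_E_to_constant_j_locally}.

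I would first verify that $(\widetilde{E}, P')$ satisfies the hypotheses of Theorem~\ref{thm:constE}. Since $P$ has infinite order (in particular, order $\geq 4$) and $(E,P)$ is non-constant, Corollary~\ref{cor:whenconst} gives $P' \notin \widetilde{E}(\overline{K})$; combined with $P'$ being of infinite order and $j(\widetilde{E}) = j(E)$, the pair $(\widetilde{E}, P')$ is non-constant with $\widetilde{E}$ supersingular precisely when $E$ is. The hypothesis ``$\mathrm{char}(F) \nmid n$ in the supersingular case'' therefore transfers verbatim to $(\widetilde{E}, P')$, so part~(1) of Theorem~\ref{thm:constE} yields a primitive valuation $v'$ on $C'$ for the term $D_{nP'}$.

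Next I would set $v := v'|_F$. Because $F'/F$ is a finite algebraic extension (of degree $\deg(f)\cdot [K':K]$) and $v'$ is non-trivial, $v$ is a non-trivial discrete valuation of $F$, trivial on $K$, so corresponds to a closed point of $C$. By Lemma~\ref{lem:additive}(\ref{it:constant}), $E$ has either good or additive reduction at $v$, and I split on this dichotomy. If the reduction is good, Lemma~\ref{lem:constj} gives $v'(D_{mP}) = v'(D_{mP'})$ for every $m \geq 1$; primitivity of $v'$ in $(D_{mP'})_m$ then yields $v'(D_{nP}) > 0$ and $v'(D_{mP}) = 0$ for $m < n$, and since $v'$ extends $v$ with positive ramification index $e$, the identity $v'(D_{mP}) = e\cdot v(D_{mP})$ forces the same vanishings for $v$, making $v$ itself a primitive valuation of $D_n$. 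If instead the reduction is additive, Lemma~\ref{lem:additivemv}(\ref{it:additivemv1}) gives $m(P',v') \mid d_v$, and since $m(P',v') = n$ by primitivity of $v'$, we conclude $n \mid d_v$, which is the second alternative in the proposition.

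The only place that needs a moment of care is the good-reduction half of Step~3: namely, making explicit that $v'(D_{mP}) = 0$ forces $v(D_{mP}) = 0$ via the positive ramification index $e(v'/v)$. Once that is in hand, the argument is a direct assembly of Corollary~\ref{cor:whenconst}, Theorem~\ref{thm:constE}, and Lemmas~\ref{lem:constj} and~\ref{lem:additivemv}(\ref{it:additivemv1}).
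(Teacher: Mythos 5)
Your proof is correct and follows essentially the same route as the paper: take a primitive valuation $v'$ of $D_{nP'}$ from Theorem~\ref{thm:constE} (the non-constancy of $P'$ being exactly the remark following Corollary~\ref{cor:whenconst}), restrict it to $v$ on $F$, and split via Lemma~\ref{lem:additive}\eqref{it:constant} into the good-reduction case (Lemma~\ref{lem:constj}, giving $m(P,v)=n$) and the additive case (Lemma~\ref{lem:additivemv}\eqref{it:additivemv1}, giving $n\mid d_v$). Your extra remark about the ramification index in the good-reduction step is a harmless elaboration of what the paper compresses into the equality $m(P',v')=m(P,v)$.
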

\begin{proof}
	Let $v'$ be a primitive valuation of $\dnpr$,
	which exists by Theorem~\ref{thm:constE}.
	Let $v$ be the restriction of $v'$ to $F$.
	Then $E$ has good or additive reduction at $v$ by Lemma~\ref{lem:additive}\eqref{it:constant}.
	If $E$ has good reduction, then $n=m(P',v')=m(P,v)$ by Lemma~\ref{lem:constj}.
	If $E$ has additive reduction, then $n=m(P',v')\mid d_v$
	by Lemma~\ref{lem:additivemv}\eqref{it:additivemv1}.
\end{proof}

As we have $d_v\leq 4$ by Lemma~\ref{lem:additive}\eqref{it:additive2},
we get the following result.
\begin{theorem}\label{thm:constjweak}
	Let $E$ and $P$ be as in Notation~\ref{not:Fprime}.
	Suppose that $E$ is ordinary, that $P$ has infinite order,
	and that $(E,P)$ is non-constant.
	Then for all $n>4$,
	the term $\dn$ has a primitive valuation.\qed
\end{theorem}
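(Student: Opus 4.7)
The plan is to deduce Theorem~\ref{thm:constjweak} directly from Proposition~\ref{prop:hasvaluation} combined with the bound on component-group orders from Lemma~\ref{lem:additive}. The argument is essentially a contradiction: I would assume $\dn$ has no primitive valuation for some $n>4$ and derive an upper bound on $n$ that contradicts $n>4$.

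More concretely, first I would note that since $E$ is ordinary, the supersingular hypothesis in Proposition~\ref{prop:hasvaluation} is vacuous, so the proposition applies unconditionally for every positive integer~$n$. Suppose for contradiction that $\dn$ has no primitive valuation. Then by Proposition~\ref{prop:hasvaluation} there exists a valuation $v$ of $F$ such that $n\mid d_v$, where $d_v$ is the order of $P$ in the component group $E(F_v)/E_0(F_v)$. Next I would invoke Lemma~\ref{lem:additive}\eqref{it:constant}: since $j(E)\in K$ (built into Notation~\ref{not:Fprime}), the curve $E$ has either good or additive reduction at~$v$. In the good reduction case, $P\in E_0(F_v)=E(F_v)$, so $d_v=1$; in the additive reduction case, Lemma~\ref{lem:additive}\eqref{it:additive2} gives $d_v\leq 4$. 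Either way, $d_v\leq 4$, so $n\mid d_v$ forces $n\leq 4$, contradicting the assumption $n>4$. Hence $\dn$ must have a primitive valuation.

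Since this proof is a one-step combination of results that are already in place, there is no serious obstacle to overcome here; the real content was packed into proving Proposition~\ref{prop:hasvaluation} and the bound $d_v\leq 4$ in Lemma~\ref{lem:additive}\eqref{it:additive2}. The only thing to be careful about is to double-check that the hypothesis ``$(E,P)$ is non-constant'' in Theorem~\ref{thm:constjweak} lines up with the corresponding hypothesis in Proposition~\ref{prop:hasvaluation}, and that the ordinary assumption on $E$ removes the characteristic-divisibility caveat, both of which are immediate from the setup. The statement is purely a weak, uniform version of the main theorem, to be sharpened in later sections (notably to handle $n=3,4$ and the boundary behaviour tied to the component-group structure and the $j=0$ case).
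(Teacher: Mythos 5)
Your proof is correct and is exactly the paper's argument: the paper deduces Theorem~\ref{thm:constjweak} immediately from Proposition~\ref{prop:hasvaluation} together with the bound $d_v\leq 4$ of Lemma~\ref{lem:additive}\eqref{it:additive2} (with the good-reduction case giving $d_v=1$ trivially), which is precisely your one-step contradiction.
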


\begin{proposition}\label{prop:hasnovaluation}
	Let $E$ be a supersingular elliptic curve over $F$.
	Let $P\in E(F)$ be a point of infinite order.
	Suppose that $(E,P)$ is non-constant.
	Let $n$ be a positive integer.
	Then
	\begin{enumerate}\item $\dm{np}$ has no primitive valuation \emph{or}
		\item there is a valuation
	$v$ of $F$ such that $n$ divides the order $d_v$ of
	$P$ in the component group $E(F_v)/E_0(F_v)$.
	\end{enumerate}
\end{proposition}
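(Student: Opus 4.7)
The plan is to argue contrapositively: assuming $D_{np}$ admits a primitive valuation $v$, I will deduce that $n\mid d_v$. Since $E$ is supersingular, $\charac F=p>0$ and $j(E)\in\overline{\FF}_p\cap F=K$, using the identity $\overline{K}\cap F=K$ justified via geometric irreducibility in the proof of Lemma~\ref{lem:constconst}. The setting of Notation~\ref{not:Fprime} therefore applies, producing $\cstE/K$, $F'/F$ and $\phi:E_{F'}\to\cstEFpr$; the curve $\cstE$ is again supersingular since $j(\cstE)=j(E)$, and Corollary~\ref{cor:whenconst} (applicable because $P$ has order $\geq 4$) shows that $P'=\phi(P)\in\cstE(F')$ is a non-constant point, so that Theorem~\ref{thm:constE} is available for $(\cstE,P')$.

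Fix a place $v'$ of $F'$ lying over $v$. By Lemma~\ref{lem:additive}\eqref{it:constant}, $E$ has either good or additive reduction at $v$. I will dispose of the good-reduction case by contradiction. In that case, Lemma~\ref{lem:constj} yields $v'(D_{mP'})=v'(D_{mP})$ for every $m$; since $v'(D_{mP})$ is positive precisely when $v(D_{mP})$ is, the primitivity of $v$ for $D_{np}$ upgrades to primitivity of $v'$ for $D_{npP'}$. But Theorem~\ref{thm:constE}(2), applied to the supersingular constant pair $(\cstE,P')$, gives $D_{npP'}=p^2 D_{nP'}$, a divisor with no primitive valuation at all—contradiction.

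Hence the reduction of $E$ at $v$ is additive, and Lemma~\ref{lem:additive}\eqref{it:additive1} in characteristic $p>0$ forces $m(P,v)\in\{d_v,d_vp\}$. Since $v$ being primitive for $D_{np}$ means precisely $m(P,v)=np$, we obtain $d_v=np$ or $d_v=n$; in both cases $n\mid d_v$, as required. The argument is structurally dual to that of Proposition~\ref{prop:hasvaluation}: there the good-reduction branch produced the desired primitive valuation and the additive branch contributed the divisibility $n\mid d_v$, whereas here Theorem~\ref{thm:constE}(2) eliminates the good-reduction branch and the additive branch yields $n\mid d_v$ directly. I do not foresee a genuine obstacle: once Notation~\ref{not:Fprime} and the lemmas of Section~\ref{sec:relating_constant_E_to_constant_j_locally} are in place, the proof reduces to the short matching of exponents sketched above.
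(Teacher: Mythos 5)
Your proof is correct and follows essentially the same route as the paper: treat a putative primitive valuation $v$ of $D_{npP}$, rule out good reduction via Lemma~\ref{lem:constj} combined with Theorem~\ref{thm:constE}(2), and in the additive case use the dichotomy $m(P,v)\in\{d_v,d_vp\}$ (Lemma~\ref{lem:additive}\eqref{it:additive1}, restated as Lemma~\ref{lem:additivemv}\eqref{it:additivemv3}) together with $m(P,v)=np$ to conclude $n\mid d_v$. The paper's proof is the same argument phrased non-contrapositively, so there is nothing to add.
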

\begin{proof}
	If $\dm{np}$ has no primitive valuation, then we are done.
	Otherwise, let $v$ be such a primitive valuation, so $m(P,v) = np$.
	Let $v'$ be an extension of $v$ to~$F'$.
	Then $E$ has good or additive reduction at $v$ by Lemma~\ref{lem:additive}\eqref{it:constant}.
    If $E$ has good reduction, then $m(P',v')=m(P,v)=np$ by Lemma~\ref{lem:constj},
    but that contradicts Theorem~\ref{thm:constE}.
    If $E$ has additive reduction, then Lemma~\ref{lem:additivemv}\eqref{it:additivemv3}
    gives
    $np = m(P,v) \mid d_vp$, so $n\mid d_v$.
\end{proof}

As we have $d_v\leq 4$ by Lemma~\ref{lem:additive}\eqref{it:additive2},
Propositions \ref{prop:hasvaluation} and~\ref{prop:hasnovaluation}
imply the following result.
\begin{theorem}\label{thm:supersingular}
Let $F$, $E$, $P$ be as in Notation~\ref{not:Fprime}.
Suppose that $E$ is supersingular, that $P$ has infinite order,
and that $(E,P)$ is non-constant.
Let $p$ be the characteristic of~$F$.
Then
\begin{enumerate}
	\item for all integers $n>4$ with $p\nmid n$, the term
	$D_n$ has a primitive valuation, and
	\item for all integers $n>4$, the term
	$D_{pn}$ has \emph{no} primitive valuation.\qed
\end{enumerate}
\end{theorem}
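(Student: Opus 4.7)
The plan is to derive Theorem~\ref{thm:supersingular} as a direct corollary of Propositions~\ref{prop:hasvaluation} and~\ref{prop:hasnovaluation}, using the uniform bound $d_v \leq 4$ supplied by Lemma~\ref{lem:additive}\eqref{it:additive2}. Since $j(E)\in K$ in the setup of Notation~\ref{not:Fprime}, Lemma~\ref{lem:additive}\eqref{it:constant} already rules out multiplicative reduction, so at every place $v$ the curve $E$ has either good reduction (with $d_v=1$) or additive reduction (where the bound $d_v\leq 4$ applies). Hence the single inequality $d_v\leq 4$ holds at every valuation we will encounter.

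For part~(1), fix $n>4$ with $p\nmid n$. The hypotheses of Proposition~\ref{prop:hasvaluation} are met (supersingular $E$, $\mathrm{char}(F)\nmid n$, pair $(E,P)$ non-constant, $P$ of infinite order), so the proposition presents a dichotomy: either $D_n$ has a primitive valuation, or some valuation $v$ of $F$ satisfies $n\mid d_v$. The second alternative is incompatible with $n>4\geq d_v$, so the first must hold.

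For part~(2), apply Proposition~\ref{prop:hasnovaluation} to the integer $n>4$. It gives the dichotomy: either $D_{pn}$ has no primitive valuation, or there is a valuation $v$ with $n\mid d_v$. Again $n>4\geq d_v$ forces the first alternative, so $D_{pn}$ carries no primitive valuation.

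There is essentially no obstacle beyond checking that the threshold $n>4$ lines up on both sides: the threshold in the statement of the theorem matches the bound on $d_v$ from the component-group analysis, which is the sharpest uniform estimate obtained in Section~\ref{sec:relating_constant_E_to_constant_j_locally}. Obtaining the sharper statement quoted as Theorem~B (replacing the crude $n>4$ by explicit case distinctions modulo $p$) would require the finer analysis of component groups in Section~\ref{sec:component_groups} and the $j=0$ argument of Section~\ref{sec:the_third_term_when_j_0}, but those refinements are not needed for the weaker Theorem~\ref{thm:supersingular}.
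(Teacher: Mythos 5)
Your proposal is correct and is essentially identical to the paper's own argument: the paper deduces Theorem~\ref{thm:supersingular} in one line from Propositions~\ref{prop:hasvaluation} and~\ref{prop:hasnovaluation} together with the bound $d_v\leq 4$ of Lemma~\ref{lem:additive}\eqref{it:additive2}, exactly as you do. Your closing remark about where the sharper Theorem~B requires the extra input of Sections~\ref{sec:component_groups} and~\ref{sec:the_third_term_when_j_0} also matches the paper's structure.
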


\section{Component groups}\label{sec:component_groups}

In order to sharpen Theorems \ref{thm:constjweak} and~\ref{thm:supersingular} further,
we need to look at the component group.
In this section we derive extra restrictions on
the order $d_v$ of a point in the component group.

By a \emph{local function field}, we mean a completion $K(C)_v$ of
the function field $K(C)$ of a smooth, projective, geometrically irreducible
curve $C$ over a field~$K$ at a discrete valuation~$v$ with $v(K)=\{0\}$ and $v(F)\not=\{0\}$. 
\begin{proposition}\label{prop:char24notinS}
	Let $F$ be a local function field of characteristic~$2$
	with valuation $v$ and constant field~$K$.
	Let $E$ be an elliptic curve over $F$ with $j(E)\in K^*$.
	Then the component group $E(F)/E_0(F)$ does not have an element of order~$4$.
\end{proposition}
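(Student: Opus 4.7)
The plan is to assume for contradiction that there is an element of order~$4$ in $E(F)/E_0(F)$ and to rule this out via the Kodaira--Néron (or Szydlo) classification of reduction types. Since $j(E)\in K^{*}$ we have $v(j(E))=0$, which rules out multiplicative reduction (where $v(j)<0$); good reduction would give a trivial component group, so $E$ must have additive reduction.

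Next, I would observe that $E_0(F)=E(F)\cap E_0(\overline{F})$ (since smoothness of the reduction is preserved under base change to $\overline{F}$), so the natural map $E(F)/E_0(F)\to E(\overline{F})/E_0(\overline{F})$ into the geometric component group is injective; in particular, an element of order~$4$ in the arithmetic component group produces one in the geometric component group. Inspecting the Kodaira--Néron tables (as in the proof of Lemma~\ref{lem:additive}), the only additive reduction type whose geometric component group contains an element of order~$4$ is $I_n^*$ with $n$ odd, where the group is cyclic of order~$4$; the remaining additive types give component groups of order $1$, $2$, $3$ or $(\mathbb{Z}/2)^2$, and Szydlo's additional reduction types all satisfy $c\leq 2$. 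Hence $E$ has reduction of type $I_n^*$ with $n$ odd, so the discriminant of a minimal Weierstrass model satisfies $v(\Delta)=n+6$.

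Finally, I would derive a parity contradiction using a characteristic-$2$ computation of $c_4$. In characteristic~$2$, $b_2=a_1^2+4a_2=a_1^2$ and $c_4=b_2^2-24b_4=b_2^2$, so $c_4=a_1^4$ for any Weierstrass model; hence $v(c_4)=4v(a_1)\in 4\mathbb{Z}$. The relation $j=c_4^3/\Delta$ gives $v(j)=3v(c_4)-v(\Delta)\equiv -v(\Delta)\pmod{12}$, so $0=v(j)\equiv -(n+6)\pmod{12}$, forcing $n\equiv 6\pmod{12}$; in particular $n$ is even, contradicting the conclusion of the previous step.

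The main obstacle is to know that the formula $v(\Delta)=n+6$ and the $\mathbb{Z}/4$ component-group structure for type~$I_n^*$ continue to hold in characteristic~$2$ over a possibly non-perfect constant field~$K$. For this I would appeal to Szydlo's generalization~\cite{Szydlo_reduction_types} of the Kodaira--Néron classification, combined with the fact (already used in the proof of Lemma~\ref{lem:additive}) that the reduction types Szydlo adds beyond Kodaira--Néron have $c\leq 2$ and hence cannot accommodate an element of order~$4$.
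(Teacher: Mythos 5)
Your reduction to type $I_n^*$ with $n$ odd is fine (including the passage through Szydlo's classification and the injection of $E(F)/E_0(F)$ into the geometric component group), and the identity $c_4=a_1^4$ in characteristic~$2$, hence $v(j)=12\,v(a_1)-v(\Delta)$ for a minimal model, is correct and is essentially the same starting point as the paper's computation. The gap is the step ``$v(\Delta)=n+6$ for type $I_n^*$''. That formula is the \emph{tame} one; in residue characteristic~$2$ additive fibres can be wildly ramified, and Ogg's formula gives $v(\Delta)=n+6+\delta$ with $\delta\geq 0$ the wild part of the conductor, which can be positive. A counterexample is already in this paper: in Example~\ref{ex:char2_j_0_order_8} the fibre at $t=\infty$ has type $I_1^*$ but the minimal model $y^2+s^2y=x^3+sx^2$ (with $s=1/t$) has $\Delta=s^8$, so $v(\Delta)=8\neq 1+6$. (That example has $j=0$, so it does not contradict the proposition, but it does destroy the discriminant formula you rely on.) Consequently the congruence $v(j)\equiv-(n+6)\pmod{12}$ is unjustified, and your parity contradiction does not follow: without control of $\delta$ nothing prevents $v(\Delta)$ from being divisible by~$12$.

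To repair the argument you must rule out $v(\Delta)=12\,v(a_1)$ for odd $I_n^*$ in characteristic~$2$ by some other means, and this is exactly what the paper does: it invokes Szydlo's Table~7 to get the valuations of the coefficients of a minimal model for type $I_{2m+1}^*$, namely $v(a_1)\geq 1$, $v(a_2)=1$, $v(a_3)=m+2$, $v(a_4)\geq m+3$, $v(a_6)\geq 2m+4$, and then computes $v(\Delta)$ (the denominator of $j=a_1^{12}/\Delta$) directly, splitting into cases according to which of the terms $a_1^4a_2a_3^2$, $a_1^3a_3^3$, $a_3^4$ dominates; in each case $v(j)$ comes out odd or strictly positive, hence nonzero. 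So the missing ingredient in your proposal is precisely this control of $v(\Delta)$ (equivalently, of the wild conductor) via the coefficient valuations, rather than the tame discriminant formula.
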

\begin{proof}
	Suppose that the component group
	$E(F)/E_{0}(F)$ has an element of order~$4$.
	We will show $v(j(E))\neq0$, which contradicts our assumption that $j(E)$
	is a non-zero constant.
	
	By the tables of reduction types in \cite{Szydlo_reduction_types, Silverman_book2}
	(see the detailed references
	in the proof of Lemma~\ref{lem:additive}\eqref{it:additive1} above),
	if $E(F)/E_{0}(F)$ has an  element of order~$4$, then
	the elliptic curve $E$ has reduction at $v$ of type $I_{n}^*$
	for some $n=2m+1$ with $m\geq 0$.
	By Szydlo~\cite[Table~7]{Szydlo_reduction_types} (see also Theorems 5.1 and 6.1 of loc.~cit.),
	it follows that $E$ has a $v$-minimal Weierstrass model with
	\begin{equation}\label{eq:valuations}
	v(a_1)\geq 1,\quad v(a_2)= 1,\quad v(a_3)=m+2,\quad v(a_4)\geq m+3,\quad v(a_6)\geq 2m+4.
	\end{equation}
	As an alternative reference: under the assumption that $K$
	is perfect, one can also obtain \eqref{eq:valuations}
	from Dokchitser and Dokchitser \cite[Proposition~2]{Dokchitsers_Tate},
	using the fact that (in characteristic~$2$) $b_6=a_3^2$.
	% DETAILS TO KEEP IN COMMENTS:
	% By Dokchitser and Dokchitser~\cite[Proposition~2]{Dokchitsers_Tate} the curve $E$
	% has a $v$-minimal model $$y^2+a_1 xy+a_3 y=x^3+a_2 x^2+a_4 x+a_6$$
	%with $v(a_2)=1$, $v(a_i)\geq \frac{i}{2}+\floor{\frac{i-1}{2}} \frac{n}{2}$
	%and $v(b_6)=n+3$, where (in characteristic~$2$) $b_6 = a_3^2$.
	%The valuation $v$ is discrete  and normalised, i.e.,
	%there exists an element $\pi\in K$ with minimal valuation $v(\pi)=1$.
	%and then \eqref{eq:valuations} follows.
	
	The $j$-invariant equals
	\[j(E)= \frac{a_1^{12}}{\underbrace{a_1^4(a_2 a_3^2 + a_1 a_3 a_4 + a_4^2 + a_1^2 a_6)}_{\alpha}+\underbrace{a_1^3 a_3^3}_{\beta} +\underbrace{a_3^4}_{\gamma} }.\]
	Let $r=v(a_1)$, so $r\geq 1$.
	We find $v(\alpha) = v(a_1^4a_2^{\phantom{2}}a_3^2) = 4r+2m+5$
	as all other terms in $\alpha$ have larger valuation.
	
	Write
	$m=2r-1+A$ for some $A\in\ZZ$.
	It follows that
	\[v(\alpha)=8r+2A+3,\quad v(\beta)=9r+3A+3, \quad v(\gamma)=8r+4A+4.\]
	If $A\geq 0$, then
	\[v(\alpha)<\min\{v(\beta),v(\gamma)\}\]
	and $v(j(E))=4r-2A-3$ is odd, hence non-zero. If $A<0$, then
	\[v(\gamma)<\min\{v(\alpha),v(\beta)\}\]
	and $v(j(E))=4(r-A-1)>0$. 
\end{proof}
\begin{proposition}\label{prop:char33notinS}
	Let $F$ be a local function field of characteristic~$3$
	with valuation~$v$ and constant field~$K$.
	Let $E$ be an elliptic curve over $F$ with $j(E)\in K^{*}$.
	Then the component group $E(F)/E_0(F)$ does not have 
	an element of order~$3$.
\end{proposition}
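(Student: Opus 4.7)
The plan follows the structure of Proposition \ref{prop:char24notinS}. I would assume for a contradiction that the component group $E(F)/E_0(F)$ contains an element of order $3$ and show that this forces $v(j(E)) \neq 0$, contradicting the hypothesis $j(E) \in K^*$.

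First, since $j(E) \in K$ satisfies $v(j(E)) \geq 0$, the curve $E$ cannot have multiplicative reduction at $v$ (types $I_n$ with $n \geq 1$ would give $v(j(E)) = -n < 0$), so $E$ has good or additive reduction. Consulting the reduction-type tables in \cite{Szydlo_reduction_types, Silverman_book2} (as in the proof of Lemma~\ref{lem:additive}\eqref{it:additive1}), the only Kodaira--Néron types whose component group contains an element of order~$3$ are $IV$ and $IV^*$, with $v(\Delta) = 4$ and $v(\Delta) = 8$ respectively on a $v$-minimal Weierstrass model. The additional reduction types introduced by Szydlo \cite[(20), p.~96]{Szydlo_reduction_types} for possibly imperfect residue fields all have component group of order at most $2$, so they are excluded. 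Hence the reduction type of $E$ at $v$ is $IV$ or $IV^*$ and $v(\Delta) \in \{4, 8\}$.

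The key observation specific to characteristic $3$ is that
\[
c_4 \,=\, b_2^2 - 24\, b_4 \,=\, b_2^2,
\]
since $24 \equiv 0 \pmod{3}$. Consequently $v(c_4) = 2\, v(b_2)$ is always an even nonnegative integer. Combined with the standard identity $j(E) = c_4^3/\Delta$ and the assumption $v(j(E)) = 0$, this forces
\[
v(\Delta) \,=\, 3\, v(c_4) \,=\, 6\, v(b_2) \,\in\, 6\, \ZZ_{\geq 0},
\]
which contradicts $v(\Delta) \in \{4, 8\}$ and completes the argument.

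The main obstacle is essentially the same as in Proposition \ref{prop:char24notinS}: justifying the classification of possible reduction types when $K$ is allowed to be imperfect. Once Szydlo's tables are cited, the rest is a very short computation exploiting the characteristic-$3$ collapse $c_4 = b_2^2$, which forces $v(c_4)$ to be even and therefore $v(\Delta)$ to be a multiple of $6$ whenever the $j$-invariant is a $v$-unit.
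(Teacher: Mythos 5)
There is a genuine gap: your claim that a type $IV$ (resp.\ $IV^*$) fibre has $v(\Delta)=4$ (resp.\ $v(\Delta)=8$) on a $v$-minimal model is only valid when the reduction is tame, i.e.\ essentially when the residue characteristic is not $2$ or $3$; this is exactly the caveat attached to the table in \cite{Silverman_book2}. In residue characteristic $3$ the types $IV$ and $IV^*$ can be wildly ramified, and then Ogg's formula $v(\Delta)=f+m-1$ gives $v(\Delta)>4$ (resp.\ $>8$) because the conductor exponent $f$ exceeds $2$. The paper's own Example~\ref{ex:char33} already exhibits this: the curve $y^2=x^3+t^3x+t^4$ over a characteristic-$3$ function field has a type $IV^*$ fibre at $t=0$ with $v(\Delta)=9$, not $8$. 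So the step ``type $IV$ or $IV^*$ $\Rightarrow v(\Delta)\in\{4,8\}$'' is false as stated, and it is precisely the input your contradiction needs.

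Your other observation is correct and genuinely characteristic-$3$ specific: $c_4=b_2^2-24b_4=b_2^2$, so $v(c_4)$ is even and $v(j)=0$ forces $v(\Delta)=3v(c_4)=6v(b_2)\in 6\ZZ$. But this alone cannot be rescued by replacing the false equalities with the true inequalities $v(\Delta)\geq 4$ (type $IV$) and $v(\Delta)\geq 8$ (type $IV^*$): wild conductor exponents would a priori allow $v(\Delta)=6$ for type $IV$ or $v(\Delta)=12$ for type $IV^*$, which your divisibility-by-$6$ argument does not exclude. This is why the paper's proof does not argue through $v(\Delta)$ at all; instead it invokes Szydlo's normal forms \cite[Table~4, Theorems 5.1 and 6.1]{Szydlo_reduction_types}, valid over possibly imperfect residue fields and in wild cases, which pin down the valuations of $a_2$, $a_4$, $a_6$ of a minimal model $y^2=x^3+a_2x^2+a_4x+a_6$ for types $IV$ and $IV^*$, and then computes $v(j)$ directly from the explicit formula for $j$ in characteristic $3$, showing it is strictly positive. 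To repair your argument you would need either this finer coefficient-level information or a different route (e.g.\ that an ordinary $j$-invariant in characteristic $3$ only admits quadratic twists, so only types $I_0$ and $I_0^*$ can occur), neither of which is supplied by the discriminant table you cite.
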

\begin{proof}
	Suppose that the component group $E(F)/E_{0}(F)$
	has an element of order~$3$.
	Then at the valuation $v$ the elliptic curve $E$ has reduction of type $IV$ or $IV^{*}$
	(same reference as in the proof of Proposition~\ref{prop:char24notinS}).
	
	Let $n=1$ for type $IV$ and $n=2$ for type $IV^*$.
	By \cite[Table 4]{Szydlo_reduction_types} (see also Theorems 5.1 and~6.1 of loc.~cit.),
	there exists a minimal model of the form
	$y^2=x^3+a_2 x^2+a_4 x+a_6$
	with
	\begin{equation}\label{eq:valuations2}
	v(a_2)\geq n,\quad v(a_4)\geq n+1,\quad v(a_6)=2n,\quad v(\Delta)\geq 4n.
	\end{equation}
	The $j$-invariant of $E$ is
	\[j(E)=\frac{\overbrace{2a_2^6}^{\delta}}{\underbrace{2a_2^2 a_4^2}_{\alpha}+\underbrace{a_4^3}_{\beta}+\underbrace{a_2^3 a_6}_{\gamma}}.\]
	We will show $v(j(E))\neq 0$, which contradicts our assumption that $j(E)$ is
	a non-zero constant.
	Let $m=v(a_2)-n$ and $l=v(a_4)-2n$, hence $m,l\geq 0$. It follows that $v(\delta)=6m+6n$, $v(\alpha)=2m+2l+4n+2$, $v(\beta)=3l+3n+3$ and $v(\gamma)=3m+5n$.
	
	\textbullet\ If $l\geq m$, then
	\[v(j(E)) = \left\{\begin{array}{ll}v(\delta) - v(\beta) = 3m+3>0, & \textrm{if}\ n=2,\ l=m,\\ v(\delta) - v(\gamma)=3m+n>0, & \textrm{otherwise.}\end{array}\right. \] 
	
	\textbullet\ If $l < m$, then
	\[v(j(E))=v(\delta) - v(\beta) = 6m+3n - 3l -3 >0.\qedhere\]
\end{proof}

\section{\texorpdfstring{The third term when $j=0$}{The third term when j is 0}}
	\label{sec:the_third_term_when_j_0}

In this section we give a separate result, with an elementary proof,
for the terms
$D_{3}$ and $D_{3p}$ in the case $j=0$,
because the local considerations of Section~\ref{sec:component_groups}
do not apply to that case.

We first collect some well-known results
about elliptic curves with $j$-invariant~$0$
in the following lemma,
of which we give a proof for completeness.
\begin{lemma}\label{lem:j0}
	Let $E$ be an elliptic curve with $j$-invariant $0$ over a field $L$
	of characteristic $p>0$.
	\begin{enumerate}
		\item\label{it:j01} If $p\equiv 1\ \mathrm{mod}\ 3$, then $E$ is ordinary.
		\item\label{it:j02} If $p\not\equiv 1\ \mathrm{mod}\ 3$, then $E$ is supersingular.
		\item \label{it:j0weq} If $p>3$, then $E$ has a Weierstrass model of the form
\[ y^2 = x^3 + A\]
		with $A\in L^*$.
		\item\label{it:j0final} If $p>3$ and $p\equiv 2\ \mathrm{mod}\ 3$, then any Weierstrass
		model as in \eqref{it:j0weq} satisfies
		\[[p]_{E}(x,y)=\left(A^{-\frac{p^2-1}{3}}x^{p^2}, -A^{-\frac{p^2-1}{2}}y^{p^2}\right).\]
	\end{enumerate}
	Moreover, all elliptic curves with non-zero $j$-invariant
	over fields of characteristic $2$
	and $3$ are ordinary.
\end{lemma}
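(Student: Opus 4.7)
The plan is to treat the four claims in turn; I would prove~(3) first, since it supplies the short Weierstrass model used throughout the rest. In characteristic $\neq 2,3$, completing the square in $y$ and eliminating the $x^2$-term via $x\mapsto x - a_2/3$ reduces any Weierstrass equation to $y^2 = x^3 + Bx + C$. The standard formula $j(E) = 1728\cdot 4B^3/(4B^3 + 27C^2)$ then forces $B = 0$ when $j(E) = 0$, and nonvanishing of the discriminant forces $A := C \neq 0$, giving the required model.

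For parts~(1) and~(2), in characteristic $p>3$ I would apply the Hasse invariant criterion to $y^2 = x^3 + A$: the Hasse invariant equals the coefficient of $x^{p-1}$ in $(x^3+A)^{(p-1)/2}$. A degree count shows this coefficient vanishes unless $3\mid p-1$, in which case it equals $\binom{(p-1)/2}{(p-1)/3} A^{(p-1)/6}$, a unit in $L$ (the binomial coefficient involves only factorials of integers less than~$p$). Hence $E$ is ordinary if and only if $p\equiv 1\pmod{3}$ for $p>3$, giving~(1) and the $p>3$ case of~(2). In characteristics $2$ and $3$, standard formulas give $j(E) = a_1^{12}/\Delta$ and $j(E) = a_2^6/\Delta$ (the latter in a short Weierstrass model) respectively, so $j=0$ forces $a_1 = 0$ (resp.\ $a_2 = 0$), which is exactly the condition that the Hasse invariant vanishes, making $E$ supersingular. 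Conversely, $j\neq 0$ in these characteristics yields a nonzero Hasse invariant and hence ordinariness, settling the ``moreover'' clause.

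For part~(4), the crucial computational step, I would reduce to $A = 1$ by base-changing to $\overline{L}$ and choosing $B\in \overline{L}$ with $B^6 = A^{-1}$. The map $\psi : E_A \to E_1$, $(x,y)\mapsto (B^2 x, B^3 y)$, is then an isomorphism, so $[p]_{E_A} = \psi^{-1}\circ [p]_{E_1}\circ \psi$. On $E_1 : y^2 = x^3 + 1$ over $\FF_p$, the hypothesis $p\equiv 2\pmod{3}$ makes cubing a bijection on $\FF_p$, so $\#E_1(\FF_p) = p+1$ and $a_p = 0$; the Frobenius $\phi_p$ therefore satisfies $\phi_p^2 = -p$ in $\End(E_1)$, giving $[p]_{E_1}(x,y) = -\phi_p^2(x,y) = (x^{p^2}, -y^{p^2})$. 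Conjugating by $\psi$ produces coefficients $B^{2(p^2-1)} = A^{-(p^2-1)/3}$ and $-B^{3(p^2-1)} = -A^{-(p^2-1)/2}$, both of which in fact lie in $L$; since $[p]_{E_A}$ is itself defined over~$L$, this yields the claimed formula.

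The main obstacle lies in part~(4): a priori the factorization $[p]_{E_A} = \alpha\circ \phi_{p^2}$ determines the isomorphism $\alpha: E_A^{(p^2)}\to E_A$ only up to an element of $\Aut(E_A)\cong\mu_6$, and the task is to pin down the correct sign and cube root among these six possibilities. The passage through $A=1$ over $\FF_p$ converts this into the elementary point-count $a_p = 0$, which forces $\phi_p^2 = -p$ and supplies the critical minus sign in the $y$-coordinate of the formula.
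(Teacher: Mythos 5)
Your proposal is correct, and for the computational heart of the lemma it coincides with the paper: part~(3) is the same reduction to a short Weierstrass equation, and part~(4) is exactly the paper's argument (pass to $\cstE:y^2=x^3+1$ via $(x,y)\mapsto(x/a^2,y/a^3)$ with $a^6=A$, use that cubing is a bijection on $\FF_p$ for $p\equiv 2\bmod 3$ to get $\#\cstE(\FF_p)=p+1$, hence $\Frob_p^2+[p]=0$, hence $[p](x,y)=(x^{p^2},-y^{p^2})$, then conjugate back; the descent to $L$ is automatic since the resulting coefficients are powers of $A$). Where you genuinely diverge is in parts (1), (2) and the ``moreover'' clause. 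The paper argues by invariance of supersingularity under $\overline{L}$-isomorphism, citing Silverman's worked examples V.4.4--V.4.6 for the curves $y^2=x^3+1$ ($p>3$) and the supersingular $j=0$ curves in characteristics $2$ and $3$, and then settles the ``moreover'' clause by quoting the Katz--Mazur count showing $j=0$ is the \emph{only} supersingular invariant in characteristics $2$ and $3$. You instead compute the Hasse invariant directly from a model: the coefficient of $x^{p-1}$ in $(x^3+A)^{(p-1)/2}$ vanishes unless $3\mid p-1$, and equals the unit $\binom{(p-1)/2}{(p-1)/3}A^{(p-1)/6}$ when it does, which handles $p>3$ cleanly; and in characteristics $2$ and $3$ you read off $j=a_1^{12}/\Delta$ resp.\ $j=a_2^6/\Delta$, identifying vanishing of $j$ with vanishing of the Hasse invariant. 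This is more self-contained (no appeal to the count of supersingular $j$-invariants) and gives both directions at once. The one place to be careful is characteristic~$2$: the ``coefficient of $x^{p-1}$ in $f^{(p-1)/2}$'' criterion does not apply there (no model $y^2=f(x)$), so the assertion that $a_1$ is the Hasse invariant needs its own justification or citation — e.g.\ note that when $a_1=0$ the $2$-division polynomial $\psi_2^2=a_1^2x^2+a_3^2$ is a nonzero constant, so $E[2](\overline{L})=0$ and $E$ is supersingular, while $a_1\neq 0$ yields a nontrivial $2$-torsion point, so $E$ is ordinary. With that small point supplied, your argument is complete.
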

\begin{proof}
	Suppose that $\cstE$ is
	an elliptic curve over $\FF_p$ with $j$-invariant~$0$.
	Then $E$ and $\cstE$ are isomorphic over~$\overline{L}$,
	and one is supersingular if and only if the other is
	(see e.g.~\cite[V.3.1(a)(i)]{Silverman_arithmetic}).
	For $p=2$ (respectively $p=3$) Example~V.4.6
	(respectively V.4.5) of loc.~cit.\ gives
	supersingular $\cstE/\FF_p$ with $j(\cstE)=0$.
	If $p>3$, then we take $\cstE : y^2 = x^3 + 1$,
	which is ordinary if and only if $p\equiv 1\ \mathrm{mod}~3$
	by 
	Example V.4.4 of~\cite{Silverman_arithmetic}.
	
	In characteristic $p>3$, there is a short Weierstrass equation
	$y^2 = x^3 + Bx+A$ and as $j(E)=0$, we get $B=0$.
	This proves~\eqref{it:j0weq}.
	
	Let $a = \sqrt[6]{A}\in\overline{F}$ and
	$\phi : E \rightarrow \cstE : (x,y)\mapsto (x/a^2, y/a^3)$,
	where again $\cstE : y^2 = x^3 + 1$.
	If $p\equiv 2\ \mathrm{mod}~3$, then we claim
	$\#\cstE(\FF_p) = p+1$. Indeed, in that case 
	the map $\FF_p\rightarrow \FF_p : x\mapsto x^3+1$ is a bijection,
	hence so is $\cstE(\FF_p)\rightarrow \P^1(\FF_p) : (x,y)\mapsto y$,
	which proves the claim.
	By \cite[Theorem 2.3.1(b) in the Second Edition]{Silverman_arithmetic},
	we then get $\Frob_p^2 + [p] = 0$ inside $\mathrm{End}(\cstE)$,
	so $[p]_{\cstE} : (x,y)\mapsto (x^{p^2}, -y^{p^2})$.
	We conclude:
	\begin{align*}[p]_{E}(x,y) &= \phi^{-1}\circ [p]_{\cstE}\circ \phi (x,y)\\ &= (a^2(x/a^2)^{p^2}, -a^3(y/a^3)^{p^2})\\
	&= (A^{-2\frac{p^2-1}{6}}x^{p^2}, -A^{-3\frac{p^2-1}{6}}y^{p^2}),
	\end{align*}
	which proves~\eqref{it:j0final}.
	
	For the final remark, it suffices to know
	that there is exactly one supersingular $j$-invariant in
	each characteristic $p\in\{2,3\}$.
	But this follows from the formula for the number of supersingular
	$j$-invariants in Corollary 12.4.6 of Katz-Mazur~\cite{Katz_Mazur}
	(that formula needs the order of the automorphism
	group of the elliptic curve with $j$-invariant zero, which
	is computed in Proposition A.1.2(c) of~\cite{Silverman_arithmetic}).
\end{proof}

\begin{proposition}\label{prop:D_3P_is_primitive_for_j_0}
	Let $K$ be a field of characteristic $p\geq 0$ with $p\not=2,3$,
	let $C$ be a smooth, projective, geometrically irreducible
	curve over $K$ and let $F = K(C)$.
	Let $E$ be an elliptic curve over $F$ with $j$-invariant $0$
	and let $P\in E(F)$ be a point of infinite order.
	
	If the pair $(E,P)$ is not constant, then
	the term $D_{3P}$ has a primitive valuation.
\end{proposition}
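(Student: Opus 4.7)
The plan is to locate a place $v$ of $F$ at which $E$ has good reduction and $P$ specializes to a non-identity $3$-torsion point of $\overline{E}_v$; at such a $v$ one has $m(P,v)=3$, making $v$ a primitive valuation of $D_{3P}$.

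First I would use Lemma~\ref{lem:j0}\eqref{it:j0weq} to put $E$ in short Weierstrass form $y^2=x^3+A$ with $A\in F^*$, and write $P=(x_0,y_0)$. If $E$ is constant over $F$ (equivalently $A\in K^*\cdot F^{*6}$), then Theorem~\ref{thm:constE} applied with $n=3$ (valid since $p\nmid 3$) already yields a primitive valuation of $D_{3P}$, so I reduce to the case that $E$ is non-constant. The main actor is then the rational function
\[\xi = \frac{x_0^3+4A}{4A} \in F^*,\]
which is nonzero because $P$ is not $3$-torsion. I would first verify that $\xi$ is non-constant: if $\xi=c\in K^*$, then solving for $A$ in terms of $x_0$ and substituting in $y_0^2=x_0^3+A$ yields $y_0^2=\lambda x_0^3$ for some $\lambda\in K^*$ (nonzero, else $P$ would be $2$-torsion); this forces $x_0\in K^*\cdot F^{*2}$, and hence $A\in K^*\cdot F^{*6}$, contradicting non-constancy.

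Since $\xi$ is non-constant, it has a zero $v$. A short valuation analysis comparing $3v(x_0)$ with $v(A)$ shows that $v(\xi)>0$ forces $3v(x_0)=v(A)$. The crucial step --- and this is where the relation $y_0^2=x_0^3+A$ enters --- is to use that $v(y_0^2)=v(x_0^3+A)$ is even in order to force $v(A)\equiv 0\pmod 6$: indeed, if $v(A)=3v(x_0)$ were odd, then the evenness of $v(x_0^3+A)$ would force $v(x_0^3+A)\geq v(A)+1$, and then $v(x_0^3+4A)=v((x_0^3+A)+3A)=v(3A)=v(A)$, contradicting $v(\xi)>0$. Writing $v(A)=6k$ and $v(x_0)=2k$, pass to the $v$-minimal model by the scaling $(X,Y)=(u^{-2}x, u^{-3}y)$ for some $u\in F^*$ with $v(u)=k$: this model has good reduction at $v$, the minimal coordinates of $P$ are $v$-adic units, and the zero condition on $\xi$ translates into the cube of the minimal $x$-coordinate of $P$ equalling $-4\overline{A_{\min}}$ in the residue field. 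Hence $\overline{P}_v$ is a non-identity $3$-torsion point of $\overline{E}_v$.

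The main obstacle, and the point where both the characteristic hypothesis $p\neq 2,3$ and the specific short Weierstrass shape available for $j=0$ matter, is the parity step forcing $v(A)\equiv 0\pmod 6$ at zeros of $\xi$. Without this, a zero of $\xi$ could in principle sit at a place of $I_0^*$ reduction (where $v(A)\equiv 3\pmod 6$), and no primitive valuation of $D_{3P}$ would arise from it.
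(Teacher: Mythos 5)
Your proposal is correct and follows essentially the same route as the paper's proof: your $\xi$ is the paper's auxiliary function $h=(x_1^3+4A)/A$ up to the factor $4$, the reduction to Theorem~\ref{thm:constE} when $A\in K^*(F^*)^6$, the non-constancy argument via the pair being non-constant, and the valuation analysis forcing $v(A)\in 6\ZZ$ are all the same, and your final step (at a good place the reduction of $P$ is a nontrivial $3$-torsion point, so $m(P,v)=3$) is just a conceptual repackaging of the paper's explicit triplication-formula computation, since $x^3+4A$ is the nontrivial factor of the $3$-division polynomial. One small point to add when writing it up: in the non-constancy check the value $c=1$ (i.e.\ $x_0=0$) must be treated separately, since one cannot then solve for $A$; there $P$ would be $3$-torsion, exactly parallel to the case $c=0$ handled explicitly in the paper.
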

\begin{proof}
	As the characteristic is not $2$ or $3$ and the $j$-invariant is~$0$,
	we get a Weierstrass equation $y^2=x^3+A$ with $A\in F^*$
	(cf.~Lemma~\ref{lem:j0}\eqref{it:j0weq}).
	If $A\in (F^*)^6K^*$, then $E$ is isomorphic over $F$
	to a curve over~$K$ and the result is a special case of
	Theorem~\ref{thm:constE}.
	So we restrict to the remaining case: $A\not\in (F^*)^6K^*$.
	
	Write $P = (x_1, y_1)\in E(F)$.
	We claim that $x_1^3/A$ is non-constant.
	Indeed, suppose it is $c\in K$.
	If $c=0$, then $P$ is $3$-torsion, contradiction.
	So $c\in K^*$ and $y_1^2/A = c+1$.
	If $c=-1$, then $P$ is $2$-torsion, contradiction.
	So we get $c+1\in K^*$.
	Now compute
	\begin{align}
	A &= x_1^3 c^{-1} = y_1^2 (c+1)^{-1},\qquad\mbox{so}\\
	A &= \frac{A^3}{A^2} = \left(\frac{y_1}{x_1}\right)^6\frac{c^2}{(c+1)^3}\in (F^*)^6K^*.
	\end{align}
	Contradiction, hence $x_1^3/A$ is non-constant.
	
	As a consequence, the function $h = x_1^3/A + 4$ is also non-constant, so let
	$v$ be a valuation of $F$ with $v(h)>0$.
	We obtain $3v(x_1)-v(A)=v(h-4)=0$ and $2v(y_1)-v(A) = v(h-3)=0$, hence
	$v(A)\in 3\ZZ\cap 2\ZZ = 6\ZZ$.
	By the transformation $A\mapsto u^6A$, $x\mapsto u^2x$, $y\mapsto u^3y$,
	which does not change~$h$, we then get $v(A)=0$, hence $v(x_1)=0$.
	
	Write $x_3=x(3P)$, which we compute to be 
	\begin{equation}\label{eq:mult3j0}
	x_3 = \frac{x_{1}^{9} - 96 A x_{1}^{6} + 48 A^{2} x_{1}^{3} + 64 A^{3}}{9 x_1^2(x_{1}^{3} + 4 A)^2}.
	\end{equation}
	Recall $v(x_1)=v(A)=0$ and $v(x_1^3+4A)>0$. In particular, the valuation of the denominator
	of this expression for $x_3$ is positive.
	The numerator is congruent to $-(12A)^3$
	modulo $x_1^3+4A$, hence is $\not\equiv 0$ modulo~$v$.
	We conclude $v(x_3)<0$ and $v(x_1)= 0$ for the minimal Weierstrass equation
	$y^2 = x^3 + A$, hence $v(D_{3P})>0$ and $v(D_{P})=0$.	
\end{proof}

\begin{lemma}\label{lem:D_3pP_is_primitive_for_j_0}
    Let $K$ be a field of characteristic $p> 3$
	with $p\equiv 2~\mathrm{mod}~3$.
	Let $C$ be a smooth, projective, geometrically irreducible
	curve over $K$ and let $F = K(C)$.

	Then there exist a supersingular elliptic curve $E$ over $F$
	with $j$-invariant $0$ and a point $P\in E(F)$ of infinite order
	such that $D_{3pP}$ has a primitive valuation
	and $(E,P)$ is non-constant.
\end{lemma}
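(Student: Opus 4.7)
The plan is to exhibit a single explicit family of examples. Given $C$ and $F=K(C)$, let $t\in F$ be a uniformizer at an arbitrary closed point of~$C$, so $v(t)=1$ at the corresponding valuation~$v$, and hence $v(1-t)=0$. Set $A=t^{2}(1-t)$, define $E:y^{2}=x^{3}+A$, and let $P=(t,t)\in E(F)$ (one checks $t^{2}=t^{3}+t^{2}(1-t)$). Since $j(E)=0$ and $p\equiv 2\pmod 3$, Lemma~\ref{lem:j0}\eqref{it:j02} shows that $E$ is supersingular.

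At $v$ the Weierstrass model is minimal (as $v(A)=2<6$) with reduction of type~$IV$ (as $v(A)\equiv 2\pmod 6$) and component group $\mathbf{Z}/3$. Since $v(x(P))=v(y(P))=1$, the point $P$ reduces to the cusp of the singular fibre $y^{2}=x^{3}$, so $P\notin E_{0}(F_{v})$ and $d_{v}=3$. The key calculation is to apply the triplication formula~\eqref{eq:mult3j0} and verify $v(x(3P))=0$: using $p\neq 2$ we have $v(x_{1}^{3}+4A)=v(4A)=2$, so the denominator $9x_{1}^{2}(x_{1}^{3}+4A)^{2}$ has valuation $0+2+4=6$, while among the four terms in the numerator only $64A^{3}$ attains the minimal valuation~$6$ (the others have valuations $9,8,7$). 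Hence $3P$ reduces to a smooth affine point, placing it in $E_{0}(F_{v})\setminus E_{1}(F_{v})$.

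By Lemma~\ref{lem:additive}\eqref{it:additive1} in characteristic~$p$, we have $m(P,v)\in\{d_{v},d_{v}p\}=\{3,3p\}$, and the exclusion $3P\notin E_{1}(F_{v})$ rules out $m(P,v)=3$, leaving $m(P,v)=3p$; thus $v$ is a primitive valuation of $D_{3pP}$. The remaining conditions come for free: the order of any torsion $P$ would be divisible by $m(P,v)=3p$ and hence by~$p$, contradicting the fact that a supersingular $E$ has no $p$-torsion in $E(\overline{F})$, so $P$ has infinite order; and $(E,P)$ is non-constant because $v(A)=2$ is not divisible by~$6$, so $A\notin K^{\ast}(F^{\ast})^{6}$ and $E$ is not $F$-isomorphic to any elliptic curve defined over~$K$. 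The main obstacle is pinning down precisely $v(x(3P))=0$: a negative value would instead force $3P\in E_{1}(F_{v})$ and so $m(P,v)=3$ rather than $3p$, so the choice $A=t^{2}(1-t)$, $P=(t,t)$ is calibrated exactly to make the numerator and denominator of~\eqref{eq:mult3j0} have equal valuation at~$v$.
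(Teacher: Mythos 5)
Your proof is correct, and while your example is essentially the paper's own (the paper takes any $x_1,y_1\in F$ with $v(x_1)=v(y_1)=1$ and sets $A=y_1^2-x_1^3$, of which your $A=t^2(1-t)$, $P=(t,t)$ is the special case $x_1=y_1=t$; the triplication computation giving $v(x(3P))=0$ is also the same), the mechanism by which you locate the rank of apparition is genuinely different. The paper at this point invokes the explicit multiplication-by-$p$ formula of Lemma~\ref{lem:j0}\eqref{it:j0final} to compute directly that $v(x(pP))>0$ and $v(x(3pP))<0$, so that $v(\dm{p})=0$ and $v(\dm{3p})>0$, and it obtains the infinite order of $P$ by iterating $[p]$ and watching $v(x(3p^kP))$ decrease. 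You instead read off the reduction type $IV$ at $v$, note $P\notin E_0(F_v)$ so $d_v=3$, and use Lemma~\ref{lem:additive}\eqref{it:additive1} to get $m(P,v)\in\{3,3p\}$, excluding $3$ by the triplication computation; the infinite order then comes from $3p\mid m(P,v)\mid \mathrm{ord}(P)$ together with the absence of $p$-torsion on a supersingular curve. Both routes are sound; the paper's is completely explicit and self-contained at this point, whereas yours reuses the local component-group machinery already developed in Section~\ref{sec:relating_constant_E_to_constant_j_locally} and avoids the $[p]$ formula altogether, making visible that the hypothesis $p\equiv 2\ \mathrm{mod}\ 3$ enters only through supersingularity. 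One harmless point worth making explicit: applying \eqref{eq:mult3j0} presupposes $3P\neq O$, which your computation already guarantees since the denominator $9x_1^2(x_1^3+4A)^2=9t^6(4-3t)^2$ is a nonzero element of $F$.
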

\begin{proof}
	Take any valuation $v$ and $x_1, y_1\in F$ with $v(x_1)=v(y_1)=1$.
	Let $A = y_1^2-x_1^3$, let $E:y^2 = x^3+A$
	and let $P = (x_1, y_1)\in E(F)$.
	Write $3P=(x_3,y_3)$.
	
	Note $v(A) = 2$, hence the model is minimal at~$v$.
	As $v(x_1^3)>v(A)$, the triplication formula \eqref{eq:mult3j0} gives
	$v(x_3) = 0$, so $v(D_{3P})=0$.

	As $v(x_3) = 0$ and $v(A) = 2$, the multiplication-by-$p$ formula of Lemma~\ref{lem:j0}
	gives $v(x(3pP)) = -\frac{p^2-1}{3}\cdot 2 + p^2 \cdot 0 < 0$, so
	$v(D_{3pP}) > 0$.
	As $v(x_1) = 1$ and $v(A) = 2$, the same multiplication-by-$p$ formula also
	gives $v(x(pP)) = -\frac{p^2-1}{3}\cdot 2 + p^2 >0$,
	so $v(D_{pP})=0$.
    We find that $v$ is a primitive valuation of $D_{3pP}$.
    As $v(A)=2\not\in 6\ZZ$, we find that $A$ is not a $6$th power,
    hence $E$ is not isomorphic to a curve over $K$, hence
    the pair $(E,P)$ is non-constant.
    
    Repeated use of the multiplication-by-$p$ formula gives
    that $v(x(3p^kP))$ is strictly decreasing with~$k$, hence
    $P$ is non-torsion.
\end{proof}

\begin{example}
	Let $K = \FF_5$ and $F = K(t)$.
	As in the proof of Lemma~\ref{lem:D_3pP_is_primitive_for_j_0}, take
	$P = (t,t)$ and $E : y^2 = x^3 + t^2-t^3$.
	Then
	\begin{align*}
	\dm{1} &= \dm{2}=1, &
	\dm{3} &= t + 2, \\
	\dm{4} &= t^{2} + 2 t + 4, &
	\dm{5} &= (t + 4)^{4}, \\
	\dm{6} &= (t + 1) \cdot (t + 2) \cdot (t + 3) \cdot (t^{2} + t + 2), \\
	\dm{7} &= (t^{2} + 2 t + 3) \cdot (t^{3} + t^{2} + 2) \cdot (t^{3} + 4 t^{2} + 3 t + 4), \\
	\dm{8} &= (t^{2} + 2 t + 4) \cdot (t^{4} + 2 t^{2} + 2 t + 1) \cdot (t^{4} + 3 t^{3} + 3 t^{2} + 2 t + 2), \\
	\dm{9} &= (t + 2) \cdot (t^{3} + t + 4) \cdot (t^{3} + 3 t^{2} + 4) \cdot (t^{6} + 3 t^{4} + 3 t^{3} + t + 3), &
	\dm{10} &= (t + 4)^{4} \cdot \infty^{ 12 }, \\
		% \dm{11} &= (t^{10} + 2 t^{9} + t^{8} + 3 t^{7} + 3 t^{5} + 4 t^{4} + 3 t^{2} + t + 4) \cdot (t^{10} + 3 t^{9} + 3 t^{8} + 2 t^{7} + 4 t^{6} + 4 t^{5} + 2 t^{4} + 4 t^{3} + 4 t + 1) \\
	%	\dm{12} &= (t + 1) \cdot (t + 2) \cdot (t + 3) \cdot (t^{2} + t + 2) \cdot (t^{2} + 2 t + 4) \cdot (t^{4} + t^{2} + 2) \cdot (t^{4} + 3 t^{3} + 4 t^{2} + 2 t + 4) \cdot (t^{4} + 4 t^{3} + 2 t^{2} + 2) \cdot (t^{4} + 4 t^{3} + 2 t^{2} + 4 t + 3) \\
	% \dm{13} &= (t^{4} + t^{3} + 2 t^{2} + 4 t + 4) \cdot (t^{4} + 2 t^{3} + 4) \cdot (t^{4} + 2 t^{3} + t + 4) \cdot (t^{4} + 2 t^{3} + 3 t^{2} + 2) \cdot (t^{4} + 3 t^{3} + t + 2) \cdot (t^{4} + 3 t^{3} + 4 t^{2} + 2 t + 3) \cdot (t^{4} + 4 t^{3} + 4 t + 3) \\
	% \dm{14} &= (t^{2} + 2 t + 3) \cdot (t^{3} + t^{2} + 2) \cdot (t^{3} + 2 t^{2} + 3) \cdot (t^{3} + 4 t^{2} + 3 t + 4) \cdot (t^{3} + 4 t^{2} + 4 t + 4) \cdot (t^{6} + t^{5} + 4 t^{3} + 3 t^{2} + t + 1) \cdot (t^{6} + 4 t^{5} + 2 t^{3} + 2 t^{2} + 4 t + 3) \cdot (t^{6} + 4 t^{5} + 4 t^{4} + 2 t^{3} + 3 t^{2} + 3 t + 2) \\
	\dm{15} &= (t + 4)^{4} \cdot t^{8} \cdot (t + 2)^{25},\\
	\dm{20} &= (t + 4)^{4} \cdot (t^{2} + 2 t + 4)^{25} \cdot \infty^{ 12 }.
	% \dm{21} &= (t + 2) \cdot (t^{2} + 3) \cdot (t^{2} + 2 t + 3) \cdot (t^{2} + 4 t + 1) \cdot (t^{3} + t^{2} + 1) \cdot (t^{3} + t^{2} + 2) \cdot (t^{3} + t^{2} + t + 3) \cdot (t^{3} + 4 t^{2} + 3) \cdot (t^{3} + 4 t^{2} + 4) \cdot (t^{3} + 4 t^{2} + 3 t + 4) \cdot (t^{6} + 4 t^{3} + 3 t^{2} + 2 t + 4) \cdot (t^{6} + 3 t^{4} + 3 t^{3} + 3 t^{2} + t + 3) \cdot (t^{6} + t^{5} + 2 t^{3} + 3 t + 4) \cdot (t^{6} + t^{5} + 3 t^{4} + t^{3} + 3 t^{2} + 2) \cdot (t^{6} + 2 t^{5} + 4 t^{3} + 3 t^{2} + 2 t + 2) \cdot (t^{6} + 2 t^{5} + 2 t^{4} + 3 t^{3} + 4 t^{2} + 2 t + 2) \cdot (t^{6} + 2 t^{5} + 2 t^{4} + 4 t^{3} + 3 t^{2} + 3 t + 1) \cdot (t^{6} + 4 t^{5} + 4 t^{4} + 4 t^{2} + 2 t + 1) \\
	\end{align*}
	And indeed
	the term $D_{15P}$ has a primitive valuation~$t$.
\end{example}

\section{Additional examples}\label{sec:additional_examples}

In this section we gather examples that are crucial for the proof
of optimality in the main theorems. 
In our examples, the function field $F$ is always
$F=K(t)$ for
a field~$K$, that is, the examples have $C = \P^1$.
The following result shows that this suffices, in the sense
that the existence of such examples implies
the existence of examples over all function
fields that we consider.

\begin{theorem}
	\label{thm:smallexamplesuffices}
	Let $K$ be a field and let $F$ be the function
	field of a smooth, projective, geometrically irreducible curve over~$K$.
	Let $E$ be an elliptic curve over $K(t)$ with $j(E)\in K$ and
	let
	$P\in E(K(t))$.
	
	If there is a rational place in $\P^1(K)$ of good reduction
	of $E$, then
	there exist
	an embedding $K(t)\hookrightarrow F$,
	an elliptic curve $E'$ over~$F$, and a point $P'\in E'(F)$
	such that
	\begin{enumerate}
		\item $P'$ and $P$ have the same order,
		\item $E'$ and $E$ have the same $j$-invariant, and
		\item for every valuation $v'$ of $F$, if $v$ is the restriction
		to $K(t)$, then
		$$m(P', v') = m(P, v).$$
	\end{enumerate}
\end{theorem}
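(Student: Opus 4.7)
The plan is to induce the embedding $K(t) \hookrightarrow F$ from a suitably chosen non-constant morphism $f : C \to \P^1_K$, then define $E' := E \otimes_{K(t)} F$ (base change along $f^*$) and let $P'$ be the image of $P$ in $E'(F)$. The equality $j(E') = j(E)$ is automatic, and the order of $P'$ matches that of $P$ since base change to a field extension preserves the order of a rational point of a group scheme. The substance of the theorem is therefore the equality $m(P,v) = m(P',v')$ for every place $v'$ of $F$ and its restriction $v$ to $K(t)$.

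I would verify this equality case by case according to the reduction type of $E$ at $v$. At a good-reduction place, the discriminant of a $v$-minimal Weierstrass equation has valuation zero, and this persists under any base change, so the equation remains $v'$-minimal. Moreover $m(P,v)$ equals the order of $\widetilde P$ in the reduction $\widetilde E(k(v))$, and this order is preserved under the inclusion $\widetilde E(k(v)) \hookrightarrow \widetilde E(k(v'))$; in particular no constraint on the ramification of $f$ is needed here. Since $j(E) \in K$, the only other possibility is additive reduction (by Lemma~\ref{lem:additive}\eqref{it:constant}), and for such a place $v$ I would require $f$ to be unramified over $v$. This guarantees that a $v$-minimal Weierstrass equation remains $v'$-minimal, that the special fibre of the N\'eron model is preserved, and consequently that the order $d_v$ of $P$ in the component group and the value of $m(P,v) \in \{d_v, d_v p\}$ supplied by Lemma~\ref{lem:additive}\eqref{it:additive1} both carry over unchanged to $v'$.

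The remaining task is to produce a non-constant morphism $f : C \to \P^1_K$ that is unramified above every bad-reduction place of $E$. The hypothesis of a rational good-reduction place $a \in \P^1(K)$ enters here: after a fractional linear transformation over $K$ moving $a$ to $\infty$, the finite set $S$ of bad-reduction places of $E$ lies entirely in $\AA^1_K$. Taking any non-constant rational function $g_0 \in F$ and composing with an affine transformation $t \mapsto \alpha t + \beta$ over $K$, one translates the (finite) set of critical values of $g_0$ and, by a standard avoidance argument, finds $(\alpha,\beta) \in K^* \times K$ such that the new critical values miss $S$, provided $K$ has enough elements. When $K$ is too small for this counting, one instead applies Riemann--Roch to construct a function on $C$ whose pole divisor is supported over a single geometric point above $\infty$, forcing all ramification of $f$ to occur above $\infty \notin S$.

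The principal obstacle is this construction of $f$ with controlled ramification, most delicate over small finite fields; the rational good-reduction place serves as the ``sink'' that absorbs the ramification away from the bad places of $E$, which is why it is the crucial geometric ingredient in the hypothesis. Once $f$ is in hand, the verification of the three claims follows routinely from the local analysis summarised above and the lemmas of Section~\ref{sec:relating_constant_E_to_constant_j_locally}.
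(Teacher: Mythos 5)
Your overall route is the same as the paper's: base change $(E,P)$ along a non-constant morphism $f\colon C\to\P^1_K$ that is unramified above the bad-reduction places, check (1) and (2) are automatic, and prove (3) locally, with no condition needed at good places and with the N\'eron-model/minimality argument at the additive places (this is the paper's Proposition~\ref{prop:transferexample}, proved via \cite[Theorem~7.2.1(ii)]{BLR_Neron_models} and the characterisation of $E_1$ through the zero section of the special fibre). Your local analysis is fine in outline, though the assertion that $m(P,v)$ itself carries over is cleanest if you argue, as the paper does, that $v(x(nP))<0$ holds iff the section meets the zero section of the special fibre, a condition stable under the \'etale base change of the N\'eron model; merely knowing $d_v$ and the dichotomy $m\in\{d_v,d_vp\}$ would not suffice.

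The genuine gap is in your construction of $f$ when $K$ is finite, which is exactly the case the paper needs for its optimality examples. You propose to use Riemann--Roch to produce a function on $C$ whose pole divisor is supported at a single point and to conclude that ``all ramification of $f$ occurs above $\infty$''. That implication is false: a function with a single pole can be, and typically is, ramified over affine values. For instance, the $x$-coordinate on an elliptic curve has its only pole at $O$, yet as a degree-$2$ map to $\P^1$ it is branched over the roots of the cubic; similarly $x\mapsto x^2$ on $\P^1$ in odd characteristic has one pole but is branched over $0$. What is actually needed here is the wild $p$-Belyi theorem of Katz (the paper's Theorem~\ref{thm:katz_theorem_on_belyi_maps}): in positive characteristic every smooth projective geometrically irreducible curve admits a map to $\P^1$ that is \emph{\'etale over all of} $\AA^1$, and its proof genuinely exploits wild (Artin--Schreier-type) ramification to push all branching to $\infty$; it is not a consequence of Riemann--Roch alone. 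Once one has such a map, the rational good-reduction place is used as in the paper's Proposition~\ref{prop:mapexists}: a fractional linear transformation moves the unique branch point to that place, so the branch locus misses $S$. A second, smaller point: over infinite fields of characteristic $p$ your ``any non-constant $g_0$'' must be chosen separable (otherwise the map is nowhere \'etale and the branch locus is not finite); this is easily repaired by taking a separating element, as the paper does via \cite[Proposition~3.2.15]{Liu}.
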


The main idea for the proof of Theorem~\ref{thm:smallexamplesuffices}
is to base change via a suitable morphism of base curves.
We will use the following results.

We denote by $\Br(f)$ the branch locus of a finite morphism $f:X\rightarrow Y$ of
normal projective curves over~$K$.
This is the image through $f$ of the set of closed points $x\in X$ for
which the map $f$ is not \'{e}tale at $x$, cf.~\cite[Definition~7.4.15]{Liu}.

\begin{proposition}
	\label{prop:transferexample}
	Let $K$ be a field. Let $C$ and $C'$ be smooth projective curves defined over~$K$.
	Let $\phi:C'\rightarrow C$ be a dominant morphism of curves over $K$.
	Let $E$ be an elliptic curve over $K(C)$ and $P$ a point in $E(K(C))$.
	Let $E'$ denote the elliptic curve obtained from the pull-back by the map
	$\phi$ and $P'$ the corresponding point on~$E'$.
	We assume that the branch locus $\Br(\phi)$ of $\phi$
	is disjoint with the set of places of bad reduction for~$E$. 
	Then for every valuation $v'$ in $K(C')$ above $v$ in $K(C)$ we have 
	\[m(P,v)=m(P',v').\]
\end{proposition}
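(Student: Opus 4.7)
We argue locally at a pair of valuations $(v,v')$ with $v'\mid v$, and split the analysis into the cases of good and bad reduction of $E$ at~$v$. In both cases, the key step will be to transfer a $v$-minimal Weierstrass equation of $E$ to a $v'$-minimal one of $E'$; once that is done, the definition~\eqref{eq:defedseasy} immediately gives the comparison between $v'(\dnpr)$ and $v(\dn)$, and hence between the two ranks of apparition.

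Assume first that $E$ has good reduction at $v$. Then the smooth proper $\mathcal{O}_v$-model of $E$ base-changes to a smooth proper $\mathcal{O}_{v'}$-model of $E'$, regardless of the ramification of $\phi$ at~$v'$, so $E'$ has good reduction at~$v'$. Any Weierstrass equation of $E$ with $v(\Delta)=0$ has $v'(\Delta)=0$ as well, so it remains $v'$-minimal after pullback. A direct computation using~\eqref{eq:defedseasy} then yields
\[
v'(\dnpr)=e(v'\mid v)\cdot v(\dn)
\]
for every $n\geq 1$, so $v'(\dnpr)\geq 1$ if and only if $v(\dn)\geq 1$, and hence $m(P,v)=m(P',v')$.

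Assume now that $E$ has bad reduction at~$v$. By the branch-locus hypothesis we have $v\notin\Br(\phi)$, so $\phi$ is étale at every $v'$ above~$v$; equivalently, the extension of completions $F'_{v'}/F_v$ is unramified. Under an unramified base change, the Kodaira--N\'eron model is itself obtained by base change: a minimal Weierstrass equation of $E$ at $v$ stays minimal at $v'$, and the identity component of the special fibre (so also the kernel of reduction $E_1$) is preserved. Consequently $v'(\dnpr)=v(\dn)$ for all~$n$, or equivalently $nP\in E_1(F_v)$ if and only if $nP'\in E'_1(F'_{v'})$, giving $m(P,v)=m(P',v')$ in this case too.

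The main obstacle is the clean behaviour of N\'eron models under unramified base change, which we will cite from standard references (such as \cite[Ch.~IV]{Silverman_book2} and its treatment of the Kodaira--N\'eron model, or Bosch--L\"utkebohmert--Raynaud), together with an unambiguous identification of $P'$ with $P$ viewed inside $E(F'_{v'})$. Once these inputs are in place, the entire argument reduces to the two short valuation computations sketched above, and the key conceptual point is simply that the hypothesis $\Br(\phi)\cap\{\text{places of bad reduction}\}=\emptyset$ is \emph{exactly} what is needed to ensure that the only potentially delicate valuations of $E$ sit below unramified extensions.
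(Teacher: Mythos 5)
Your proposal is correct and follows essentially the same route as the paper: split into good and bad reduction, and at bad places use that the branch-locus hypothesis makes $\phi$ étale above $v$ (ramification index $1$ with separable residue extension), so that the Néron/minimal model and the kernel of reduction are preserved under base change (the paper cites \cite[Theorem~7.2.1(ii)]{BLR_Neron_models} and the zero-section criterion from \cite[Corollary IV.9.2]{Silverman_book2} and \cite[Theorem 5.5]{Conrad_minimal_models}). Your phrasing via $v'$-minimality of the pulled-back Weierstrass equation and preservation of $E_1$ is an equivalent packaging of the same argument.
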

\begin{proof}
	If $v$ is a place of good reduction for $E$ and $v'$ is any place above $v$ in $K(C')$,
	then the elliptic curve $E'$ still has good reduction at $v'$ and
	the order of the point $P'$ modulo~$v'$
	is the same as
	the order of the point $P$ modulo~$v$.

	It remains to prove the result for places of bad reduction,
	so let $v$ be such a place.
	Let $R\subset F=K(C)$ (respectively $R'\subset F'=K(C'))$
	denote the discrete valuation ring with valuation $v$ (respectively~$v'$).
	From our assumptions and \cite[Definition 7.4.15]{Liu} it follows that the extension
	$R'/R$ has ramification index~$1$ and that the corresponding extension $k'/k$
	of residue fields is separable.
	
	Let $\mathcal{E}$ be the N\'eron model of $E$ over~$R$.
	It follows from \cite[Theorem~7.2.1(ii)]{BLR_Neron_models} that the base change
	$\mathcal{E}' = \mathcal{E}\otimes_{R}R'$
	is the Néron model of $E_{F'}$ over~$R'$.
	% DETAILS TO KEEP IN COMMENTS:
	%        In the statement an explicit and natural isomorphism is given.
	%        In the proof it is even better: there it really says that calE\otimes_R R' is a Néron model
	%        (and I change "a" into "the" because it is defined by a universal property).
	
    Let $x$ (respectively $x'$) be the $x$-coordinate function
    of a $v$-minimal (respectively $v'$-minimal) Weierstrass equation of~$E$.
	For a point $Q\in E(F)$, we denote by $\widetilde{Q}$
    the corresponding point in~$\mathcal{E}(R)$.
    We have for every point $Q\in E(F)$ that $v(x(Q))<0$ holds
    if and only if $\widetilde{Q}$
    restricts to the zero section of the special fibre,
    that is, satisfies $\widetilde{Q}\otimes_R k = O$
    (see \cite[Corollary IV.9.2]{Silverman_book2} and \cite[Theorem 5.5]{Conrad_minimal_models}).
    By base-changing from $R$ to $R'$,
    we see that this
    happens if and only if $\widetilde{Q}'\in\mathcal{E}(R')=\mathcal{E}'(R')$
    satisfies $\widetilde{Q}'\otimes_{R'} k'=O$, hence
    if and only if $v(x'(Q))<0$ holds.

    Applying this to $Q=nP$ for any $n$, we find $v(D_{nP})>0$ if and only if $v'(D_{nP'})>0$.
    In particular, we have $m(v,P) = m(v',P')$.
\end{proof}

In order to use Proposition~\ref{prop:transferexample},
we need to find an appropriate morphism $\phi$
for every function field $F=K(C)$ and
suitable examples over $K(t)$ for prime fields
$K$.
We use the following result
to find such maps.

\begin{theorem}[\protect{Wild $p$-Belyi theorem of Katz
		\cite[Lemma~16]{Katz_Belyi}, \cite[Theorem~11]{Zapponi}}]
	\label{thm:katz_theorem_on_belyi_maps}
	Let $C$ be a smooth, projective, geometrically irreducible curve
	defined over a perfect field $K$ of positive characteristic.
	Then there exists a non-constant morphism
	$\phi: C\rightarrow\P^{1}_K$
	(over~$K$)
	that is unramified above $\AA^1$.
	\qed
\end{theorem}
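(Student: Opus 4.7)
\noindent The plan is to adapt Belyi's classical reduction strategy to positive characteristic, where the availability of Artin--Schreier-type wild ramification allows one to concentrate all ramification at a single point rather than distributing it over three points as in the characteristic-zero case.

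The argument proceeds in two stages. First, reduce to the case that the branch locus is $K$-rational. Take any non-constant $K$-morphism $f_{0}:C\to\P^{1}_{K}$, which exists because $C$ is a smooth projective curve of dimension one over~$K$; its branch locus $B_{0}$ is a finite set of closed points. Apply Belyi's first trick iteratively: for each non-$K$-rational closed branch point, compose with the $K$-morphism $\P^{1}\to\P^{1}$ defined by its minimal polynomial (separable because $K$ is perfect). A natural complexity measure---for instance the sum of degrees of residue extensions at non-rational branch points---strictly decreases at each iteration, and after finitely many steps one obtains a morphism $f_{1}:C\to\P^{1}_{K}$ whose branch locus $B_{1}$ lies in $\P^{1}(K)$.

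Second, follow the wild-Belyi trick of Katz and Zapponi to absorb the finite points of $B_{1}$ into~$\infty$. The essential observation enabling this in positive characteristic is the insensitivity of derivatives to $p$-th power terms: a polynomial of the form $Q(t)=t+R(t^{p})$ satisfies $Q'=1$ identically, hence is tamely unramified everywhere on $\AA^{1}$, with all of its ramification wildly concentrated at infinity. Given $\{a_{1},\ldots,a_{r}\}=B_{1}\cap\AA^{1}(K)$, the images $a_{i}^{p}$ are distinct (Frobenius being injective on $K$), so Lagrange interpolation produces $R\in K[t]$ of degree $\leq r-1$ satisfying $R(a_{i}^{p})=-a_{i}$, whence $Q$ vanishes at each~$a_{i}$. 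Iteratively composing $f_{1}$ with such polynomials together with judicious M\"obius transformations collapses the finite branch points into the wild ramification at infinity one at a time.

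The main obstacle is that a single composition with a $Q$ as above only reduces the number of finite branch points without eliminating them entirely, since a polynomial cannot send a finite point to~$\infty$. Each iteration must therefore combine polynomial absorbings with a suitable M\"obius step, and the cumulative ramification at $\infty$ must be controlled so as not to introduce new finite branch points at subsequent stages. The technical heart of the Katz--Zapponi argument is this careful bookkeeping of ramification behaviour at each step, which is precisely the content of \cite[Lemma~16]{Katz_Belyi} and \cite[Theorem~11]{Zapponi} and which I would follow rather than attempt to reproduce.
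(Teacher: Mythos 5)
First, note that the paper does not prove Theorem~\ref{thm:katz_theorem_on_belyi_maps} at all: it is imported from the literature with a \(\qed\), the ``proof'' being the citations to Katz \cite[Lemma~16]{Katz_Belyi} and Zapponi \cite[Theorem~11]{Zapponi}. Your text ultimately does the same thing --- the last paragraph explicitly defers ``the technical heart'' to those references --- so as a self-contained proof it is incomplete by its own admission. That would be fine if the sketched strategy were clearly completable, but it is not.

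The genuine gap is in your second stage. If \(\psi:\P^1\rightarrow\P^1\) is any non-constant rational map and \(y\in C\) is a ramification point of \(f_1\), then by multiplicativity of ramification indices \(y\) is still a ramification point of \(\psi\circ f_1\); hence \(\Br(\psi\circ f_1)\supseteq\psi\bigl(\Br(f_1)\bigr)\cup\Br(\psi)\). Post-composition can therefore never \emph{remove} a finite branch point unless \(\psi\) sends it to \(\infty\). Your polynomials \(Q(t)=t+R(t^p)\) with \(Q'=1\) have \(\Br(Q)=\{\infty\}\), which is good, but being polynomials they map \(\AA^1\) to \(\AA^1\): the interpolation step collapses all finite branch points to the single point \(0\), and no further composition with a polynomial can eliminate that last finite branch point. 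Your proposed fix --- alternating with M\"obius maps --- does not obviously help: any M\"obius map \(\mu\) with \(\mu(0)=\infty\) sends \(\infty\) (which lies in \(\Br(Q\circ f_1)\), since \(Q\) is wildly ramified there) to a finite point, so the number of finite branch points need not decrease, no decreasing complexity measure is exhibited, and the naive alternation can cycle (e.g.\ \(t\mapsto 1/(t^p-t)\) followed by \(u\mapsto 1/u\) returns you to a polynomial). Escaping this trap requires a genuinely new construction --- either rational maps \'etale over \(\AA^1\) whose ramification is entirely wild and concentrated at prescribed finite poles, or (as in Katz/Zapponi) building the function on \(C\) directly, starting from a function with poles at a single point and controlling the zeros of its differential. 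That is the actual content of the cited results, not ``bookkeeping,'' and it is exactly the part your argument omits. (Two smaller points: your initial \(f_0\) must be chosen separable, which is possible since \(K\) is perfect and \(K(C)/K\) is separably generated; and the word ``tamely'' in ``tamely unramified everywhere on \(\AA^1\)'' should just be ``unramified'' --- the ramification of \(Q\) at \(\infty\) is wild.)
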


\begin{proposition}
	\label{prop:mapexists}
	Let $K$ be any field.
	Let $S \subset \P^1(\overline{K})$ be a finite set and $C$ a
smooth, projective, geometrically irreducible curve over~$K$.
If $S$ does not contain $\P^1(K)$, then
there exists a non-constant morphism
$\phi : C \rightarrow \P^1_K$ (over $K$) that is
unramified above~$S$.

[Note that the hypothesis of $S$ not containing
$\P^1(K)$ is automatically satisfied if $K$ is infinite.]
\end{proposition}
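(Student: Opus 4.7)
The plan is to split the argument according to whether $K$ is finite or infinite, which is natural because, as the bracketed remark in the statement indicates, the hypothesis on $S$ only has content when $K$ is finite.

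When $K$ is infinite, I would pick any non-constant morphism $\phi_0 : C \to \P^1_K$ (for instance, the morphism associated to any $f \in K(C) \setminus K$, which exists since $K(C)$ has transcendence degree $1$ over $K$). Let $B \subset \P^1(\overline{K})$ be its (finite, $\Gal(\overline{K}/K)$-stable) geometric branch locus, and, after replacing $S$ by the union of the $\Gal(\overline{K}/K)$-orbits of its points---still finite since every point of $S$ is algebraic over $K$---I may assume $S$ is also $\Gal(\overline{K}/K)$-stable. The key observation is that for each $(b,s) \in B \times S$, the locus $Z_{b,s} = \{\alpha \in \mathrm{PGL}_2 : \alpha(b) = s\}$ is a coset of the $2$-dimensional stabilizer of $b$ and hence has codimension $1$ in the $3$-dimensional variety $\mathrm{PGL}_2$. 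The finite union $Z = \bigcup_{(b,s)} Z_{b,s}$ is thus a proper closed subvariety of $\mathrm{PGL}_2$ and, by $\Gal(\overline{K}/K)$-stability of $B$ and $S$, descends to a variety over $K$. Since $\mathrm{PGL}_2$ is a rational $K$-variety and $K$ is infinite, $\mathrm{PGL}_2(K)$ is Zariski dense, so I can pick $\alpha \in \mathrm{PGL}_2(K) \setminus Z$; then $\alpha \circ \phi_0$ has branch locus $\alpha(B)$ disjoint from $S$.

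When $K$ is finite, it is automatically perfect of positive characteristic, so I would invoke the Wild $p$-Belyi theorem (Theorem~\ref{thm:katz_theorem_on_belyi_maps}) to obtain a non-constant morphism $\phi : C \to \P^1_K$ unramified above $\AA^1$, whose branch locus therefore lies in $\{\infty\}$. By hypothesis there is an $x_0 \in \P^1(K) \setminus S$, and since $\mathrm{PGL}_2(K)$ acts transitively on $\P^1(K)$ I can pick $\alpha \in \mathrm{PGL}_2(K)$ with $\alpha(\infty) = x_0$; the branch locus of $\alpha \circ \phi$ is then contained in $\{x_0\}$, which is disjoint from $S$.

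The main technical step is the dimension/descent check in the infinite case: verifying that each $Z_{b,s}$ has codimension $1$ and that $Z$ descends to a proper subvariety over $K$. Once those are in hand, Zariski density of $\mathrm{PGL}_2(K)$ finishes the infinite case, while Katz's theorem combined with the transitivity of $\mathrm{PGL}_2(K)$ on $\P^1(K)$ finishes the finite case.
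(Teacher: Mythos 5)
Your finite-field case is exactly the paper's argument for perfect $K$ (Katz's wild $p$-Belyi theorem followed by a fractional linear map sending $\infty$ to a rational point outside $S$), and the $\mathrm{PGL}_2$-density argument in your infinite case is a legitimate alternative to the paper's explicit normalization. However, there is a genuine gap at the very first step of the infinite case: a morphism ``associated to any $f\in K(C)\setminus K$'' need not have finite branch locus. The proposition covers infinite fields of positive characteristic (e.g.\ $K=\overline{\FF}_p$ or $\FF_p(u)$), and if $f$ happens to be a $p$-th power in $K(C)$, the induced map $C\to\P^1_K$ is inseparable, hence \'etale at no point, so its branch locus is all of $\P^1$; then $B$ is infinite, the sets $Z_{b,s}$ cover $\mathrm{PGL}_2$, and no post-composition with an automorphism can repair the situation. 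You must choose $\phi_0$ \emph{separable}, and this is precisely where the smoothness of $C$ enters: since $C$ is smooth it is geometrically reduced, so $K(C)$ is a finite \emph{separable} extension of a purely transcendental $K(t)$ (this is \cite[Proposition~3.2.15]{Liu}, the same step the paper uses to produce its starting morphism), and only then is $\Br(\phi_0)$ finite, by \cite[Corollary~4.4.12]{Liu}. With that repair your infinite-case argument goes through; in characteristic $0$ the issue of course does not arise.

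Two smaller points. The descent of $Z$ to a $K$-variety is slightly delicate when $K$ is non-perfect ($\Gal(\overline{K}/K)$-invariance gives descent to the perfect closure, not to $K$), but you do not actually need it: for infinite $K$ the set $\mathrm{PGL}_2(K)$ is Zariski dense in $\mathrm{PGL}_{2,\overline{K}}$, so it already meets the complement of the proper closed subset $Z$, which is all you use. Compared with the paper, your density argument replaces the paper's concrete two-step move (an inversion about a $K$-rational unbranched point to push the branch locus into $\AA^1$, then a translation by an element of $K$ avoiding finitely many differences); once a separable $\phi_0$ is in hand, the two routes are of essentially the same depth, yours being slightly slicker and the paper's more elementary and explicit.
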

\begin{proof}
	We give a proof in the case where $K$ is infinite and a proof
	in the case where $K$ is perfect. Together, these
	two proofs cover all cases.
	
	\textit{If $K$ is infinite.}
	Let $K(C)$ be the field of functions of $C$, so
	$\overline{K}\cap K(C)=K$.
	Since $C$ is smooth, it is geometrically reduced.
	As the transcendence degree of $K(C)$ is one,
	it then follows from \cite[Proposition~3.2.15]{Liu} that $K(C)$ is a
	finite separable extension of a purely transcendental extension $K(t)$ of~$K$.
	Hence there exists a separable finite morphism $f=t:C\rightarrow \P^1_K$.
	The set $\Br(f)$ is finite by \cite[Corollary~4.4.12]{Liu}.

	Write $K= \mathbf{A}^1(K)\subset \P^1(K)$ and
	let $s$ be an element in
	$K\setminus \Br(f)$,
	which exists since $K$ is infinite. We define a map
	$\eta=(x\mapsto 1/(x-s))\circ f$. It follows that $\Br(\eta)$ does not
	contain $\infty$.
	The set $\{y-x: x\in\Br(\eta), y\in K\cap S\}$
	is finite,
	so there exists an element $s'\in K$ that does not belong to it.
	The map $\phi=(x\mapsto x+s')\circ\eta$ suffices.

\textit{If $K$ is perfect.}
By Theorem \ref{thm:katz_theorem_on_belyi_maps} there exists a morphism
$f:C\rightarrow \P^{1}$ over $K$ with $\Br(f)=\{\infty\}$.
Take $s\in \P^{1}(K)\setminus S$.
There exists a fractional linear map $\alpha:\P^{1}\rightarrow\P^{1}$
over $K$ which satisfies $\alpha(\infty)=s$.
We define $\phi=\alpha\circ f$ and check that it satisfies the claim. 
\end{proof}

\begin{question}
In Proposition~\ref{prop:mapexists} we have assumed that the set $S$ is disjoint from $\P^{1}(K)$. 
In our situation this is enough for the applications, but it would be interesting to know in general whether one could drop this assumption.
We leave it here as an open question to the reader.
\end{question}

\begin{proof}[Proof of Theorem~\ref{thm:smallexamplesuffices}]
	Let $S\subset \P^1(\overline{K})$ be the set of points
	such that $E$ has bad reduction at the corresponding
	place.
	By assumption, the set $S$ does not contain $\P^1(K)$,
	so by Proposition~\ref{prop:mapexists}
	there is a morphism $\phi: C\rightarrow \P^1_K$
	that is unramified above~$S$.
	Let $E'$ (respectively~$P'$) be the base change of $E$
	(respectively $P$) to $F=K(C)$
	via~$\phi$.
	Then (1) and~(2) are clearly true,
	and (3) follows from Proposition~\ref{prop:transferexample}.
\end{proof}

In Theorem \ref{thm:smallexamplesuffices} and Proposition~\ref{prop:transferexample},
we do a change of base curve~$C$, but we do not allow a change of the base field $K$ of the base curve.
Indeed, the
following example shows that the results are false for inseparable changes of base field~$K$.
\begin{example}\label{ex:inseparable_example}
	Let $K = \FF_3(s)$, $F = K(t)$, $E : y^2 = x^3 + t^6 x + s^2$, and $P=(0,s)$. 
	The discriminant of $E$ is $-t^{18}$, hence $E$ is minimal and of good reduction
	at all places
	except $t=0,\infty$.
	At $t=0$, the model is minimal and of reduction type $Z_1$ in Szydlo's tables
	\cite[Table 4]{Szydlo_reduction_types}.
	% DIRECT PROOF OF MINIMALITY TO KEEP IN COMMENTS:
	%We claim that $E$ is also minimal at $t=0$.
	%Proof of the claim: take a minimal model~$E'$. Completing the square,
	%we get $a_1=a_3=0$. Using $j=0$, we get $a_2=0$,
	%so $E : y^2 = x^3 + a_4x+a_6$.
	%By \cite[Proof of A.1.2(b), case II]{Silverman_arithmetic}, we get
	%$u^4a_4 = t^6$ and $u^6a_6 = r^3 + t^6r + s^2$
	%for some $u$ and~$r$. If $E$ is non-minimal, then $v(u)>0$.
	%In that case, we get $r^3 \equiv s^2 \pmod{t}$, hence $s^2$ is a cube in~$K$.
	%Contradiction, hence $E$ is minimal at $t=0$.
	At $t=\infty$, we have the model $Y^2 = X^3 + t^{-2} X + t^{-12}s^2$,
	which is minimal because it has discriminant $-t^{-6}$ of valuation~$6$.
	% DETAILS TO KEEP IN COMMENTS:
	% X = t^-4 x,     Y = t^-6 y.
	
	We get that $P$ is integral, so $\dm{1}=0$, which has no primitive valuation.
	Now take $r = -\sqrt[3]{s}\in\overline{K}$, let $K' = K(r) = \FF_3(r)$,
	and let $F'=K'(t)\supset F$.
	Take $x' = t^{-2}(x+r^2)$ and $y' = t^{-3}y$,
	so $E' : y^{\prime2} = x^{\prime3} + t^2{x'}-r^2$ is a model over~$F'$,
	hence $E$ is not minimal at $t=0$ over~$F'$.
	In fact, the model $E'$ is minimal and of reduction type $Z_1$ over~$F'$.
	
	Over $F'$, the resulting point $P'$ satisfies $x'(P') = r^2/t^2$, so $\dmpr{1}=(t)$, hence
	this term has a primitive valuation~$t=0$.
	We get $m(P,v) > 1 = m(P',v')$.
\end{example}

\subsection{General characteristic examples}
In the case of ordinary $E$ with characteristic $\not=2,3$ and $j(E)\not=0$,
we will see in Theorem \ref{thm:constj}
that every term has a primitive
valuation, except possibly $\dm{1}$ and $\dm{2}$.
The following examples show that sometimes these
two remaining terms
do not have a primitive valuation.

\begin{lemma}\label{ex:ordsharp1}
Let $K$ be a field with $p:=\charac(K)\neq 2$. Let $j\in K$ be an element,
such that if $p=3$, then $j=0$.
Then there exists an elliptic curve $E$ with $j(E)=j$ defined over
the function field $K(t)$ of $\P^{1}_{K}$ and a point $P\in E(K(t))$
of infinite order such that
\begin{enumerate}
	\item $(E,P)$ is non-constant,
	\item $E$ has at least one rational place of good reduction,
	\item \label{eq:the first assertion} $D_{P}=D_{2P}=0$,
	\item if $E$ is supersingular, then $\dm{p}$ and $\dm{2p}$
	have primitive valuations.
\end{enumerate}
\end{lemma}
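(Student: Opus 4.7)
The plan is to exhibit, for each admissible $(K, j)$, a single pair $(E, P)$ over $F = K(t)$ satisfying (1)--(4) simultaneously, in the spirit of Lemma~\ref{lem:D_3pP_is_primitive_for_j_0}. My common template is to choose $P = (x_1(t), y_1(t))$ with polynomial coordinates of matching degrees $\deg x_1 = 2m$, $\deg y_1 = 3m$, and to read off the Weierstrass equation of $E$ from the identity $y_1^2 = f(x_1, t)$. For $j = 0$ and $\charac(K) \neq 2, 3$ this gives $E : y^2 = x^3 + A(t)$ with $A = y_1^2 - x_1^3$; the sample $(x_1, y_1) = (t, t^2)$ yields $A = t^3(t-1)$. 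For $j = 1728$ and $\charac(K) \neq 2, 3$ I would similarly take $E : y^2 = x^3 + A(t)\, x$, e.g.\ $A = t^2(t-1)$ with the same $P = (t, t^2)$. For $j = 0$ in characteristic $3$ the model $y^2 = x^3 + A$ is singular, so I would instead use $E : y^2 = x^3 + tx - t^3$ with $P = (t, t)$. For generic $j$ in characteristic $\neq 2, 3$ the same philosophy applies once one solves $j(E) = j$ for the remaining Weierstrass coefficients in terms of $(x_1, y_1)$.

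The verification of (3) drives the degree matching. At every finite place $v$, polynomiality of $x_1, y_1$ already gives $v(D_P) = 0$, and the further structural choice $y_1 = x_1^2$ makes the duplication formula collapse to a polynomial in $t$: indeed $(9 x_1^4 - 8 x_1 y_1^2)/(4 y_1^2) = (9 - 8 x_1)/4 \in K[t]$, so $v(D_{2P}) = 0$ at every finite place as well. At the place $v_{\infty}$ I would substitute $x = t^{2m} X$, $y = t^{3m} Y$ to pass to a $v_{\infty}$-minimal Weierstrass model, and then check that both $X(P)$ and $X(2P)$ have non-negative $s$-valuation ($s = 1/t$); the degree matching $\deg x_1 = 2m$, $\deg y_1 = 3m$ is exactly what makes the pole orders cancel.

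For (1), non-constancy is checked against the constant curve of the same $j$: when $j = 0$ it amounts to $A \notin K^{*} \cdot F^{*6}$, which is immediate as soon as some valuation of $A$ is not divisible by $6$ (e.g.\ $v_0(t^3(t-1)) = 3$), with exponent $4$ for $j = 1728$ and $2$ for generic $j$. For (2), the explicit factorisation of the discriminant provides good reduction at all but finitely many $t_0 \in \P^1(K)$. Infinite order of $P$ follows from the standard specialisation theorem when $K$ is infinite; when $K$ is finite I would instead invoke the canonical height on the elliptic surface together with Lemma~\ref{lem:j0}\eqref{it:j0final}, which forces $v(D_{p^k P})$ to grow unboundedly in $k$ at any additive place where $v(A) > 0$, ruling out torsion.

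The main obstacle is (4). One must arrange the single pair $(E, P)$ so that its additive reduction profile carries two distinct places producing primitive valuations for $D_{pP}$ and $D_{2pP}$ respectively: a place $v_1$ with component-group order $d_{v_1} = 1$ and the $p$-jump yielding $m(P, v_1) = p$, and a second place $v_2$ with $d_{v_2} = 2$ and the $p$-jump yielding $m(P, v_2) = 2p$. The quantitative input is the multiplication-by-$p$ formula of Lemma~\ref{lem:j0}\eqref{it:j0final}, applied in the spirit of Lemma~\ref{lem:D_3pP_is_primitive_for_j_0} to compute $v_i(x(ipP))$ directly and verify that this valuation is strictly negative while $v_i(x(nP)) \geq 0$ for $1 \leq n < ip$. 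The delicate balance here is that the very polynomial choices used to secure (3) also fix the locus and Kodaira type of additive reduction of $E$, so one must confirm that this locus contains exactly the two desired types of additive place; this coordination is what forces the explicit case-by-case construction sketched in the first paragraph.
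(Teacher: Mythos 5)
Your template is only actually carried out for $j=0$ (and, in outline, $j=1728$), but the lemma must produce a pair for \emph{every} admissible $j\in K$, and in part (4) the relevant $j$-invariants run through the whole supersingular locus in $\FF_{p^2}$ --- for $p\equiv 1\bmod 3$ the value $j=0$ is even excluded, being ordinary. Your two key tools do not extend beyond $j=0$: the collapse of the duplication formula under $y_1=x_1^2$ uses that the $x$-coefficient of the Weierstrass equation vanishes, so it gives $D_{2P}=0$ only for $j=0$, and the multiplication-by-$p$ formula you invoke both for (4) and for non-torsion over finite $K$, namely Lemma~\ref{lem:j0}\eqref{it:j0final}, holds only for $j=0$, $p>3$, $p\equiv 2\bmod 3$. (Your $j=0$, $p\equiv 2\bmod 3$ sample $y^2=x^3+t^3(t-1)$, $P=(t,t^2)$ does work, but that is the only case you have.) The paper avoids the case distinction entirely: it twists a constant curve $y^2=x^3+ax+b$ quadratically by $r=t^3+at+b$, takes $E:y^2=x^3+r^2ax+r^3b$ with the tautological point $P=(rt,r^2)$, for which $x(2P)=\tfrac14(3t^2+a)^2-2rt$ is a polynomial for every $j$; part (4) then follows from the general fact that on a supersingular curve over $\FF_{p^2}$ one has $[p]=\psi\circ\Frob_{p^2}$ with $\psi$ an isomorphism, transported through the comparison of $(D_{nP})_n$ with $(D_{nP'})_n$. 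Non-torsion comes from the $I_0^*$ fibres via Shioda--Sch\"utt; your appeal to specialisation fails for instance over $K=\overline{\FF}_p$ (every specialisation is torsion), and your finite-field fallback again uses the $j=0$ formula, so ordinary $j$ over finite $K$ is not covered.

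More seriously, your explicit characteristic-$3$ sample fails item (4). For $E:y^2=x^3+tx-t^3$, $P=(t,t)$, the only additive places are $t=0$ and $t=\infty$, with $v(\Delta)=3$ and $9$ (types $\mathrm{III}$ and $\mathrm{III}^*$, component groups of order $2$); at both places $P$ reduces to the singular point $(0,0)$ of $y^2=x^3$, so $d_v=2$ and, by Lemma~\ref{lem:additive}\eqref{it:additive1}, $m(P,v)\in\{2,6\}$, in fact $6$ since $D_{2P}=0$. At good-reduction places $m(P,v)=m(P',v')$ by Lemma~\ref{lem:constj}, and no $v'$ has rank of apparition $3$ because $[3]$ is purely inseparable on the supersingular constant curve (Theorem~\ref{thm:constE}). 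Hence no valuation has rank of apparition $3$, and $D_{3P}$ has no primitive valuation. This is exactly the design constraint your sketch glosses over: to force $m(P,v)=p$ you must build in an additive place where $P$ meets the identity component ($d_v=1$) while $D_P=0$; the paper's twist achieves this at $t=\infty$, where $X(P)\equiv 1$ on the reduced fibre, whereas your degree choices in characteristic $3$ put $P$ on the singular point at every additive place.
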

\begin{proof}
	If $p=3$, take $a=1$, $b=0\in K$.
	Otherwise, let
 	$a,b\in K$ be such that
	\begin{equation}\label{eq:charnot23const}
	\cstE : y^2 = x^3 + ax + b
	\end{equation}	
	defines an elliptic curve over $K$ with $j(\cstE)=j$. 
	Let $r = t^3 + at+b\in K[t]$, which is square-free as the discriminant of
	$\cstE$ is non-zero.
	Let
	\begin{equation}
	E : y^2 = x^3 + r^2ax + r^3b,
	\end{equation}
	so $j(E) = j(\cstE) =j$.
	We find a point $P = (rt, r^2)\in E(F)$.
	Note that the given Weierstrass equation
	is minimal at all primes of $K[t]$,
	and that the point $P$ is integral at all such primes.
	Moreover, the curve $E$ has places of additive reduction of type $I_{0}^{*}$
	hence by \cite[Corollary~7.5]{Shioda_Schutt} the point $P$
	(which does not have order $1$ or $2$)
	has infinite order.
	
	The point $P'$ of Notation~\ref{not:Fprime} is
	$P' = (t, \sqrt{r})\in \cstE(K(t,\sqrt{r}))$,
	which is non-constant.
	By Lemma~\ref{lem:constconst} this proves~(1).

	Note that $K[t]$ has at most three
	primes at which $E$ has bad reduction (the roots of $r$)
	and for all fields $K$ except $\FF_2$ and $\FF_3$
	there are more than $3$
	rational points in $\AA^1(K)$, hence there is
	at least one rational place of good reduction.
	For $K=\FF_3$, our choice of $r$ has only one rational root,
	hence there are two rational affine places of good reduction.
	This proves~(2).
	
	We also find the following Weierstrass equation, which
	is minimal for the place at infinity of $K[t]$:
	\begin{equation}
	E : Y^2 = X^3 + t^{-8}r^2aX + t^{-12}r^3b,\qquad X = t^{-4}x,\ \  Y = t^{-6}y.
	\end{equation}
	Then $X(P) = t^{-4}rt$, so $P$ is also integral at that place.
	We find that $P$ is an integral point, so $\dm{1}=0$.
	
	The duplication formula gives
	\begin{equation}
	x(2P) = \frac{1}{4} (3t^2+a)^2 - 2rt,
	\end{equation}
	which is integral at all finite places of $K[t]$.
	We also get
	\begin{equation}
	X(2P) = \frac{1}{4} (3+at^{-2})^2 - 2rt^{-3},
	\end{equation}
	which is integral at infinity.
	We find that $2P$ is an integral point, so $\dm{2}=0$.
	This proves~\eqref{eq:the first assertion}.
	
	Now suppose that $E$ is supersingular.
	Then $j\in\FF_{p^2}$, so we take $a,b\in \FF_{p^2}$ from the beginning.
	If $p=3$, then we moreover have $j=0$ and we take $a=1$, $b=0$.
	It remains only to prove that $\dm{mp}$
	does have primitive valuations for $m=1$, $2$.
	
	We have $P' = (t, \sqrt{r})\in\cstE(K(t,\sqrt{r}))$ and
	the valuation $\infty$ that appears in $\dmpr{1}$ with multiplicity
	$1$ does not appear in $\dm{1}$.
	
	Next, we claim $[p] = \psi\circ \mathrm{Frob}_{p^2}$ on $\widetilde{E}$
	with $\psi : (x,y)\mapsto (u^2x,u^3y)$
	for some $u\in K^*$.
	If $p>3$, then the claim follows from \cite[Corollary II.2.12]{Silverman_arithmetic},
    which applies as $\FF_{p^2}$ is perfect and $a,b\in \FF_{p^2}$.
	In case $p=3$, we have $\cstE : y^2 = x^3 + x$ over $\FF_3$ 
	and a direct calculation proves $[3] = \psi\circ \mathrm{Frob}_{9}$
	with $u=-1$, cf. \cite[Theorem 2.3.1(b) in the Second Edition]{Silverman_arithmetic}.
	
	We conclude that the valuation $\infty$ appears with multiplicity $p^2$ in $\dmpr{p}$,
	hence appears in $\dm{p}$
	with multiplicity
	$p^2 - v_\infty(t^{-4}r)\geq p^2-8-v_{\infty}(r)> -v_{\infty}(r)>0$,
	hence $m(\infty) = p$.
	
	The valuations $v$ at the roots of $r$, which appear in $\dmpr{2}$ do not appear in $\dm{2}$.
	They also do not appear in $\dmpr{p}$
	(otherwise by the strong divisibility property \eqref{eq:strongdivproperty}
	and $\gcd(2,p)=1$ they would appear in $\dmpr{1}=0$),
	hence they do not appear in $\dm{p}$ either.
	They do appear with multiplicity $p^2$ in $\dmpr{2p}$, hence appear in $\dm{2p}$
	with multiplicity at least $p^2-v(r) =p^2-1>0$, thus $m(v)=2p$.
\end{proof}

\begin{example}[Ordinary]\label{ex:ordsharp2}
	In Lemma~\ref{ex:ordsharp1}, take $K = \FF_5$, $a=b=1$,
	so $j(E)=1$.
	We obtain
	\begin{align*}
	\dm{1} &= \dm{2}=1, &
	\dm{3} &= (t + 3) \cdot (t + 4) \cdot (t^{2} + 3 t + 4),
	\\
	\dm{4} &= (t^{3} + 2 t^{2} + 4 t + 4) \cdot (t^{3} + 3 t^{2} + 4), &
	\dm{5} &= (t^{2} + 2 t + 4)^{5} \cdot \infty^{ 2 }
	\end{align*}
	where $\dm{1}$ and $\dm{2}$ are trivial, as we already saw
	in Lemma~\ref{ex:ordsharp1}.
\end{example}
\subsection{Examples in characteristic 3}
\begin{example}[Ordinary]\label{ex:char_3_j_ord}
	Let $K$ be a field of characteristic $3$ and let $j\in K^*$.
	We consider the elliptic curve
	\[E_{0}: y^2= x^3+j^2 x^2+2 j^5 \]
	with $j$-invariant~$j$.
	We consider the quadratic twist $E_{0}^{(d)}$ of the curve $E_{0}$ over $K(t)$ where $d= t^3+j^2 t^2+2j^5 $. 
	The curve $E_{0}^{(d)}: y^2=x^3+j^2 d x^2+2 j^5 d^3$ is non-constant and has $j$-invariant $j$ and discriminant $j^{11}d^6$.
%   SAGEMATH VERIFICATION TO KEEP IN LATEX COMMENTS:
%	sage: P.<j,d> = GF(3)[]
%	sage: E = EllipticCurve([0,j^2*d,0,0,2*j^5*d^3])
%	sage: E.j_invariant()
%	j
%	sage: E.discriminant()
%	j^11*d^6

%		This model is minimal at all finite places.
%		Indeed, given any minimal model, we can complete the square $y^2$
%		so $a_1=a_3=0$, and then $j\not=0$ implies $a_2\not=0$,
%		so we can complete the square $x^2$ to get $a_4\not=0$,
%		so that the minimal model is of the form $y^2 = x^3 + a_2x^2 + a_6$
%		with $a_2a_6\not=0$.
%		Such a model is then defined up to scaling $(x,y)\mapsto (u^2x,u^3y)$,
%		hence is minimal if and only if $v(a_2)<2$ or $v(a_6)<6$.
%		In this example, the polynomial
%		$d$ is separable because $E_0$ is an elliptic curve,
%		hence $v(a_2) = v(d) \leq 1<2$,
%		so that the model is minimal.
%		The minimal model at the infinite place is similarly computed.

	This is a generic fibre of a Kummer K3 surface with
	places of bad reduction only at the roots of $d=0$ and at $t=\infty$,
	all of type $I_{0}^{*}$
	(by e.g.~\cite[Table~4]{Szydlo_reduction_types}).
	% DETAILS TO KEEP IN LATEX COMMENTS
	% a1=a3=0, v(ai) >= i/2, v(delta) = 6 is enough, and delta is computed above.
	% d is separable since E0 is an elliptic curve, so v(d) = 1 for the finite places, good.
	% for the infinite place, v(d) = 3, so we change models:
	% y^2 = x^3 + j^2 t^-4 d x^2 + 2 j^5 t^-12 d^3
	% a1=a3=a4=0, v(a2)=1, v(a6)=3, v(delta) = 24-6*3=6. Good.
	On the curve $E_{0}^{(d)}$ we have a point $P=(t\cdot d, d^2)$ of height $1$ (hence non-torsion cf. \cite{Shioda_Mordell_Weil}) which satisfies the condition $D_{P}=D_{2P}=1$, since $x(2P)=t^4 + 2 j^5 t + j^7$. 
\end{example}

\begin{example}[Supersingular]\label{ex:char33}
	Let $K$ be a field of characteristic $3$.
	We consider the curve
    \[E_{t}: y^2=x^3+t^3x+t^4\]
    over $K(t)$,
    which has a point $P=(0,t^2)$.
    The discriminant of the equation $E_{t}$ is $2t^9$,
    hence there is no place of bad reduction away from $0,\infty$.
    By \cite[\S 5 and Table 4]{Szydlo_reduction_types} the reduction type at $t=0$ is $IV^{*}$ and at $t=\infty$ is $III$ and our model is minimal at all places.
    By Shioda's height formula \cite[Theorem 8.6]{Shioda_Mordell_Weil} the point $P$ has height $1/6$ hence is non-torsion.
    A direct computation of the divisors $D_{nP}$ reveals
	\[\dm{1} = \dm{2}=\dm{3}=1,\quad
	\dm{4} = t + 2,\quad
	\dm{5} = t^{2} + t + 2,\quad
	\dm{6} = \infty^{ 2 },\quad
	% \dm{7} &= (t + 1) \cdot (t^{3} + t^{2} + t + 2) \\
	% \dm{8} &= (t + 2) \cdot (t^{4} + 2 t^{3} + t + 1) \\
	\dm{9} = t^{6},\quad
	\dm{27} = t^{60}.
	% \dm{28} &= (t + 1) \cdot (t + 2) \cdot (t^{3} + 2 t + 2) \cdot (t^{3} + t^{2} + 2) \cdot (t^{3} + t^{2} + t + 2) \cdot (t^{3} + t^{2} + 2 t + 1) \cdot (t^{3} + 2 t^{2} + 2 t + 2) \cdot (t^{6} + 2 t^{3} + t^{2} + 1) \cdot (t^{6} + 2 t^{4} + 2 t^{3} + t^{2} + t + 1) \cdot (t^{6} + t^{5} + 2 t^{3} + t^{2} + t + 1) \cdot (t^{6} + t^{5} + 2 t^{3} + t^{2} + 2 t + 1) \cdot (t^{6} + t^{5} + t^{4} + 2 t^{3} + 2 t^{2} + 1) \cdot (t^{6} + t^{5} + 2 t^{4} + 2 t^{3} + 1) \cdot (t^{6} + 2 t^{5} + t^{4} + t^{3} + 2 t + 1) \cdot (t^{6} + 2 t^{5} + t^{4} + t^{3} + 2 t^{2} + 1) \\
	\]
\end{example}
\begin{remark}
	Here is how we came up with the curve and point in Example~\ref{ex:char33}.
	We wanted a pair $(E,P)$ such that $j(E)=0$,
	$\charac K=3$, and $P$ is a point of infinite order such that
	$D_{P}=1$, $D_{3P}=1$, and $D_{9P}$ has a primitive valuation~$v$.
	Such an elliptic curve $E$ has a Weierstrass model $y^2=x^3+Ax+B$
	with $A,B\in K(C)$.
	We look for a valuation $v$ of bad additive reduction for $E$
	such that the group of components has order~$3$,
	that is,
	reduction of type $IV$ or $IV^{*}$ at~$v$
	(see the proof of Lemma~\ref{lem:additive}).
	Moreover, the point $P$ should intersect a non-trivial component
	at $v$ and the point $3P$ should intersect the component of the
	zero section but should not be zero itself.
	Automatically, by additive reduction in characteristic~$3$,
	the point $9P$ then hits the zero section at~$v$.
	
	From \cite{Oguiso_Shioda} it follows that there are only two possible structures for the N\'{e}ron-Severi group of a rational elliptic surface $\mathcal{E}\rightarrow\mathbf{P}^{1}$ over an algebraically closed field of any characteristic which admit a primitive embedding of the lattice $E_{6}$ (which corresponds to the reduction type $IV^{*})$, namely $U\oplus E_{6}\oplus A_{1}\oplus \langle 1/6\rangle$ (type $49$) and $U\oplus E_{6}\oplus A_{2}^{*}$ (type $27$). Over the complex numbers both types of the N\'{e}ron-Severi group exist,
	cf.~\cite{Perrson_Configurations}.
	
	An example of such an elliptic surface with type $49$ over an algebraically closed field of characteristic $3$ was constructed in \cite[4.2.18, case 6A, 5.]{JLRRSP}.
	The generic fibre over $\mathbf{F}_3(t)$ of that surface is $E_{49, t}$
	where for $s\in \mathbf{F}_3(t)$, we define
	$$E_{49, s} : y^2=x^3+s^3(s+2) x+s^4(s^2+s+1).$$
    It has reduction of type $III$ at $t=-2$,
	reduction type $IV^{*}$ at $t=0$ and no other singular fibres. 

	 It is easy to verify that $E_{t}$ from Example~\ref{ex:char33}
	 is isomorphic over $\mathbf{F}_{3}(t)$ to $E_{49,\frac{t}{2+t}}$.
\end{remark}

\subsection{Examples in characteristic 2}

\begin{example}[Supersingular]\label{ex:char2_j_0_order_8}
	We consider a rational elliptic surface with Weierstrass equation:
	\[y^2 + ty = x^3 + t^2x\]
	over $K(t)$ for any field $K$ of characteristic~$2$.
	We have that $j(E)=0$ so the curve is supersingular. The equation above has discriminant $t^4$, hence there is no bad reduction away from $0,\infty$.
	It has bad additive reduction at $t=0$ (type $IV$) and at $t=\infty$ (type $I_{1}^{*}$)
	over $K(t)$
	by the extended Tate algorithm in~\cite{Szydlo_reduction_types} (Table 5 for $t=0$ and Table 7 for $t=\infty$ with the model $y^2+t^2y=x^3+tx^2$).
	From the Oguiso-Shioda classification \cite{Oguiso_Shioda}
	it follows that the rank of the group
	$E(\overline{K}(t))$
	is $1$
	and the group is freely generated by a point of height $1/12$.
	We checked that the point $P = (t,0)$ satisfies this condition.
	
	It is easy to verify that the divisors $D_{P}$, $D_{2P}$, $D_{3P}$ and $D_{4P}$ are trivial and $D_{6P}$ is supported at $t=0$ and $D_{8P}$ is supported at $t=\infty$. More precisely,
	\begin{align*}
	\dm{1} &= \dm{2}=\dm{3}=\dm{4}=1,&
	\dm{5} &= t + 1, \\
	\dm{6} &= t,&
	\dm{7} &= t^{2} + t + 1,\\
	\dm{8} &= \infty^{ 2 },&
    \dm{9} &= t^{3} + t^{2} + 1,\\
    \dm{10} &= (t + 1)^{4},&
    \dm{11} &= (t^{5} + t^{4} + t^{3} + t^{2} + 1),\\
    \dm{12} &= t^{5},&
     \dm{13} &= (t^{3} + t + 1) \cdot (t^{4} + t + 1),\\
    \dm{14} &= (t^{2} + t + 1)^{4},&
    \dm{15} &= (t + 1) \cdot (t^{8} + t^{7} + t^{3} + t + 1),\\
    \dm{16} &= \infty^{ 10 }, &
   \dm{17} &= (t^{4} + t^{3} + t^{2} + t + 1) \cdot (t^{8} + t^{7} + t^{6} + t^{5} + t^{4} + t^{3} + 1),\\
    \dm{18} &= t \cdot (t^{3} + t^{2} + 1)^{4},&
    \dm{19} &= (t^{6} + t^{4} + t^{3} + t + 1) \cdot (t^{9} + t^{6} + t^{4} + t^{3} + 1),\\
    \dm{20} &= (t + 1)^{16}.
    \end{align*}
\end{example}

\begin{example}[Supersingular]
	\label{ex:char2_j0_2and4}
 Let $E_{k}: y^2+t^{2k} y = x^3+ t^2 (t+1) x^2+ tx$, $k\geq 1$ be an elliptic curve over $K(t)$ for any field $K$ of characteristic $2$. 
 The curve $E_k$ has discriminant $t^{8k}$ and no bad reduction away from $0,\infty$. 
 We apply the extended Tate algorithm \cite{Szydlo_reduction_types} to places $t=0$ and $t=\infty$. 
 For $t=0$ our model is minimal for each $k$ and of type $III$.
 For $k=1$ the model of $E_{k}$ with $s=1/t$
 $$y^2+s^4 y =x^3+(s+s^2)x^2+s^7x$$
 is minimal at $s=0$ ($t=\infty$) and of reduction type $I_5^{*}$ by the extended Tate algorithm and Table 7,
 cf.~\cite{Szydlo_reduction_types}.
 For $k=2$ the model of $E_{k}$ with $s=1/t$
 $$y^2+s^2y=x^3+(s+s^2)x^2+s^7 x$$
 is minimal at $s=0$ and of type $I_1^{*}$. 

 There exists a point $P=(0,0)$ on $E_{k}$ which is not of order $2$ or $4$,
 hence it is of infinite order on this curve by \cite[Corollary~7.5]{Shioda_Schutt}.
 \begin{itemize}
 \item[(a)] If $k=1$, then $D_{2P}$ and $D_{4P}$ have a primitive valuation. More precisely,
\[ \dm{1} = 1,\quad \dm{2}= t,\quad \dm{3}= (t^2 + t + 1)\cdot (t^3 + t + 1),\quad\dm{4} = t^6 \cdot \infty^2.  \]
\item[(b)]If $k=2$, then $D_{2P}$ and $D_{8P}$ have a primitive valuation. More precisely,
\[ \dm{1} = 1,\quad 
\dm{2} = t^3,\quad
\dm{3} = t^9 + t^8 + 1,\quad
\dm{4} = t^{16},\quad 
\dm{6} = t^3 \cdot (t^9 + t^8 + 1)^4,\quad 
\dm{8} = t^{68} \cdot \infty^2.\]
\end{itemize}
\end{example}

\begin{example}[Ordinary]\label{ex:char_2_j_not_0}
	Let $K$ be a field of characteristic $2$ and $j\in K^{*}$.
	For any $a\in K(t)\setminus K$ we have an elliptic curve
	\[E_{a}: y^2+xy= x^3+(a+\frac{1}{a^2 j})x^2+\frac{1}{j}\]
	with a point $P=(a,0)$.
	Let $a=t$.
	Then $E_{t}$ is a generic fibre of an elliptic K3 surface with bad reduction at $t=0$
	and $t=\infty$.
	If $j$ is a square in $K$, then we have type $I_{4}^{*}$ at $t=0$ and
	otherwise this is type $K_{8}$ according to \cite[\S 5.1, \S 5.2]{Szydlo_reduction_types}.
	In both cases the model
	\[E_{min}: (y')^2 + t x' y' = (x')^3 + (t^3 + \frac{1}{j})(x')^2 + t^6\frac{1}{j}\]
	obtained via a transformation $x=1/t^2 x'$, $y=1/t^3 y'$ is minimal at $t=0$
	(see also \cite[6.12]{Szydlo_phd} with the model obtained from $E_{min}$ by
	mapping $x\mapsto x+t^3$).

	There is a model at $t=\infty$, of the form (with respect to $t=1/s$)
	\[E_{inf}: (y'')^2 + s x'' y'' = (x'')^3 + (\frac{1}{j}s^4 + s) (x'')^2 + \frac{1}{j} s^6.\]
	It is minimal and of type $I_{4}^{*}$ if $j$ is a square in $K$
	and of type $T_3$ if $j$ is not a square in~$K$,
	cf.~\cite{Szydlo_reduction_types} or \cite[6.14]{Szydlo_phd}.
	 The point $(t,0)$ is not a $2$-torsion point,
	 hence it is of infinite order by \cite[Corollary~7.5]{Shioda_Schutt}.
	 The point $P$ in the model $E_{min}$ has the form
	 $P_{min}=(t^3,0)$ and the point $2P_{min}$ on $E_{min}$ satisfies the condition $x(2P_{min})=t^4 + 1/j$, so the points are integral and integral at infinity, hence the divisors $D_{P}$ and $D_{2P}$ have empty support. 
	 % COMPUTER CODE TO KEEP IN LATEX COMMENTS:
	\longcomment{F0<jsq>:=FunctionField(GF(2));
		F<t>:=FunctionField(F0);
		j:=jsq^2;
		a:=t;
		E:=EllipticCurve([1,a+1/(a^2*j),0,0,1/j]);
		Emin:=EllipticCurve([t,t*(t^2+1/jsq),0,t^4/jsq,0]);
		//this is minimal by Dokchitser^2 Tate algorithm (Prop. 2)
		iso:=Isomorphism(E,Emin);
		assert BettiNumber(Emin,2) eq 22; //(hence K3, so transformation for the model at infinity 
		//with x(1/t)*t^4
		P:=E![a,0];
		iso(P)[1];
		iso(2*P)[1];
		Evaluate(iso(P)[1],1/t)*t^4;
		Evaluate(iso(2*P)[1],1/t)*t^4;}
\end{example}

\section{Proof of the main theorems}\label{sec:proof_of_the_main_theorem}

We now have all the ingredients required for proving the
following two main theorems.
\begin{theorem}\label{thm:constj}
\thmconstjcontents
\end{theorem}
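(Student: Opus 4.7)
The plan is to combine the universal bound $d_v \leq 4$ on the order of $P$ in the component group (Lemma~\ref{lem:additive}) with the refinements of Section~\ref{sec:component_groups} and the direct computation of Section~\ref{sec:the_third_term_when_j_0} to obtain the direct assertion, and then to exhibit the required examples over $K(t)$ from Section~\ref{sec:additional_examples}, transporting them to $F$ via Theorem~\ref{thm:smallexamplesuffices}, for the converse.

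For the direct statement I would argue by contraposition. If $D_n$ has no primitive valuation, then Proposition~\ref{prop:hasvaluation} yields a valuation $v$ of $F$ such that $n$ divides the order $d_v$ of $P$ in the component group $E(F_v)/E_0(F_v)$. Since $d_v \leq 4$ by Lemma~\ref{lem:additive}\eqref{it:additive2}, all $n \geq 5$ are immediate and only $n = 3$ and $n = 4$ remain. For $n = 4$: in characteristic $\neq 2$, Lemma~\ref{lem:additive}\eqref{it:consta} gives $d_v \leq 3$; in characteristic $2$, ordinariness forces $j(E) \in K^*$ by Lemma~\ref{lem:j0}, so Proposition~\ref{prop:char24notinS} forbids an element of order~$4$. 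For $n = 3$: if $\charac K \neq 3$ and $j(E) \neq 0$, then Lemma~\ref{lem:additive}\eqref{it:constb} gives $d_v \mid 4$; if $\charac K = 3$, ordinariness again forces $j(E) \neq 0$, and Proposition~\ref{prop:char33notinS} forbids order-$3$ elements. The remaining case, $\charac K \not\in\{2,3\}$ with $j(E) = 0$, is exactly what Proposition~\ref{prop:D_3P_is_primitive_for_j_0} handles. The case $\charac K = 2$ with $j(E) = 0$ cannot occur since such $E$ is supersingular.

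For the converse, fix an ordinary $j \in K$. Examples with $D_1 = D_2 = 0$ (hence no primitive valuation in those terms) are provided by Lemma~\ref{ex:ordsharp1} when $\charac K \not\in\{2,3\}$, by Example~\ref{ex:char_3_j_ord} in characteristic $3$ (where ordinariness forces $j \neq 0$, so Lemma~\ref{ex:ordsharp1} does not apply), and by Example~\ref{ex:char_2_j_not_0} in characteristic $2$. For the second family of examples---where every $D_n$ has a primitive valuation---the direct part of the theorem already guarantees primitivity for $n \geq 3$, so it remains to arrange that both $D_1$ and $D_2$ acquire primitive valuations. This can be achieved by a mild modification of the above constructions (for instance, by choosing a Weierstrass model together with a point whose coordinate functions acquire poles at two prescribed valuations of $K(t)$, so that $P$ is non-integral at the first and $2P$ at the second). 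These $K(t)$-examples are transported to the given $F = K(C)$ through Theorem~\ref{thm:smallexamplesuffices}, which preserves both the $j$-invariant and the rank of apparition at every place.

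The most delicate step is the subcase $n = 3$, $j(E) = 0$, $\charac K \not\in\{2,3\}$ of the direct statement: here the component-group analysis yields only $d_v \leq 3$, which is insufficient to force primitivity from Proposition~\ref{prop:hasvaluation} alone. This is where the explicit triplication-formula computation of Proposition~\ref{prop:D_3P_is_primitive_for_j_0} is essential, exploiting the particular shape $y^2 = x^3 + A$ to produce, for any non-constant pair, a valuation at which $x(3P)$ has negative valuation while $x(P)$ does not.
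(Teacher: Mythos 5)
Your treatment of the first assertion and of the ``$D_1$ and $D_2$ without primitive valuations'' half of the converse coincides with the paper's proof: reduce to $n=3,4$ via Proposition~\ref{prop:hasvaluation} and the bound $d_v\leq 4$ of Lemma~\ref{lem:additive}\eqref{it:additive2}, kill $d_v=4$ by Lemma~\ref{lem:additive}\eqref{it:consta} resp.\ Proposition~\ref{prop:char24notinS}, kill $d_v=3$ by Lemma~\ref{lem:additive}\eqref{it:constb} resp.\ Proposition~\ref{prop:char33notinS}, handle $j=0$ in characteristic $\neq 2,3$ by Proposition~\ref{prop:D_3P_is_primitive_for_j_0}, and then use Lemma~\ref{ex:ordsharp1}, Example~\ref{ex:char_3_j_ord}, Example~\ref{ex:char_2_j_not_0} transported by Theorem~\ref{thm:smallexamplesuffices}. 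All of that is fine.

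The gap is in the other half of the converse, the existence for every ordinary $j\in K$ of a pair for which \emph{all} terms $D_n$, $n\geq 1$, have a primitive valuation. You dispose of $n\geq 3$ correctly via the first assertion, but for $D_1$ and $D_2$ you only gesture at ``a mild modification of the above constructions'': choose $P$ non-integral at one place and $2P$ non-integral at another. As stated this does not suffice: if $2P$ is non-integral at the second place but $P$ is also non-integral there, that place lies in the support of $D_1$ and is not primitive in $D_2$; what you actually need is a place where $P$ is integral (for the minimal model) while $2P$ is not, i.e.\ where $P$ reduces to a nontrivial point killed by $2$, and you must arrange this simultaneously with the prescribed $j$-invariant, non-constancy of $(E,P)$, infinite order, and (for the transfer via Theorem~\ref{thm:smallexamplesuffices}) a rational place of good reduction, in every characteristic including $2$ and $3$. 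None of this is carried out. The paper sidesteps the whole issue with a one-line trick: start from \emph{any} non-constant pair $(E,Q)$ with $j(E)=j$ and $Q$ of infinite order (for instance the pairs you already built for the other half) and set $P=3Q$. Then $D_{nP}=D_{3nQ}$, and by the first assertion $D_{3nQ}$ has a valuation $v$ with $m(Q,v)=3n$; by the strong divisibility property \eqref{eq:strongdivproperty} this gives $m(P,v)=n$, so every term of the new sequence has a primitive valuation. Replacing your sketch by this argument (or by an actual construction meeting the corrected local conditions) closes the gap.
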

\begin{proof}
	For the first assertion, by Theorem~\ref{thm:constjweak},
	it suffices to prove that $D_{3P}$ and $D_{4P}$ each have a primitive
	valuation.
	Let $p$ be the characteristic of~$K$.
	
	\textit{Proof that $D_{3P}$ has a primitive valuation.}
	Recall that $E$ is ordinary.
  By Proposition~\ref{prop:hasvaluation},
in order to show that $D_{3P}$ has a primitive valuation,
it suffices to show that for every valuation $v$
of~$F$, the order $d_v$ of $P$ in the
component group $E(F_v)/E_0(F_v)$ is not~$3$.

If $j(E)\not=0$ and $p\not=3$, then
Lemma~\ref{lem:additive}\eqref{it:constb} gives $d_v\not=3$.
If $j(E)\not=0$ and $p=3$, then
Proposition~\ref{prop:char33notinS}
gives $d_v\not=3$.

	If $j(E)=0$, then $p\not=2$, $3$ by Lemma~\ref{lem:j0}\eqref{it:j02},
	so in that case
	$D_{3P}$ has a primitive valuation by Proposition~\ref{prop:D_3P_is_primitive_for_j_0}.
	
		\textit{Proof that $D_{4P}$ has a primitive valuation.}
		Again by Proposition~\ref{prop:hasvaluation} it suffices
		to prove that for every valuation $v\in F$, we have $d_v\not=4$.
		If $p\not=2$, then this is Lemma~\ref{lem:additive}\eqref{it:consta}.
		If $p=2$, then this is Proposition~\ref{prop:char24notinS}.
	This proves the first assertion.

    Examples $(E,P)$ where the terms $\dm{1}$ and $\dm{2}$
    also have a primitive valuation are trivial to find:
    just start from an arbitrary pair $(E,Q)$ and take $P=3Q$.
    
    It remains to find examples $(E,P)$ for every field~$F=K(C)$
    and every ordinary $j\in K$
    where the terms $\dm{1}$ and $\dm{2}$ do not have
    primitive
    valuations.
    
    By Theorem~\ref{thm:smallexamplesuffices},
    it suffices to find such examples $(E,P)$ for each rational function field
    $F=K(t)$, where $K$ ranges over
    all fields,
    such that $E$ has good reduction at at least
    one place of degree one in $\P^1(K)$.
    
    For $K$ of characteristic not $2$ or $3$,
    and any ordinary $j$-invariant $j\in K$, 
    Lemma~\ref{ex:ordsharp1}
    does the trick. Note that the example has at most three
    affine places of bad reduction and there are more than $3$
    rational affine places in $\P^1(K)$, hence at least one rational place of good reduction.
    We obtain $\dm{1}=\dm{2}=0$, hence no primitive valuations.
    
    Suppose that $K$ has characteristic $2$ or $3$ and that $j\in K$
    is an ordinary $j$-invariant. Then $j\not=0$, so $j\in K^*$.
    For $K$ of characteristic~$3$, we have Example~\ref{ex:char_3_j_ord}
    for any $j\in K^*$.
    Then $d(0)\in K^*$, hence $E$
    has good reduction at the affine rational place $t=0$.
    We obtain $\dm{1}=\dm{2}=0$, hence no primitive valuations.
    % COMPUTER CODE TO KEEP IN COMMENTS:
    \longcomment{
    K:=GF(3);
    F<t>:=FunctionField(K);
    j := 1;
    d := t^3+j^2*t^2+2*j^5;
    E:=EllipticCurve([0,j^2*d,0,0,2*j^5*d^3]);
    LocalInformation(E);                                                         
    [ <(t^3 + t^2 + 2), 6, 2, 4, I0*, true>, <(1/t), 6, 2, 1, I0*, true> ]
    Factorization(Numerator(Discriminant(E)));
    [
    <t^3 + t^2 + 2, 6>
    ]
    j:=2;
    d := t^3+j^2*t^2+2*j^5;
    E:=EllipticCurve([0,j^2*d,0,0,2*j^5*d^3]);
    LocalInformation(E);
    [ <(t^2 + 2*t + 2), 6, 2, 4, I0*, true>, <(t + 2), 6, 2, 2, I0*, true>, <(1/t), 
    6, 2, 2, I0*, true> ]
}

    For $K$ of characteristic $2$, we have Example~\ref{ex:char_2_j_not_0}
    for any $j\in K^*$.
It has good reduction at $t=1$.
We obtain $\dm{1}=\dm{2}=0$, hence no primitive valuations.
\end{proof}

\renewcommand{\yesentry}[1]{\!\!\!\begin{array}{c}\yesmark\\ #1\vphantom{,}\end{array}\!\!\!}
\renewcommand{\noentry}[1]{\!\begin{array}{c}\nomark\\ #1\vphantom{,}\end{array}\!}
\renewcommand{\bothentry}[2]{\!\!\!\begin{array}{c}\bothmark\\ #1, #2\end{array}\!\!\!}

\begin{table}[htb]
	\maintablecontents
	\caption{Table referred to in Theorem~\ref{thm:supersingularsharp}
		and its proof}
	\label{tab:table2}
\end{table}
\begin{theorem}\label{thm:supersingularsharp}
	\theoremsupersingularsharpcontents{\ref{tab:table2}}
\end{theorem}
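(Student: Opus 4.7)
The plan is to read off each cell of the four sub-tables of Table~\ref{tab:table2} from the material already developed, splitting supersingular $E$ by characteristic and by whether $j(E)=0$, and invoking the explicit examples of Section~\ref{sec:additional_examples} (together with Theorem~\ref{thm:smallexamplesuffices}) to realise the `$\bothmark$' entries over an arbitrary base curve.  The two workhorse results are Proposition~\ref{prop:hasvaluation} (if $D_n$ has no primitive valuation then $n\mid d_v$ for some~$v$) and Proposition~\ref{prop:hasnovaluation} (if $D_{np}$ has a primitive valuation then $n\mid d_v$ for some~$v$), combined with the bounds $d_v\leq 4$ (Lemma~\ref{lem:additive}\eqref{it:additive2}), $d_v\leq 3$ when $p\neq 2$ (Lemma~\ref{lem:additive}\eqref{it:consta}), and $d_v\leq 2$ when $p\notin\{2,3\}$ and $j(E)\neq 0$ (Lemma~\ref{lem:additive}\eqref{it:constc}).

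For the `\yesmark' and `\nomark' cells I would proceed row by row.  When $p=2$, supersingularity forces $j(E)=0$ (Lemma~\ref{lem:j0}) and only $d_v\leq 4$ is available, so the `odd $n>4$' and `even $n>8$' cells follow directly from Theorem~\ref{thm:supersingular}.  When $p=3$ we again have $j(E)=0$ but now $d_v\leq 3$; for $n>3$ with $3\nmid n$ one has $n\geq 4>d_v$, and for $n>9$ with $3\mid n$ one writes $n=3m$ with $m\geq 4>d_v$, so Propositions~\ref{prop:hasvaluation}--\ref{prop:hasnovaluation} conclude.  When $p\equiv 1\pmod 3$ with $p\geq 5$, Lemma~\ref{lem:j0}\eqref{it:j01} forces $j(E)\neq 0$ in the supersingular case, so $d_v\leq 2$; this rules out $n\mid d_v$ already at $n=3$ and, writing $n=mp$, at $m=3$, yielding all the stated `\yesmark' and `\nomark' cells of that row.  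For $p\equiv 2\pmod 3$, $p\neq 2$, one splits on $j(E)$: if $j\neq 0$ then $d_v\leq 2$ and the same arguments apply; if $j=0$ then $d_v\leq 3$, which still suffices for every `\yesmark'/`\nomark' cell in that row \emph{except} $n=3$ and $n=3p$.

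The cell $n=3$ when $p\equiv 2\pmod 3$ and $j(E)=0$ is the one place where the $d_v$-bookkeeping is too weak, since $d_v=3$ can genuinely occur; this gap is closed by Proposition~\ref{prop:D_3P_is_primitive_for_j_0}, which gives a primitive valuation in $D_{3P}$ for \emph{every} non-constant $(E,P)$ with $j(E)=0$ in characteristic~$\neq 2,3$.  Conversely, the genuine `$\bothmark$' in the $n=3p$ cell for $p\equiv 2\pmod 3$ reflects that for $j\neq 0$ one has $3\nmid d_v\leq 2$ and hence no primitive valuation, whereas for $j=0$ Lemma~\ref{lem:D_3pP_is_primitive_for_j_0} exhibits an $(E,P)$ for which $D_{3p}$ \emph{does} have a primitive valuation.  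The remaining `$\bothmark$' cells are certified by pairing an example that produces a primitive valuation with one that does not: Lemma~\ref{ex:ordsharp1} for the small-$n$ columns $\{1,2,p,2p\}$ across every characteristic (its items give no primitive valuation at $D_1,D_2$ and a primitive valuation at $D_p,D_{2p}$ in the supersingular case), Example~\ref{ex:char33} for the $n\in\{3,6,9\}$ cells in characteristic~$3$, and Examples~\ref{ex:char2_j0_2and4} and~\ref{ex:char2_j_0_order_8} for the $n\in\{3,6,8\}$ cells in characteristic~$2$; primitive-valuation-producing examples for the trivial cells $n=1,2$ are obtained by starting from any non-constant $(E,Q)$ and replacing $P$ by a suitable small multiple of~$Q$ as in the proof of Theorem~\ref{thm:constj}.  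Theorem~\ref{thm:smallexamplesuffices} then lifts each example from $\P^1$ to the given function field.  I expect the main difficulty to be purely organisational: tracking which bound on $d_v$ is in force for each $(p,j(E))$ stratum and verifying that for each `$\bothmark$' cell there is indeed an example of each kind, so that the table cannot be sharpened.
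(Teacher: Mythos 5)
Your handling of the unstarred cells and of the ``primitive valuation exists'' halves of the starred cells is essentially the paper's: Propositions~\ref{prop:hasvaluation} and~\ref{prop:hasnovaluation} together with the $d_v$-bounds of Lemma~\ref{lem:additive}, Proposition~\ref{prop:D_3P_is_primitive_for_j_0} for $n=3$ when $j=0$, Lemma~\ref{lem:D_3pP_is_primitive_for_j_0} for $n=3p$, and the examples of Section~\ref{sec:additional_examples} transported by Theorem~\ref{thm:smallexamplesuffices}. The genuine gap is on the ``no primitive valuation'' halves of the starred cells at multiples of $p$. You offer no construction at all for the cells $D_p$ and $D_{2p}$ when $p\geq 5$, for $D_6$ and $D_9$ when $p=3$, or for $D_6$ and $D_8$ when $p=2$, and the $d_v$-bookkeeping cannot supply them: Proposition~\ref{prop:hasnovaluation} applied to $D_p$ would need $1\nmid d_v$, which never holds, and in characteristic $2$ with $j=0$ the orders $d_v=3,4$ genuinely occur (Example~\ref{ex:char2_j_0_order_8}, where $D_6$ and $D_8$ \emph{do} have primitive valuations), so no universal ``no'' can be extracted there --- an explicit pair must be exhibited. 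Moreover your ``no'' argument for the $n=3p$ cell with $p\equiv 2\pmod 3$, via a supersingular curve with $j\neq 0$, collapses for $p=5$: the unique supersingular $j$-invariant in characteristic $5$ is $0$, so the required curve does not exist (and for other $p$ one must still check that a nonzero supersingular $j$-invariant lies in $K$ and that a suitable non-constant pair of infinite order with that $j$ can be produced).

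The paper fills every one of these cells with a single device that is missing from your proposal (item $B$ of its proof): starting from an arbitrary non-constant pair $(E,Q)$ with $Q$ of infinite order, it takes $P=5Q$, so that $D_{nP}=D_{5nQ}$, and then applies the already-established last two columns (equivalently Theorem~\ref{thm:supersingular}) to $(E,Q)$ to see that the relevant terms of the $P$-sequence acquire no valuations of rank of apparition exactly $n$; the same trick also furnishes the ``yes'' examples at $n=1,2$ that you obtain by your ``small multiple'' remark. Until you add this step (or replace it by concrete examples, e.g.\ a pair whose additive fibres all have component groups of exponent dividing $2$, so that Proposition~\ref{prop:hasnovaluation} applies), roughly half of the starred entries in the columns $n\in\{p,2p,3p,4p\}$ remain unproved. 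A smaller inaccuracy: Lemma~\ref{ex:ordsharp1} assumes characteristic $\neq 2$, so it cannot be cited ``across every characteristic''; in characteristic $2$ the columns $n\in\{1,2,4\}$ must instead be covered by Examples~\ref{ex:char2_j_0_order_8} and~\ref{ex:char2_j0_2and4}, as the paper does.
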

\begin{proof}
	For each entry, the letter(s) below it
	refer(s) to one or more of the proofs listed below.
	In case of~$\bothmark$, the letters before the comma refer
	to examples where the term has a primitive
	valuation, and the letters after the comma to examples where it does not.
	If multiple letters are given, then each separately gives
	a complete proof.
	
	By Proposition~\ref{prop:hasvaluation},
	in order to
    prove that $\dn$ has a primitive valuation for $p\nmid n$,
	it suffices to prove for every additive valuation $v$ of $F$
	that $n$ does not divide the order $d_v$
	of $P$ in the component group $E(F_v)/E_0(F_v)$.
	
	By Proposition~\ref{prop:hasnovaluation},
	in order to prove that $\dm{np}$ has no primitive valuation,
	it also suffices to prove for every additive valuation $v$ of $F$
	that $n$ does not divide the order $d_v$
	of $P$ in the component group $E(F_v)/E_0(F_v)$.
	\begin{enumerate}[$A$.]
		\item Lemma~\ref{lem:additive}\eqref{it:additive2} states $d_v\leq 4$,
		and if $p\not=2$, then Lemma~\ref{lem:additive}\eqref{it:consta}
		states $d_v\not=4$.
		\item Take any pair $(E, Q)$ with $Q\in E(F)$ of infinite order.
		Then for $P=5Q$, the pair $(E,P)$ is such an example.
		To see this, apply the result in the final two columns
		(or Theorem~\ref{thm:supersingular}) to $(E,Q)$.
		\item Here $p>3$. If $j(E)\not=0$, then $d_v\not=3$ by
		        Lemma~\ref{lem:additive}\eqref{it:constb}.
		        If $j(E)=0$, then $\dm{3}$ has a primitive valuation
		        by Proposition~\ref{prop:D_3P_is_primitive_for_j_0}.
		\item Here $p\equiv 1~\mathrm{mod}~3$, so $j(E)\not=0$
		        by Lemma~\ref{lem:j0}\eqref{it:j01}.
		        But then $d_v\not=3$ by
		        Lemma~\ref{lem:additive}\eqref{it:constb}.
	    \item \label{it:D3pP} This is Lemma~\ref{lem:D_3pP_is_primitive_for_j_0}.
	\end{enumerate}
    To prove the cases with
    $\bothmark$, by Theorem~\ref{thm:smallexamplesuffices},
it suffices to find examples $(E,P)$ for each rational function field
$F=K(t)$ (over every field $K$ of the appropriate characteristic)
such that $E$ has good reduction at at least
one place of degree one in $\P^1(K)$.
The following are such examples.
\begin{enumerate}[$A$.]
	\setcounter{enumi}{5}
		\item \label{it:ordsharp1} Lemma~\ref{ex:ordsharp1}
		         gives examples for all characteristics
		         $p\geq 3$ where $\dm{1}$ and $\dm{2}$ do not have
		         primitive valuations, and $\dm{p}$ and $\dm{2p}$
		         do. They have good reduction at at least one place.
		\item \label{it:char2_j0_2and4}
		In Example~\ref{ex:char2_j0_2and4}(a)
		the terms $\dm{2}$ and $\dm{4}$ have primitive valuations.
		In Example~\ref{ex:char2_j0_2and4}(b)
		the terms $\dm{2}$ and $\dm{8}$ have primitive valuations.
		These examples are supersingular
		over $\FF_2(t)$ and have good reduction at $t=1$.
		\item \label{it:char2j08} 
		Example~\ref{ex:char2_j_0_order_8}
		gives a supersingular elliptic curve and point
		in characteristic~$2$, where $\dn$ has a primitive
		valuation for $n=6$ and $n=8$, but not for
		$n\leq 4$.
		It has good reduction at the rational place $t=1$.		
		% COMPUTER CODE TO KEEP IN COMMENTS:
  		\longcomment{
			> K:=GF(2);
			> P<t>:=FunctionField(K);
			> E:=EllipticCurve([0,0,t,t^2,t+1]);                             
			> jInvariant(E);
			0
			> LocalInformation(E);
			[ <(t), 4, 2, 3, IV, true>, <(1/t), 8, 3, 4, I1*, true> ]
		}
	\item\label{it:char33}
	Example~\ref{ex:char33} gives a supersingular elliptic curve
	and point in characteristic~$3$ such that
	$\dn$ has a primitive valuation for $n=6$ and $n=9$,
	but not for $n\leq 3$.
	It has good reduction at the rational place $t=1$.\qedhere
	% COMPUTER CODE TO KEEP IN COMMENTS:
	\longcomment{> K:=GF(3);
		> P<t>:=FunctionField(K);
		> E:=EllipticCurve([0,0,0,t^3,t^4]);
		> jInvariant(E);
		0
		> LocalInformation(E);
		[ <(t), 9, 3, 3, IV*, true>, <(1/t), 3, 2, 2, III, true> ]
	}
	\end{enumerate}
\end{proof}

\end{document}